\documentclass[twoside,12pt]{article}
\usepackage{amsmath,amssymb,amsthm,amsfonts}
\usepackage{paralist}
\usepackage{subfigure}
\usepackage[title]{appendix}
 
\usepackage{graphics} 
\usepackage{epsfig} 
\usepackage{graphicx}
\usepackage{caption}
\usepackage{epstopdf}
\usepackage[colorlinks=true]{hyperref}
\usepackage{float}
\usepackage{pdfpages}
\usepackage{amsfonts}
\usepackage{amsmath,amsthm}
\usepackage{multirow}
\usepackage{cancel}
\usepackage{amsmath}
\usepackage{mathrsfs}
\usepackage{txfonts}
\usepackage{enumerate}
\usepackage[numbers,sort&compress]{natbib}
\usepackage[font=scriptsize,labelfont=bf,width=0.95\textwidth]{caption}

\numberwithin{equation}{section}
\newcommand{\T}{\mathbb T}
\newcommand{\R}{\mathbb R}
\newcommand{\A}{\mathcal A}
\newcommand{\J}{\mathcal J}
\newcommand{\K}{\mathcal K}

\newcommand{\qc}{q_c}
\newcommand{\gp}{\gamma'}

\newcommand{\be}{\begin{equation}}
\newcommand{\ee}{\end{equation}}

\newcommand{\ben}{\begin{eqnarray*}}
\newcommand{\enn}{\end{eqnarray*}}

\newtheorem{definition}{Definition}
\newtheorem{proposition}{Proposition}[section]
\newtheorem{theorem}{\textbf Theorem}[section]
\newtheorem{lemma}{\textbf Lemma}[section]

 \numberwithin{equation}{section}
\newtheorem{remark}{Remark}[section]
\usepackage{microtype}
\title{Endpoint Maximal Regularity for Superquadratic Hamilton–Jacobi Equations and Applications to Ergodic Mean-field Games Systems}

 \author{ Fanze Kong \thanks{Department of Applied Mathematics, University of Washington, Seattle, WA 98195, USA; {\sf Email: fzkong@uw.edu}} }
\date{}

\begin{document}

\maketitle

 \begin{abstract}
A celebrated conjecture of P.-L. Lions concerns maximal regularity for viscous Hamilton--Jacobi equations. In this paper, we study the endpoint case. We consider normalized strong solutions of 
$$-\Delta u+|Du|^\gamma=f$$ in $\mathbb T^d$, where $d\geq 2$, $\gamma>2$, and $f\in L^{q_c}(\mathbb T^d)$ with $q_c=d(\gamma-1)/\gamma$. At this critical exponent, the main difficulty is possible concentration under the critical scaling. Assuming that the source terms form a uniformly equi-integrable subset of $L^{q_c}$, we rule out this concentration and prove maximal $L^{q_c}$ regularity for strong solutions. The proof combines a two-stage blow-up argument with a Liouville rigidity theorem. More precisely, the second blow-up yields a uniform local \(L^{\gamma q_c}\)-bound for the gradients, while the small drift arising from the first blow-up upgrades weak convergence to strong local compactness, ultimately leading to a contradiction with Liouville rigidity.
Finally, we apply the maximal regularity theory for Hamilton--Jacobi equations at the endpoint case to establish the existence of ergodic solutions to defocusing second-order mean-field games systems with critical coupling exponents.

\end{abstract}

\section{Introduction}

We consider the periodic viscous Hamilton--Jacobi equation
\begin{equation}\label{eq:intro-HJ}
 -\Delta u+|Du|^\gamma=f
 \qquad \text{in }\mathbb T^d,
\end{equation}
where $d\geq 2$, $\gamma>1$ and $f$ is a given source term.   A fundamental question, raised by P.-L.~Lions in a series of seminars on
viscous Hamilton--Jacobi equations, is whether the nonlinear term
$|Du|^\gamma$ inherits the same $L^q$ integrability as the source $f$.  More precisely, Lions conjectured that, whenever
\begin{align}\label{q_c_def}
 q>q_c:=\frac{d(\gamma-1)}{\gamma},
\end{align}
equation \eqref{eq:intro-HJ} enjoys the so-called maximal
$L^q$-regularity
\begin{equation}\label{eq:intro-MR}
 \|D^2u\|_{L^q(\mathbb T^d)}
 +\bigl\||Du|^\gamma\bigr\|_{L^q(\mathbb T^d)}
 \leq C,
\end{equation}
where $C$ depends only on $d$, $\gamma$ and  the source term $f$.  Here, the exponent $q_c$ is the scaling-critical exponent
associated with \eqref{eq:intro-HJ}.  In fact, if $ u_r(x):=r^{\frac{2-\gamma}{\gamma-1}}u(rx)$, then both $-\Delta u$ and $|Du|^\gamma$ have the same homogeneity, while
the $L^{q_c}$ norm of the rescaled right-hand side is invariant, which implies $q=q_c$ is critical from the viewpoint of compactness. 
The maximal $L^q$ estimates are useful, for instance, in providing compactness for the
existence theory of mean-field games systems and for vanishing-viscosity
approximations of first-order Hamilton--Jacobi equations;  see, e.g.,
\cite{CrandallLions1983,LasryLions2007,LionsLecture2009,LionsLecture2012,GomesPimentelSanchezMorgado2015,Munoz2022}.    The study of the regularity and rigidity of (\ref{eq:intro-HJ}) with superquadratic Hamiltonians can be traced to the Bernstein method  developed by Lions \cite{Lions1985}; see also
\cite{PeletierSerrin1978} for related Liouville theorems. 
H\"older regularity was established by Dall'Aglio and
Porretta \cite{DallAglioPorretta2015}; some H\"older estimates for
coercive Hamiltonians were obtained in \cite{CapuzzoDolcettaLeoniPorretta2010}. 

The Lions' conjecture has been confirmed in several important regimes. In the superquadratic Hamiltonians ($\gamma>2$) case, Dall'Aglio and Porretta
\cite{DallAglioPorretta2015} proved H\"older regularity for weak solutions.
Cirant and Verzini \cite{CirantVerzini2022} established local maximal
$L^q$-regularity for $q>q_c$ and $\gamma>2$.  In the periodic setting, Cirant and Goffi
\cite{CirantGoffi2021} obtained global maximal $L^q$ estimates for
$q>q_c$ and $\gamma>2$.  More recently, in a subquadratic regime, Cirant et al.
obtained local $W^{2,p}$ estimates for Hamilton--Jacobi equations by using weighted Morrey-space estimates \cite{CirantKongWeiZeng2025}.  
There are also some results for the maximal regularity of the parabolic Hamilton--Jacobi equations.  For superquadratic Hamiltonians, Cardaliaguet and Silvestre
\cite{CardaliaguetSilvestre2012} established local H\"older estimates for parabolic Hamilton--Jacobi equations with unbounded right-hand sides. Cirant and Goffi obtained Lipschitz estimates \cite{CirantGoffiLipschitz2020}, while
maximal regularity above the critical exponent was established in
\cite{CirantGoffiParabolic2021}. In the superquadratic regime, maximal regularity throughout the full range
strictly above the critical exponent was subsequently established via a
blow-up argument in \cite{Cirant2025}. In particular, the relevant  Liouville results for the parabolic Hamilton--Jacobi equation can be traced to
\cite{PolacikQuittnerSouplet2007}. The endpoint case $q=q_c$, however, is delicate:   as observed in \cite{CirantVerzini2022},
boundedness of the source term in $L^{q_c}$ does not prevent concentration at
arbitrarily small scales.  Additional compactness assumptions are therefore
required to recover maximal regularity at the endpoint.   We remark that the endpoint maximal-regularity problem exhibits distinct
behavior in the subquadratic and superquadratic regimes. Concerning the subquadratic Hamiltonians, Goffi
\cite{Goffi2023} established endpoint maximal regularity under uniform
equi-integrability assumptions on the source term.   The endpoint argument in \cite{Goffi2023} relies on a duality-based
stability estimate in the space
$L^{d(\gamma-1)/(2-\gamma)}$, which is valid only when $\gamma<2$.  Then, in the superquadratic regime, Cirant, Goffi  and Leonori
\cite[Theorem~3.1(ii) and Remark~3.2]{CirantGoffiLeonori2025} established the maximal regularity via the integral Bernstein method  under the
condition $\gamma>\frac{d}{d-2},$
assuming a uniform equi-integrability of
the source terms.   In view of Remark 2.14 in
\cite{CirantGoffiLeonori2025}, the maximal regularity at the endpoint case also holds when $d=2$. Consequently, the range not covered by their framework is $d=3$, $2<\gamma\leq3.$

The purpose of this paper is to develop a unified framework for establishing endpoint maximal \(L^{q_c}\)-regularity for \eqref{eq:intro-HJ} throughout the superquadratic regime \(\gamma>2\). In particular, our result closes the remaining gap mentioned above.  Beyond this extension, our proof develops a different approach based on a two-stage blow-up argument and Liouville rigidity, instead of the integral Bernstein method. 

Noting that $ W^{2,q_c}(\mathbb T^d)
 \hookrightarrow C^{0,\alpha_c}(\mathbb T^d)$, where
\begin{align}\label{alphac}
\alpha_c:= 2-\frac{d}{q_c}
 =
 2-\frac{\gamma}{\gamma-1}
 =
 \frac{\gamma-2}{\gamma-1}
 \in(0,1),
\end{align}
which is insufficient
for maximal $L^q$ regularity since the embedding is not compact  even though the solution belongs to $C^{0,\alpha_c}$.   We assume that the set of admissible source terms in (\ref{eq:intro-HJ}), denoted by
$\mathcal F$, is bounded and uniformly equi-integrable in
$L^{q_c}(\mathbb T^d)$.  More precisely, a set
$\mathcal F\subset L^{q_c}(\mathbb T^d)$ is said to be bounded and uniformly
equi-integrable if $\sup_{f\in\mathcal F}\|f\|_{L^{q_c}(\mathbb T^d)}<\infty$
and its equi-integrability modulus
\begin{align}\label{eq:UInt}
 \omega_{\mathcal F}(s)
 :=
 \sup_{f\in\mathcal F}
 \sup_{\substack{E\subset\mathbb T^d\  \\ |E|\le s}}
 \int_E |f|^{q_c}\,dx\rightarrow 0  \qquad\text{as }s\downarrow0.
\end{align}
Under the above assumptions, we establish the following endpoint maximal regularity result for the Hamilton--Jacobi equation.

\begin{theorem}\label{thm:main}
Let $d\ge2$, $\gamma>2$ and
$q_c=d(\gamma-1)/\gamma$.  Suppose that
$\mathcal F\subset L^{q_c}(\mathbb T^d)$ is bounded and uniformly
equi-integrable. Then, there exists a constant $ C=C\bigl(d,\gamma,
 \sup_{f\in\mathcal F}\|f\|_{L^{q_c}(\mathbb T^d)},
 \omega_{\mathcal F}\bigr)$
such that  for $f\in\mathcal F$, every strong solution
$u\in W^{2,q_c}(\mathbb T^d)$ of \eqref{eq:intro-HJ}, with
  $\int_{\mathbb T^d}u\,dx=0$, 
satisfies
\begin{align}\label{eq:main-estimate}
 \|D^2u\|_{L^{q_c}(\mathbb T^d)}
 +\bigl\||Du|^\gamma\bigr\|_{L^{q_c}(\mathbb T^d)}
 \le C.
\end{align}
\end{theorem}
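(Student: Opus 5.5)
We argue by contradiction via blow-up, following the strategy announced in the abstract. First, a reduction: by the known Calderón--Zygmund estimates on $\mathbb T^d$, it suffices to bound $\||Du|^\gamma\|_{L^{q_c}}$, equivalently $\|Du\|_{L^{\gamma q_c}}$, since then $\|D^2u\|_{L^{q_c}}\le C(\|f\|_{L^{q_c}}+\||Du|^\gamma\|_{L^{q_c}})$. Suppose there are $f_n\in\mathcal F$ and normalized strong solutions $u_n$ with $M_n:=\|Du_n\|_{L^{\gamma q_c}}\to\infty$. We quantify concentration through the scale-invariant local quantity
$$\Theta_n(r):=\sup_{x\in\mathbb T^d}\Bigl(r^{\,d-\gamma q_c}\!\!\int_{B_r(x)}|Du_n|^{\gamma q_c}\Bigr)\quad\text{(note } \gamma q_c=d(\gamma-1)\text{, and the exponent is invariant under } u_r),$$
or more conveniently a Hölder-type seminorm of $u_n$ at the critical exponent $\alpha_c$. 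Since $W^{2,q_c}\hookrightarrow C^{0,\alpha_c}$ and $\alpha_c$ is exactly the scaling exponent of $u_r(x)=r^{(2-\gamma)/(\gamma-1)}u(rx)$, the $\alpha_c$-Hölder seminorm is invariant. Choose points $x_n$ and scales $r_n\to0$ at which a suitable normalized quantity (e.g.\ $\sup_x\mathrm{osc}_{B_r(x)}u_n\, r^{-\alpha_c}$ localized, or a Polá\v{c}ik--Quittner--Souplet doubling point of $|Du_n|^{\gamma-1}$) is nearly maximal, and define the first blow-up $v_n(y):=r_n^{(2-\gamma)/(\gamma-1)}(u_n(x_n+r_ny)-u_n(x_n))$. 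Then $-\Delta v_n+|Dv_n|^\gamma=g_n$ on $B_{1/r_n}$, with $g_n(y)=r_n^{\gamma/(\gamma-1)}f_n(x_n+r_ny)$, and $\|g_n\|_{L^{q_c}(B_R)}^{q_c}=\int_{B_{Rr_n}(x_n)}|f_n|^{q_c}\le\omega_{\mathcal F}(|B_{Rr_n}|)\to0$. This is where equi-integrability enters: the rescaled source vanishes locally in $L^{q_c}$.

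Second, we must prevent the gradients from degenerating in the limit. The first blow-up gives $v_n$ uniformly $C^{0,\alpha_c}$ locally with nontrivial normalized oscillation, but no gradient bound. We perform a second blow-up on $v_n$ using the Hölder regularity of Dall'Aglio--Porretta and the local maximal regularity of Cirant--Verzini at exponents $q>q_c$, now available because the source is small. Concretely, we write $-\Delta v_n=g_n-|Dv_n|^\gamma$ and treat $|Dv_n|^{\gamma-1}$ as a drift $b_n$ in $-\Delta v_n+b_n\cdot Dv_n=g_n$. A local Bernstein/Caccioppoli iteration with small $g_n$ then yields a uniform bound $\|Dv_n\|_{L^{\gamma q_c}(B_R)}\le C(R)$. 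If this failed, a second rescaling at concentration points would produce a solution with vanishing source and normalized gradient mass, which is again excluded by the Liouville step below.

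Third, we pass to the limit. Uniform $W^{2,q_c}_{loc}$ bounds give $v_n\rightharpoonup v$, but weak convergence of $|Dv_n|^\gamma$ is the main obstacle, since the nonlinearity is critical. We upgrade to strong convergence by exploiting the smallness of the drift: on small balls, $\|b_n\|_{L^{d}}=\|Dv_n\|^{\gamma-1}_{L^{d(\gamma-1)}}$ can be made small by the uniform integrability inherited from the second blow-up (no concentration of $|Dv_n|^{\gamma q_c}$). A perturbative $W^{2,q}$ estimate for $-\Delta+b_n\cdot D$ with small $L^d$ drift then gives $W^{2,q}_{loc}$ bounds for some $q>q_c$, hence $Dv_n\to Dv$ strongly in $L^{\gamma q_c}_{loc}$. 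The limit $v$ is a strong solution of $-\Delta v+|Dv|^\gamma=0$ in $\mathbb R^d$ with $|v(y)|\le C|y|^{\alpha_c}$, $\alpha_c<1$. By the Lions/Peletier--Serrin Liouville rigidity (Bernstein gradient bound $|Dv(y)|\le C\,\mathrm{dist}^{-1/(\gamma-1)}$ on balls of radius $R\to\infty$), $Dv\equiv0$. This contradicts the normalization, which passes to the limit by strong convergence; hence the $M_n$ are bounded.

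I expect the hardest step to be the non-concentration of $|Dv_n|^{\gamma q_c}$ needed in the third step. I would first try to obtain it from the second blow-up combined with a doubling lemma, so that concentration at any smaller scale reproduces a nontrivial entire solution with zero source, and that is ruled out by the same Liouville theorem.
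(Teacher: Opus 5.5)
There is a genuine gap, and it is the step you flag as hardest: your outline never shows that the drift is small, yet smallness is the only mechanism for upgrading weak to strong $W^{2,q_c}_{\mathrm{loc}}$ convergence at the critical exponent. Your rescaling $v_n=r_n^{-\alpha_c}\bigl(u_n(x_n+r_n\cdot)-u_n(x_n)\bigr)$ keeps the coefficient $1$ in front of $|Dv_n|^\gamma$. Since $\bigl\||Dv_n|^{\gamma-1}\bigr\|_{L^d(B_\rho(x))}=\bigl(\int_{B_\rho(x)}|Dv_n|^{\gamma q_c}\bigr)^{1/d}$ is scale invariant, a uniform local $L^{\gamma q_c}$ bound (your second step) only makes the drift bounded. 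Smallness on small balls is exactly the non-concentration you leave open. The ``second rescaling at concentration points'' you suggest for it must itself carry the normalized gradient mass to the limit, which again needs strong convergence. This is circular unless the second scale is chosen so that the rescaled drift is small on every unit ball, and you never specify such a choice.

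The substitutes you invoke do not work. $g_n\to0$ in $L^{q_c}$ does not put $g_n$ in any $L^q$ with $q>q_c$, so neither the Cirant--Verzini local maximal regularity for $q>q_c$ nor a $W^{2,q}$ bound, $q>q_c$, for solutions of $-\Delta v_n+b_n\cdot Dv_n=g_n$ is available. Compactness has to come from the small-drift estimate at exponent $q_c$ (Lemma~\ref{lem:small-drift}(i)) applied to the differences $v_n-v_m$. These solve a linear equation with drift $\lambda A_{nm}(Dv_n,Dv_m)$ and right-hand side $g_n-g_m\to0$.

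Two further links are missing. First, with $\|Du_n\|_{L^{\gamma q_c}}\to\infty$ as hypothesis, you never show that a nontrivial normalized oscillation persists at scales $r_n\to0$. The paper gets this from uniform vanishing of the $\alpha_c$-H\"older quotient (Theorem~\ref{thm:uniform-little}), the endpoint interpolation of Lemma~\ref{lem:global-interpolation}, and Calder\'on--Zygmund absorption. Second, a Bernstein gradient bound cannot be applied to a limit that is only $W^{2,q_c}_{\mathrm{loc}}$ without first bootstrapping to $W^{2,s}_{\mathrm{loc}}$, $s>d$, via Lemma~\ref{lem:small-drift}(ii).

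The idea you are missing is the paper's source of the small drift. The first blow-up is taken at the first scale $r_n$ where the H\"older profile of $u_n$ reaches a small value $\varepsilon$, and it is normalized by $\varepsilon r_n^{\alpha_c}$. This gives $-\Delta v_n+\varepsilon^{\gamma-1}|Dv_n|^\gamma=g_n$ with a unit H\"older bound at unit scales. The second blow-up uses radii $\rho_n$ with $\sup_x\mathcal E(v_n;B_{\rho_n}(x))=\eta$ small, which makes its own drift small on unit balls. It yields a universal bound $M_0(d,\gamma)$ for $\mathcal E(v_n;B_2(x))$. Hence the first-level drift has $L^d(B_2(x))$ norm at most $2C_\gamma\varepsilon^{\gamma-1}M_0^{1/d}<\kappa_{q_c}$ once $\varepsilon$ is small. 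The strong limit is then a nonconstant entire solution ($v(0)=0$, $|v(z)|=1$), contradicting Lemma~\ref{lem:endpoint-liouville}.

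Your $\Theta_n$ also carries the wrong weight. Since $\gamma'q_c=d$ and $\gamma q_c=q_c^*$, the quantity $\int_{B_r(x)}(|D^2u|^{q_c}+|Du|^{\gamma q_c})$ is itself invariant, with no power of $r$. With that quantity your plan can be closed in a single blow-up:
\begin{itemize}
\item Choose $r_n$ with $\sup_x\int_{B_{r_n}(x)}(|D^2u_n|^{q_c}+|Du_n|^{\gamma q_c})=\eta$ for a fixed small $\eta$, and center at a near-maximizer; $r_n\to0$ because the total energy diverges.
\item The rescaled drift then has $L^d$ norm $\lesssim\eta^{1/d}$ on every ball of radius $2$.
\item The scale-invariant bound of Theorem~\ref{thm:critical-holder} gives local uniform convergence, and equi-integrability gives $g_n\to0$ in $L^{q_c}_{\mathrm{loc}}$.
\item The difference argument above gives strong $W^{2,q_c}_{\mathrm{loc}}$ convergence.
\item The limit solves $-\Delta v+|Dv|^\gamma=0$ in $\mathbb R^d$, so it is constant by Lemma~\ref{lem:endpoint-liouville}, contradicting $\mathcal E(v;B_1)\ge\eta/2$.
\end{itemize}
This is the paper's second blow-up applied directly to $u_n$. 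That normalization is the heart of the endpoint argument, and it is absent from your proposal.
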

\begin{remark}
Under the assumption that the source terms are uniformly equi-integrable,
Theorem~\ref{thm:main} gives an affirmative answer to the superquadratic
endpoint problem raised by Cirant and Verzini
\cite{CirantVerzini2022}, establishing maximal \(L^{q_c}\)-regularity for
\eqref{eq:intro-HJ}. Compared with
\cite[Theorem~3.1(ii)]{CirantGoffiLeonori2025}, the   new exponent
range is $d=3$, $2<\gamma\leq3$.  Moreover, our proof follows a different approach, based on a two-stage
blow-up argument and Liouville rigidity, and provides a unified proof
for all \(d\geq2\) and \(\gamma>2\).
\end{remark}
The novelty of the proof is a two-stage blow-up argument recover  the compactness at the critical regime of the blow-up sequence under the 
uniformly equi-integrability assumption of the source term, where some partial regularity induced by the small drift is significant.  In the framework of the first blow-up for showing uniformly H\"older vanishing property, we have to establish the compactness of the blow-up sequence. To this end, we perform the second blow-up to obtain the local energy estimate  and use the small-drift regularity result to upgrade the weak compactness to the strong compactness, which ensures the the crucial uniformly H\"older vanishing property of the solution by deriving the contradiction with Liouville rigidity theorem. 
Finally, the endpoint
Gagliardo--Nirenberg inequality and a standard Calder\'on--Zygmund estimate then
complete the maximal-regularity estimate given in Theorem \ref{thm:main}.

Our second set of main results concerns applications of
Theorem~\ref{thm:main}. More precisely,  we apply the maximal-regularity result  to establish the existence of solutions to ergodic defocusing mean-field games systems.
We consider the following ergodic mean-field games systems:
\begin{equation}\label{eq:mfg-system}
 \begin{cases}
 -\Delta u+H(Du)+\lambda=m^{\bar\alpha}+V(x),&\text{ in }\T^d,\\
 -\Delta m-\nabla\cdot\!\left(mD_pH(Du)\right)=0, &\text{ in }\T^d\\
 m\ge0,\quad \displaystyle\int_{\T^d}m\,d x=1,
 \quad \displaystyle\int_{\T^d}u\,d x=0,
 \end{cases}
\end{equation}
where \((u,m,\lambda)\) is the unknown solution triple, \(H(p)=|p|^\gamma\), \(D_pH(p)=\gamma|p|^{\gamma-2}p\) and $\gamma>2$.  Here we suppose that $V\geq 0$ and $V\in C^2(\R^2).$  

The system \eqref{eq:mfg-system} arises in the study of the long-time behavior of mean-field games. Mean-field games theory was introduced independently by Lasry and Lions \cite{LasryLions2007} and by Huang, Malham\'e and Caines \cite{HuangMalhameCaines2006} to characterize Nash equilibria in stochastic differential games with a large population of indistinguishable agents.  When the horizon is infinite, by using the ergodicity, one finds the equilibrium is described by a stationary system coupling a Hamilton--Jacobi
equation with a Fokker--Planck equation.  In this formulation,
$u$ denotes the value function of a representative agent, $m$ is the
population density of agents, and $\lambda$ is the ergodic constant, representing the
optimal average cost per unit time.     Ergodic second-order mean-field games systems have been investigated by
 fixed-point arguments and variational methods; see, among
others,
\cite{GomesPiresSanchezMorgado2012,GomesPatriziVoskanyan2014,MeszarosSilva2015,MeszarosSilva2018}. In particular, the variational study of ergodic mean-field games systems has been developed
systematically in
\cite{MeszarosSilva2015,MeszarosSilva2018,CesaroniCirant2019,
CirantKongWeiZeng2025,KongTongZengZhou2026,KongTongZeng2026,KongTongZeng2026MountainPass}.
We remark that the qualitative properties of solutions, particularly
uniqueness, depend strongly on the monotonicity of the coupling. For a
monotone increasing coupling, corresponding to the defocusing regime, the
Lasry--Lions monotonicity condition typically yields uniqueness. For a monotone decreasing coupling, corresponding to the focusing regime, uniqueness
cannot generally be expected
 and the problems become delicate.
Stationary focusing problems and their mass-subcritical, mass-critical or Sobolev-critical
regimes have been studied in
\cite{CesaroniCirant2019GroundStates,BernardiniCesaroni2023,CirantCosenzaVerzini2024,
CirantKongWeiZeng2025,KongTongZengZhou2026}.

In this paper, we are concerned with the defocusing mean-field games systems.  In detail, we define $\bar\alpha_*=\frac{\gamma'}{d-2-\gamma'}.$  Choosing
 $\alpha=\bar\alpha_*$ in (\ref{eq:mfg-system}), by using the variational approach, Theorem \ref{thm:main} and Sobolev embedding yield precisely $m^\alpha\in L^{q_c}(\mathbb T^d)$, which is the scaling-critical exponent in the endpoint maximal regularity of Hamilton--Jacobi equations (\ref{eq:intro-HJ}).  We emphasize that this endpoint mechanism differs from the mass-critical
phenomenon arising for focusing mean-field games systems. In the latter case, the energy functional is non-convex. In the present
setting, by contrast, the variational functional remains convex, and the
main obstruction arises instead from the loss of compactness at the endpoint
regularity threshold for the Hamilton--Jacobi equation.  To overcome this difficulty, we first construct a minimizer of the variational problem.  The resulting strong compactness of the
densities yields the uniform equi-integrability of the corresponding
coupling terms, which allows us to apply the endpoint maximal-regularity
estimate to the Hamilton--Jacobi equation in \eqref{eq:mfg-system}.
A linearization of the variational problem, combined with convex duality,
then identifies the minimizing flux with the optimal feedback $w=-mD_pH(Du).$
This closes the Fokker--Planck equation in \eqref{eq:mfg-system} and proves the existence of an
ergodic solution to the second-order defocusing mean-field games system at the endpoint case. The relevant results are summarized as follows.
\begin{theorem} 
\label{thm:main2}
Suppose that $V\in C^2(\T^d)$, $d\geq 4$ and $\bar \alpha=\frac{\gp}{d-2-\gp}$, system
\eqref{eq:mfg-system} admits a unique   variational solution  satisfying
Definition~\ref{def:solution}.  More
precisely, there exist $(u,\lambda,m,w)$ satisfying $m^\alpha\in L^{\qc}(\T^d)$, $D_pH(Du)\in L^d(\T^d;\R^d)$, $w\in L^{r_*}(\T^d;\R^d)$ with $r_*:=\frac{d}{d-1-\gp}>1$ and $q_c$ given in Theorem \ref{thm:main}.
\end{theorem}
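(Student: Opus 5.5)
The plan is the convex variational (Benamou--Brenier type) approach, with Theorem~\ref{thm:main} supplying the compactness at the endpoint. Write $\gp=\gamma/(\gamma-1)$ and $L(q)=c_\gamma|q|^{\gp}$ for the Legendre transform of $H$. Define
\[
\mathcal E(m,w)=\int_{\T^d}\Bigl(m\,L\!\left(-\tfrac{w}{m}\right)+\tfrac{1}{\bar\alpha+1}m^{\bar\alpha+1}+Vm\Bigr)\,dx
\]
on the convex set $\mathcal K=\{(m,w): m\ge0,\ \int m=1,\ -\Delta m+\nabla\cdot w=0 \text{ weakly}\}$, with the usual conventions at $m=0$. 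The functional is convex and lower semicontinuous, since $(m,w)\mapsto |w|^{\gp}/m^{\gp-1}$ is jointly convex, the coupling is monotone and $V\ge0$.

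\textbf{Step 1: existence of a minimizer.} For a minimizing sequence, Hölder's inequality gives $\int|w|^{r}\le(\int |w|^{\gp}m^{1-\gp})^{r/\gp}(\int m^{s})^{\cdots}$ for suitable $r<\gp$. Elliptic regularity for $-\Delta m=-\nabla\cdot w$ then gives $m\in W^{1,r}$, and bootstrapping with the Sobolev embedding yields uniform bounds on $w$ in $L^{r_*}$ with $r_*=d/(d-1-\gp)$, and on $m$ in $L^{p}$ with $p=d(\gp-1)\cdots$. The choice $\bar\alpha=\gp/(d-2-\gp)$ (which needs $d\ge4$ so that $d-2-\gp>0$) is exactly the one for which this bootstrap places $m^{\bar\alpha}$ in $L^{\qc}$. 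The delicate point is \emph{strong} compactness: from the $W^{1,r}$ bound we get $m_n\to m$ strongly in $L^{s}$ for every $s$ below the critical Sobolev exponent. Uniform convexity of the coupling term, together with convergence of the energies along a minimizing sequence, should upgrade this to $m_n^{\bar\alpha}\to m^{\bar\alpha}$ strongly in $L^{\qc}$, and strong convergence makes the family equi-integrable in $L^{\qc}$. Uniqueness follows from strict convexity in $m$, which forces uniqueness of $m$ and hence of $w$ on $\{m>0\}$, together with Lasry--Lions monotonicity for $(u,\lambda)$.

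\textbf{Step 2: dual problem and the HJ equation.} I would use Fenchel--Rockafellar duality with the dual functional $\sup_{u,\lambda}\{\lambda-\int\frac{\bar\alpha}{\bar\alpha+1}(-\Delta u+H(Du)+\lambda-V)_+^{(\bar\alpha+1)/\bar\alpha}\}$, or equivalently linearize $\mathcal E$ at the minimizer. Either route gives $\lambda$ and $u$ with $-\Delta u+H(Du)+\lambda=m^{\bar\alpha}+V$. The function $u$ is first obtained as a limit of approximate problems, where $m$ is regularized by truncation or mollification. For these approximations $f_\varepsilon=m_\varepsilon^{\bar\alpha}+V-\lambda_\varepsilon$ form a bounded, uniformly equi-integrable family in $L^{\qc}$ by Step 1. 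Theorem~\ref{thm:main} then bounds $D^2u_\varepsilon$ and $|Du_\varepsilon|^\gamma$ uniformly in $L^{\qc}$. Hence $Du_\varepsilon\to Du$ strongly in $L^{\gamma\qc}$, and since $\gamma\qc=d(\gamma-1)$ we get $D_pH(Du)\in L^{d}$. This lets us pass to the limit and obtain a strong solution.

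\textbf{Step 3: identifying the flux.} Test the HJ equation against $m$ and the constraint against $u$. This requires $m\in L^{d/(d-1)}$-type integrability paired with $Du\in L^{d(\gamma-1)}$; the pairings make sense given the exponents above. One obtains $\int mL(-w/m)+mH(Du)+w\cdot Du=0$. The Fenchel--Young inequality is an equality only when $w=-mD_pH(Du)$ a.e., and substituting this closes the Fokker--Planck equation.

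\textbf{Main obstacle.} The hardest step is Step 1's strong convergence of $m_n^{\bar\alpha}$ in $L^{\qc}$ at the critical exponent, which is what makes the hypothesis of Theorem~\ref{thm:main} applicable. I would try to obtain it from energy convergence plus the uniform convexity of $t\mapsto t^{\bar\alpha+1}$, using a Radon--Riesz argument in $L^{\bar\alpha+1}$ combined with interpolation against the critical Sobolev bound.
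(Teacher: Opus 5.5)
Your overall architecture (convex minimization, regularized Hamilton--Jacobi equation controlled by Theorem~\ref{thm:main}, linearization and Fenchel equality) matches the paper. There is, however, a genuine gap at the step everything else depends on: proving $m^{\bar\alpha}\in L^{\qc}(\T^d)$, i.e.\ $m\in L^{d/(d-2-\gp)}$, for the minimizer.

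You assert that bootstrapping the kinetic-energy bound through $\Delta m=\nabla\cdot w$ gives $w\in L^{r_*}$ and this integrability. That bootstrap cannot improve on $L^{\bar\alpha+1}$. If $m\in L^p$, H\"older gives $w\in L^r$ with $1/r=1/\gp+1/(\gamma p)$, and then $m\in W^{1,r}\subset L^{p_1}$ with $1/p_1=1/\gp-1/d+1/(\gamma p)$. This affine iteration has the attracting fixed point $p=d/(d-\gp)=\qc'$, which is \emph{smaller} than $\bar\alpha+1=(d-2)/(d-2-\gp)$. Concretely, from $m\in L^{\bar\alpha+1}$ you get only $w\in L^{(d-2)/(d-1-\gp)}$ and $1/p_1=1/(\bar\alpha+1)+2/(d(d-2))$, which is worse than where you started. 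So neither $w\in L^{r_*}$ nor $m\in L^{\bar\alpha\qc}$ is ever reached, and without $m^{\bar\alpha}\in L^{\qc}$ Theorem~\ref{thm:main} cannot even be invoked.

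The missing ingredient is extra regularity of the minimizer coming from optimality itself. In Lemma~\ref{prop:translation} the paper compares $(m,w)$ with the midpoint of its translates $(m_{\pm h},w_{\pm h})$. It uses translation invariance of the kinetic and coupling terms, a Jensen-deficit lower bound for $t\mapsto t^{\bar\alpha+1}$, and the bound $\frac12\|D^2V\|_{L^\infty}|h|^2$ on the second difference of $V$. This yields $\|\phi(\cdot+h)-\phi(\cdot-h)\|_{L^2}\le C|h|$ for $\phi=m^{(\bar\alpha+1)/2}$. Hence $\phi\in H^1$, and Sobolev gives $m\in L^{(\bar\alpha+1)d/(d-2)}=L^{\bar\alpha\qc}$. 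This, not a Fokker--Planck bootstrap, is what $\bar\alpha$ is calibrated to. The bound $w\in L^{r_*}$ comes only at the very end, from $w=-mD_pH(Du)$ with $D_pH(Du)\in L^d$.

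Your declared main obstacle is also misplaced, and the proposed fix would not work. Convergence of $\int m_n^{\bar\alpha+1}$ plus Radon--Riesz gives $m_n\to m$ in $L^{\bar\alpha+1}$. Hence $m_n^{\bar\alpha}\to m^{\bar\alpha}$ only in $L^{(\bar\alpha+1)/\bar\alpha}=L^{(d-2)/\gp}$, strictly weaker than $L^{\qc}=L^{d/\gp}$. A general minimizing sequence carries no uniform critical Sobolev bound, and interpolating against a merely bounded endpoint norm gives strong convergence only strictly below the endpoint.

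None of this is needed. Once $m^{\bar\alpha}\in L^{\qc}$ is known for the fixed minimizer, you mollify $f=m^{\bar\alpha}+V$, so $f_k\to f$ in $L^{\qc}$. A strongly convergent sequence is automatically uniformly equi-integrable (Lemma~\ref{lem:strong-equi}). With $\lambda_k$ bounded by the minimum-point and integration arguments, Theorem~\ref{thm:main} applies to $f_k-\lambda_k$ (Lemma~\ref{prop:fixed-HJ}).

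Finally, in your Step~3 the Fenchel gap vanishes only if you already know $\K_f(m,w)=\lambda$ for $f=m^{\bar\alpha}+V$. Testing the two equations against each other does not give this. You still need that the minimizer of $\J$ minimizes the linearized functional $\K_f$, and that the linearized problem has no duality gap (Propositions~\ref{prop:linearized-optimality} and~\ref{prop:linear-duality}).
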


The paper is organized as follows.  In Section~\ref{sec:preliminaries} we
collect the preliminaries including Gagliardo--Nirenberg inequality, small-drift compactness of the linearized Hamilton--Jacobi equation and Liouville rigidity result.
Section~\ref{Section3} contains the two-stage blow-up argument for showing the uniformly H\"older  vanishing estimate.  In
Section~\ref{sec:maximal-regularity} we combine this crucial estimate with the
endpoint interpolation Gagliardo--Nirenberg inequality to prove
Theorem~\ref{thm:main}.  Section~\ref{section5} applies Theorem~\ref{thm:main} to establish the existence of solutions to defocusing mean-field games systems with critical coupling exponents.

\section{Preliminary results}\label{sec:preliminaries}
In this section, we introduce the notation and collect several estimates that will be used throughout the paper. Let $\gamma'$ denote the H\"older conjugate exponent of $\gamma$.  Then
\begin{equation}\label{eq:exponents}
 \gamma'=\frac{\gamma}{\gamma-1},
 \qquad
 q_c=\frac{d}{\gamma'}=\frac{d(\gamma-1)}{\gamma}.
\end{equation}
Hence,  $1<q_c<d$ and  $\gamma'q_c=d$.  Moreover, the Sobolev conjugate exponent of $q_c$ is
\begin{equation}\label{eq:critical-identities}
 q_c^*
 :=
 \frac{dq_c}{d-q_c}
 =
 \gamma q_c.
\end{equation}
Let $\mathcal U$ be an arbitrary family of strong solutions of
\eqref{eq:intro-HJ} with zero average, whose corresponding source terms
belong to $\mathcal F$.  Define the H\"{o}lder semi-norm as  
 \[
 [u]_{C^{0,\alpha}(\mathbb T^d)}
 :=
 \sup_{\substack{x,y\in\mathbb T^d\\x\ne y}}
 \frac{|u(x)-u(y)|}
 {\operatorname{dist}(x,y)^\alpha}.
\]

Now, we  give the counterexample of the maximal regularity to (\ref{eq:intro-HJ}) with the absence of the uniformly equi-integrability assumption.
\begin{proposition}
\label{prop:critical-counterexample}
There exist smooth periodic solutions to
\[
 -\Delta u_\varepsilon+|Du_\varepsilon|^\gamma=f_\varepsilon
 \qquad\text{in }\mathbb T^d,
 \qquad
 \int_{\mathbb T^d}u_\varepsilon\,dx=0,
\]
such that
\[
 \sup_{0<\varepsilon<\varepsilon_0}
 \|f_\varepsilon\|_{L^{q_c}(\mathbb T^d)}<\infty,
\]
whereas
\begin{align}\label{divergence}
 \|D^2u_\varepsilon\|_{L^{q_c}(\mathbb T^d)}
 +\bigl\||Du_\varepsilon|^\gamma\bigr\|_{L^{q_c}(\mathbb T^d)}
 \rightarrow\infty
 \qquad\text{as }\varepsilon\downarrow0.
\end{align}
Moreover,    $(f_\varepsilon)_\varepsilon$ is not uniformly
equi-integrable in $L^{q_c}(\mathbb T^d)$.
\end{proposition}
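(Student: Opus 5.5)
We build $u_\varepsilon$ by concentrating a single fixed profile under the critical scaling $u_r(x)=r^{\frac{2-\gamma}{\gamma-1}}u(rx)$ from the introduction, and we \emph{define} $f_\varepsilon$ as the left-hand side of the equation.

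\textbf{Step 1: the profile.} Fix a nonzero radial function $\phi\in C_c^\infty(B_{1})\subset\mathbb R^d$, and set $\beta:=\frac{\gamma-2}{\gamma-1}=\alpha_c\in(0,1)$. For $0<\varepsilon<\varepsilon_0$, with $\varepsilon_0$ small so that $B_\varepsilon$ embeds in $\mathbb T^d$, put
\[
 v_\varepsilon(x):=\varepsilon^{\beta}\phi(x/\varepsilon),\qquad u_\varepsilon:=v_\varepsilon-\int_{\mathbb T^d}v_\varepsilon\,dx .
\]
Each $u_\varepsilon$ is smooth, periodic, and has zero average. Subtracting a constant changes neither $Du_\varepsilon$ nor $D^2u_\varepsilon$.

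\textbf{Step 2: scaling computation.} We have $D u_\varepsilon=\varepsilon^{\beta-1}(D\phi)(x/\varepsilon)$ and $D^2u_\varepsilon=\varepsilon^{\beta-2}(D^2\phi)(x/\varepsilon)$. The key identities are
\[
 \gamma(\beta-1)=-\tfrac{\gamma}{\gamma-1}=\beta-2,\qquad (\beta-2)\,q_c+d=-\tfrac{\gamma}{\gamma-1}\cdot\tfrac{d(\gamma-1)}{\gamma}+d=0 .
\]
Hence
\[
 f_\varepsilon:=-\Delta u_\varepsilon+|Du_\varepsilon|^\gamma=\varepsilon^{\beta-2}F(x/\varepsilon),\qquad F:=-\Delta\phi+|D\phi|^\gamma ,
\]
and a change of variables gives
\[
 \|f_\varepsilon\|_{L^{q_c}}=\|F\|_{L^{q_c}(\mathbb R^d)},\quad \|D^2u_\varepsilon\|_{L^{q_c}}=\|D^2\phi\|_{L^{q_c}},\quad \||Du_\varepsilon|^\gamma\|_{L^{q_c}}=\||D\phi|^\gamma\|_{L^{q_c}} .
\]
All of these quantities are constant in $\varepsilon$. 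This proves the uniform bound on $f_\varepsilon$. It also proves non-equi-integrability: on $E_\varepsilon=B_\varepsilon$ we have $|E_\varepsilon|\to0$, while $\int_{E_\varepsilon}|f_\varepsilon|^{q_c}=\|F\|_{L^{q_c}}^{q_c}>0$. Here $F\not\equiv0$, since otherwise integrating $F$ gives $\int|D\phi|^\gamma=0$, so $\phi\equiv0$.

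\textbf{Step 3: the obstacle.} Pure scaling only yields norms of $D^2u_\varepsilon$ and $|Du_\varepsilon|^\gamma$ that stay bounded and fail to converge. It does not yield the divergence \eqref{divergence}. To obtain divergence, we superpose many profiles at a sequence of scales. We would try a logarithmic amplitude, i.e.\ a multiscale sum with disjoint supports
\[
 v_\varepsilon=\sum_{k=1}^{N(\varepsilon)}\varepsilon_k^{\beta}\,\phi\big((x-x_k)/\varepsilon_k\big),
\]
with $\varepsilon_k\to0$ very fast and $N(\varepsilon)\to\infty$. Then $\|D^2u_\varepsilon\|_{L^{q_c}}^{q_c}\approx N\|D^2\phi\|^{q_c}$, but this diverges together with $\|f_\varepsilon\|$, so it fails too.

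The honest fix uses the nonlinearity: $f$ may cancel between its two terms. We choose the profile $\phi=\phi_\varepsilon$ depending on $\varepsilon$ so that $-\Delta\phi_\varepsilon+|D\phi_\varepsilon|^\gamma$ stays bounded in $L^{q_c}$ while $\||D\phi_\varepsilon|^\gamma\|_{L^{q_c}}\to\infty$. Concretely, take radial $\phi_\varepsilon(r)=\psi(r)$ on an annular region, solving the ODE $-\psi''-\frac{d-1}{r}\psi'+|\psi'|^\gamma=0$. This ODE has the explicit singular solution $\psi'=c_*r^{-1/(\gamma-1)}$, with $c_*^{\gamma-1}=d-1-\frac1{\gamma-1}$. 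That constant is positive for $d\ge 2$ and $\gamma>2$, since $\frac1{\gamma-1}<1$. The solution is exactly critical: $|D\psi|^{\gamma q_c}=c r^{-d}$, whose integral diverges logarithmically. We truncate this solution to $\delta<r<1$ with smooth cutoffs at both ends. The source is then supported only in the two transition layers. Each layer is a rescaled copy of a fixed bump, so by Step 2 its $L^{q_c}$ norm is bounded uniformly in $\delta$. Meanwhile $\||Du|^\gamma\|_{L^{q_c}}^{q_c}\sim|\log\delta|\to\infty$. With $\delta=\varepsilon$, this gives \eqref{divergence}.

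The inner layer of size $\sim\delta$ carries a fixed $L^{q_c}$ mass in a vanishing set, which gives the failure of equi-integrability. We expect the main work to be checking that the cutoff layers produce only uniformly bounded errors. This follows from the scale invariance in Step 2, since each layer is the same profile at its own scale.
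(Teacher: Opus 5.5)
Your proposal is correct, and its final construction in Step~3 is the paper's. Integrating $\psi'=c_*r^{-1/(\gamma-1)}$ gives $\psi=(c_*/\alpha_c)\,r^{\alpha_c}$ with $c_*^{\gamma-1}=d-2+\alpha_c$, which is exactly the paper's profile $U=c\,r^{\alpha_c}$, truncated by an $\varepsilon$-rescaled inner cutoff and a fixed outer cutoff, so the source lives only in the two layers while $\bigl\||Du_\varepsilon|^\gamma\bigr\|_{L^{q_c}}^{q_c}$ grows like $\log(1/\varepsilon)$. Two points need tidying. The outer radius must be below half the period for the construction to live on $\mathbb T^d$ (the paper takes $R<1/8$). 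Also, your Step~2 argument for $F\not\equiv0$ does not cover the inner profile $V=c\,\bar\chi(|y|)|y|^{\alpha_c}$, which is not compactly supported; instead, ODE uniqueness for $w=V'$ (if $F\equiv 0$, then $w'+\frac{d-1}{r}w=|w|^\gamma$ with $w(1)=0$) forces $w\equiv0$, contradicting $w=c\,\alpha_c r^{\alpha_c-1}$ for $r\ge2$.
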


\begin{proof}
Define
\[
 \alpha:=\alpha_c=\frac{\gamma-2}{\gamma-1},
 \qquad
 c:=\frac{(d-2+\alpha)^{1/(\gamma-1)}}{\alpha}.
\]
For $x\in\mathbb R^d\setminus\{0\}$, let $U(x):=cr^\alpha$
with $r:=|x|$, then we have
\[
 |DU|=c\alpha r^{\alpha-1},
 \qquad
 \Delta U=c\alpha(d-2+\alpha)r^{\alpha-2}.
\]
Since  $\alpha-2=-\gamma'$, $ \gamma(\alpha-1)=-\gamma'$,
and $c\alpha(d-2+\alpha)=(c\alpha)^\gamma$, we have
\begin{equation}\label{eq:punctured-profile}
 -\Delta U+|DU|^\gamma=0
 \qquad\text{in }\mathbb R^d\setminus\{0\}.
\end{equation}

Fix $R\in(0,1/8)$ and choose a smooth radial cutoff $\bar\chi$ such that
$\bar\chi=0$ on $[0,1]$ and $\bar\chi=1$ on $[2,\infty)$, and set
\[
 V(y):=c\,\bar\chi(|y|)|y|^\alpha,
 \qquad
 F(y):=-\Delta V(y)+|DV(y)|^\gamma.
\]
Then $V\in C^\infty(\mathbb R^d)$, $V=0$ on $B_1$, and
$V(y)=c|y|^\alpha$ for $|y|\ge2$.  Moreover, $F\not\equiv0$.  For $0<\varepsilon<R/4$, define the regularized profile
\[
 v_\varepsilon(x)
 :=
 \varepsilon^\alpha V(x/\varepsilon)
\]
 and set $\widetilde u_\varepsilon(x)
 :=
 \chi(x)v_\varepsilon(x).$
After periodic extension to $\mathbb T^d$, define the normalized function
\[
 u_\varepsilon
 :=
 \widetilde u_\varepsilon
 -\fint_{\mathbb T^d}\widetilde u_\varepsilon\,dx,
\]
which satisfies
\[
 f_\varepsilon
 :=
 -\Delta u_\varepsilon+|Du_\varepsilon|^\gamma.
\]
Noting that $\chi\equiv1$ on $B_R$ and
$V(y)=c|y|^\alpha$ for $|y|\ge2$, we find

\begin{equation}\label{eq:profile-annulus}
 u_\varepsilon(x)=c|x|^\alpha-C_\varepsilon,
 \qquad\text{whenever }2\varepsilon\le|x|\le R.
\end{equation}
where
$C_\varepsilon:=\fint_{\mathbb T^d}\widetilde u_\varepsilon\,dx$.
In particular, $Du_\varepsilon=DU,$ $D^2u_\varepsilon=D^2U$
on $B_R\setminus B_{2\varepsilon}.$

We next prove the uniform $L^{q_c}$ bound for the source term $f_{\varepsilon}$.  On
$B_{2\varepsilon}$, we have
\[
 -\Delta v_\varepsilon+|Dv_\varepsilon|^\gamma
 =
 \varepsilon^{\alpha-2}F(x/\varepsilon)
 =
 \varepsilon^{-\gamma'}F(x/\varepsilon).
\]
Since $\gamma'q_c=d$,  we find 
\[
 \int_{B_{2\varepsilon}}|f_\varepsilon|^{q_c}\,dx
 =
 \int_{B_2}|F(y)|^{q_c}\,dy.
\]
On $B_R\setminus B_{2\varepsilon}$, \eqref{eq:profile-annulus} and
\eqref{eq:punctured-profile} yield $f_\varepsilon=0.$
On the fixed annulus $B_{2R}\setminus B_R$, both
$\widetilde u_\varepsilon=\chi c|x|^\alpha$ and $f_\varepsilon$ are
independent of $\varepsilon$.  Finally,
$f_\varepsilon=0$ on $\mathbb T^d\setminus B_{2R}$.  Therefore,
\[
 \sup_{0<\varepsilon<R/4}
 \|f_\varepsilon\|_{L^{q_c}(\mathbb T^d)}<\infty.
\]

It remains to prove (\ref{divergence}).
On the annulus $2\varepsilon<|x|<R$, we have
$Du_\varepsilon=DU$ and $D^2u_{\varepsilon}=D^2u$.  
There is a
constant $a_0=a_0(d,\gamma)>0$ such that
\[
 |D^2U(x)|\ge a_0|x|^{\alpha-2}.
\]
Since $q_c(\alpha-2)=-d,$
we obtain
\[
 \begin{aligned}
 \|D^2u_\varepsilon\|_{L^{q_c}(\mathbb T^d)}^{q_c}
 &\ge
 a_0^{q_c}|\mathbb S^{d-1}|
 \int_{2\varepsilon}^R
 r^{q_c(\alpha-2)+d-1}\,dr \\
 &=
 a_0^{q_c}|\mathbb S^{d-1}|
 \log\frac{R}{2\varepsilon},
 \end{aligned}
\]
where \(|\mathbb S^{d-1}|\) denotes the surface measure of the unit sphere.
Similarly, since $\gamma q_c(\alpha-1)=-d,$
we have
\[
 \begin{aligned}
 \bigl\||Du_\varepsilon|^\gamma\bigr\|_{L^{q_c}(\mathbb T^d)}^{q_c}
 &\ge
 (c\alpha)^{\gamma q_c}|\mathbb S^{d-1}|
 \int_{2\varepsilon}^R
 r^{\gamma q_c(\alpha-1)+d-1}\,dr \\
 &=
 (c\alpha)^{\gamma q_c}|\mathbb S^{d-1}|
 \log\frac{R}{2\varepsilon}.
 \end{aligned}
\]
Two terms shown above diverge as $\varepsilon\downarrow0$.

Finally, since $F\not\equiv0$, one has
\[
 \int_{B_{2\varepsilon}}|f_\varepsilon|^{q_c}\,dx
 =
 \int_{B_2}|F|^{q_c}\,dy>0,
 \qquad
 |B_{2\varepsilon}|\rightarrow0.
\]
Thus, $(f_\varepsilon)_\varepsilon$ is not uniformly equi-integrable in
$L^{q_c}(\mathbb T^d)$.
\end{proof}

Proposition \ref{prop:critical-counterexample} implies the   boundedness in $L^{q_c}$ alone is insufficient and we can construct the counterexample, which does not  satisfy 
\eqref{eq:UInt} such that the maximal regularity does not hold; some related disccusions are shown in \cite{CirantVerzini2022}.

Next, we shall give the preliminaries for showing the maximal regularity under the equi-integrability assumption on the source term. To begin with, we   recall the H\"older estimate for \eqref{eq:intro-HJ}, proved by
Dall'Aglio and Porretta \cite{DallAglioPorretta2015}, in the simplified
periodic setting considered here. 

\begin{theorem}[Lemma
2.1 in \cite{CirantVerzini2022}] \label{thm:critical-holder}
For every $M>0$, there is a constant $K=K(d,\gamma,M)$ such that every 
strong solution with zero average to \eqref{eq:intro-HJ} with
$\|f\|_{L^{q_c}}\le M$ satisfies
\begin{equation}\label{eq:global-holder}
 [u]_{C^{0,\alpha_c}(\mathbb T^d)}\le K,
\end{equation}
where $q_c$ and $\alpha_c$ are given in (\ref{q_c_def}) and (\ref{alphac}), respectively.
\end{theorem}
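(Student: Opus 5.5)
Since the statement is quoted from \cite{CirantVerzini2022} (going back to \cite{DallAglioPorretta2015}), my plan is to reproduce the standard argument: first an integral Morrey-type decay estimate for the gradient, then Morrey's lemma to convert it into $C^{0,\alpha_c}$. The natural quantity is the scale-invariant energy
\[
 \Phi(x_0,r):=r^{\gamma'-d}\int_{B_r(x_0)}|Du|^\gamma\,dx ,
\]
which is invariant under $u_r(x)=r^{\frac{2-\gamma}{\gamma-1}}u(x_0+rx)$. By Morrey--Campanato, $\sup_{x_0,r}\Phi(x_0,r)\le C$ gives $\int_{B_r}|Du|\le Cr^{d-1+\alpha_c}$. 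This holds because $\frac{d-\gamma'}{\gamma}+d\bigl(1-\frac1\gamma\bigr)=d-1+\alpha_c$, using $\alpha_c-1=-\frac{1}{\gamma-1}$. It then yields $[u]_{C^{0,\alpha_c}}\le K$.

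\emph{Step 1 (Caccioppoli bound for $|Du|^\gamma$).} I would test the equation against a cutoff $\phi^{\gamma'}$ with $\phi\in C_c^\infty(B_{2r})$, $\phi=1$ on $B_r$. Integrating by parts gives
\[
 \int |Du|^\gamma\phi^{\gamma'}=\int f\phi^{\gamma'}-\gamma'\int\phi^{\gamma'-1}D\phi\cdot Du .
\]
Young's inequality absorbs the last term into the left side, at the cost of $C\int|D\phi|^{\gamma'}\lesssim r^{d-\gamma'}$. H\"older's inequality gives $\int_{B_{2r}}|f|\le\|f\|_{L^{q_c}}r^{d(1-1/q_c)}=Mr^{d-\gamma'}$, since $d/q_c=\gamma'$. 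Hence $\Phi(x_0,r)\le C(1+M)$ uniformly. On $\T^d$ with $r\le 1/4$ the cutoffs are legitimate, and for strong solutions in $W^{2,q_c}$ the integrations by parts are justified by density. In fact the sign of $|Du|^\gamma$ is exactly what makes this one-sided estimate close.

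\emph{Step 2 (Morrey).} I would apply the Morrey embedding: $\int_{B_r(x_0)}|Du|\le C r^{d-1+\alpha_c}$ for all $x_0$ and all $r\le 1/4$ implies $|u(x)-u(y)|\le C|x-y|^{\alpha_c}$. For $|x-y|\ge1/4$ I would use the bounded diameter of $\T^d$ together with the small-scale estimate chained along a segment. The zero-average normalization plays no role in the seminorm.

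The main obstacle is justifying Step 1 rigorously. This is easy here because $u\in W^{2,q_c}$ and $|Du|^\gamma=f+\Delta u\in L^1$. If one wanted weak solutions instead, as in \cite{DallAglioPorretta2015}, a truncation argument would be needed. I expect no other difficulty, since the superquadratic structure $\gamma>2$ enters only through $\alpha_c>0$.
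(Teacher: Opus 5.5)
Your proposal is correct. Testing with $\phi^{\gamma'}$ uses $(\gamma'-1)\gamma=\gamma'$ for the absorption and $d/q_c=\gamma'$ for the source term, which gives $\int_{B_r}|Du|^\gamma\le C(1+M)r^{d-\gamma'}$; Morrey's Dirichlet-growth lemma then yields $[u]_{C^{0,\alpha_c}}\le C(d,\gamma)(1+M)^{1/\gamma}$. The paper gives no proof of its own and simply imports the result from \cite{CirantVerzini2022} (building on \cite{DallAglioPorretta2015}), and your Caccioppoli-plus-Morrey argument, which uses only the subsolution inequality, is essentially the standard argument behind that result.
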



In addition, we have the following H\"{o}lder semi-norm vanishing estimate for $u\in W^{2,q_c}(\mathbb T^d)$.

\begin{lemma}\label{lem:individual-little}
Let $d\ge2$, $\gamma>2$.
Then every $u\in W^{2,q_c}(\mathbb T^d)$ satisfies
\begin{equation}\label{eq:individual-little}
 \lim_{r\downarrow0}
 \sup_{\substack{x,y\in\mathbb T^d\\
 0<\text{dist}(x,y)\le r}}
 \frac{|u(x)-u(y)|}
 {\text{dist}(x,y)^{\alpha_c}}
 =0,
\end{equation}
where $q_c$ and $\alpha_c$ are given in (\ref{q_c_def}) and (\ref{alphac}), respectively.
\end{lemma}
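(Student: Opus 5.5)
The statement is the critical Morrey embedding $W^{2,q_c}\hookrightarrow C^{0,\alpha_c}$ with a ``little-o'' improvement, which comes from absolute continuity of the integral. The plan is to prove a local Morrey-type oscillation estimate whose constant involves only the $L^{d}$-norm of $Du$ on small balls. Two facts make this work. Since $q_c<d$, Sobolev gives $Du\in L^{q_c^*}=L^{\gamma q_c}$. Also $q_c^*$ is exactly the exponent for which Morrey's inequality yields Hölder exponent $1-d/q_c^*=\alpha_c$. Indeed,
\[
 1-\frac{d}{\gamma q_c}=1-\frac{1}{\gamma-1}=\frac{\gamma-2}{\gamma-1}=\alpha_c,
\]
which uses $\gamma>2$ so that $q_c^*>d$.

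\textbf{Step 1.} I will work on the universal cover $\mathbb R^d$ with the periodic extension of $u$. For $x,y$ with $\rho:=|x-y|\le r<1/4$, let $B=B_{2\rho}(x)$, so that $B$ contains $y$. Morrey's local inequality, proved via the standard potential estimate $|u(z)-u_B|\le C\int_B |Du(w)||z-w|^{1-d}\,dw$ followed by Hölder, gives
\[
 |u(x)-u(y)|\le C(d,\gamma)\,\rho^{\alpha_c}\,\|Du\|_{L^{\gamma q_c}(B_{2\rho}(x))}.
\]
This holds for $u\in W^{1,\gamma q_c}_{\rm loc}$. If one prefers, it can be justified first for smooth $u$ and then by density, since $u$ is continuous.

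\textbf{Step 2.} Since $u\in W^{2,q_c}(\mathbb T^d)$, the Sobolev embedding on the torus yields $|Du|^{\gamma q_c}\in L^1(\mathbb T^d)$. By absolute continuity of the Lebesgue integral,
\[
 \eta(r):=\sup_{x}\int_{B_{2r}(x)}|Du|^{\gamma q_c}\,dz\rightarrow0\quad\text{as }r\downarrow0.
\]
This is uniform in $x$ because $|B_{2r}|\to0$ and the function is fixed; periodicity reduces everything to finitely many copies of a fundamental cell. Combining with Step 1, the supremum in \eqref{eq:individual-little} is bounded by $C\eta(r)^{1/(\gamma q_c)}$, which tends to $0$.

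\textbf{Main obstacle.} The only real subtlety is the endpoint use of Sobolev and Morrey: one needs $q_c<d$, so that $W^{2,q_c}$ gives $Du\in L^{q_c^*}$, and $q_c^*>d$, so that Morrey applies. Both follow from $\gamma>2$. One must also use the sharp local form of Morrey, with the norm taken on the small ball rather than the whole torus, since that localization is exactly what produces the vanishing. Finally, the passage from torus distance to Euclidean distance is harmless for $r<1/4$.
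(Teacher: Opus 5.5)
Your proof is correct, but it takes a different route from the paper.

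The paper argues softly. It uses the continuous embedding $W^{2,q_c}(\mathbb T^d)\hookrightarrow C^{0,\alpha_c}(\mathbb T^d)$ and density of $C^\infty(\mathbb T^d)$ to pick a smooth $\phi$ with $[u-\phi]_{C^{0,\alpha_c}}<\varepsilon/2$. It then bounds the remaining quotient by $\|D\phi\|_{L^\infty}\operatorname{dist}(x,y)^{1-\alpha_c}$, which is small for $\operatorname{dist}(x,y)\le r_\varepsilon$.

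You instead factor the embedding as $W^{2,q_c}\hookrightarrow W^{1,\gamma q_c}$, followed by the localized Morrey inequality $|u(x)-u(y)|\le C\rho^{\alpha_c}\|Du\|_{L^{\gamma q_c}(B_{2\rho}(x))}$. You then conclude by absolute continuity of $\int|Du|^{\gamma q_c}$.

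What each approach buys:
\begin{itemize}
\item The paper's version is shorter but purely qualitative: the threshold $r_\varepsilon$ depends on an unspecified approximant $\phi$, so no modulus is produced.
\item Your version yields an explicit modulus,
\[
\sup_{0<\operatorname{dist}(x,y)\le r}\frac{|u(x)-u(y)|}{\operatorname{dist}(x,y)^{\alpha_c}}\le C(d,\gamma)\sup_{x}\|Du\|_{L^{\gamma q_c}(B_{2r}(x))}.
\]
This makes transparent that the vanishing is governed exactly by concentration of $|Du|^{\gamma q_c}$ on small balls. It also shows at once that the quotient vanishes uniformly over any family for which $|Du|^{\gamma q_c}$ is uniformly equi-integrable. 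That is the kind of uniform statement needed later in Theorem~\ref{thm:uniform-little}. There, however, such equi-integrability of the gradients is not known a priori, which is why the paper resorts to blow-up.
\end{itemize}

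One small correction to your ``main obstacle'' remark: $q_c<d$ holds for every $\gamma>1$. Only $\gamma q_c=d(\gamma-1)>d$, which is what Morrey needs, genuinely uses $\gamma>2$.
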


\begin{proof}
Since $\gamma>2$, we have $ q_c=\frac{d(\gamma-1)}{\gamma}>\frac d2$,  Then, by the Morrey embedding, $ W^{2,q_c}(\mathbb T^d)
 \hookrightarrow C^{0,\alpha_c}(\mathbb T^d)$
continuously.

Let $\varepsilon>0$.  Since $C^\infty(\mathbb T^d)$ is dense in
$W^{2,q_c}(\mathbb T^d)$, there exists
$\phi\in C^\infty(\mathbb T^d)$ such that
\[
 [u-\phi]_{C^{0,\alpha_c}(\mathbb T^d)}
 <\frac{\varepsilon}{2}.
\]
Since $\phi$ is smooth, $D\phi\in L^\infty(\mathbb T^d)$.  Choose
$r_\varepsilon>0$ such that
\[
 \|D\phi\|_{L^\infty(\mathbb T^d)}
 r_\varepsilon^{1-\alpha_c}
 <\frac{\varepsilon}{2}.
\]
Then, for every $x,y\in\mathbb T^d$ such that
$0<\operatorname{dist}(x,y)\le r_\varepsilon$, we have
\begin{align*}
 \frac{|u(x)-u(y)|}
 {\operatorname{dist}(x,y)^{\alpha_c}}
 &\le
 [u-\phi]_{C^{0,\alpha_c}(\mathbb T^d)}
 +\frac{|\phi(x)-\phi(y)|}
 {\operatorname{dist} (x,y)^{\alpha_c}} \\
 &\le
 \frac{\varepsilon}{2}
 +\|D\phi\|_{L^\infty(\mathbb T^d)}
 \operatorname{dist}  (x,y)^{1-\alpha_c} <\varepsilon.
\end{align*}
Taking the supremum over such $x$ and $y$ proves
\eqref{eq:individual-little}.
\end{proof}
 
Next, we give the Gagliardo--Nirenberg interpolation inequality.
\begin{lemma}
\label{lem:critical-GN}
Let $B_r(x_0)\subset\mathbb R^d$ be any ball and let
$v\in W^{2,q_c}(B_r(x_0))\cap C^{0,\alpha_c}(B_r(x_0))$.  Then
\begin{equation}\label{eq:critical-GN}
 \|Dv\|_{L^{\gamma q_c}(B_r(x_0))}^\gamma
 \le C[v]_{C^{0,\alpha_c}(B_r(x_0))}^{\gamma-1}
       \|D^2v\|_{L^{q_c}(B_r(x_0))}
 +C[v]_{C^{0,\alpha_c}(B_r(x_0))}^\gamma,
\end{equation}
 
where the constant $C$ depends only on $d$ and $\gamma$.
\end{lemma}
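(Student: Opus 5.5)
The plan is a scaling reduction to the unit ball, followed by a Gagliardo--Nirenberg interpolation between the H\"older seminorm and the Hessian on that fixed domain. The exponents are consistent with this: since $\gamma q_c(\alpha_c-1)=-d$ and $q_c(\alpha_c-2)=-d$, each term of \eqref{eq:critical-GN} is invariant under $v_r(y):=r^{-\alpha_c}v(x_0+ry)$. Indeed, $\|Dv_r\|_{L^{\gamma q_c}(B_1)}^{\gamma}=r^{\gamma(1-\alpha_c)-\gamma d/(\gamma q_c)}\|Dv\|^{\gamma}_{L^{\gamma q_c}(B_r)}$, and the exponent $\gamma(1-\alpha_c)-d/q_c=\gamma'-\gamma'=0$. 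The same bookkeeping gives $\|D^2v_r\|_{L^{q_c}(B_1)}=r^{2-\alpha_c-d/q_c}\|D^2v\|_{L^{q_c}(B_r)}=\|D^2v\|_{L^{q_c}(B_r)}$ and $[v_r]_{C^{0,\alpha_c}(B_1)}=[v]_{C^{0,\alpha_c}(B_r)}$. It therefore suffices to take $B_r(x_0)=B_1$, and because the inequality involves only derivatives and seminorms, we may also subtract a constant and assume $v(0)=0$.

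On $B_1$ we use the classical Gagliardo--Nirenberg inequality in which the lower-order endpoint is a H\"older space $C^{0,\alpha}$; see Nirenberg's 1966 extended form, or Miranda's results for H\"older endpoints. Set $p=\gamma q_c$ and $\theta=1/\gamma$. The dimensional balance reads
\[
 1-\frac dp=\theta\Bigl(2-\frac d{q_c}\Bigr)+(1-\theta)\alpha_c .
\]
Since $2-d/q_c=\alpha_c$, the right-hand side equals $\alpha_c$, while the left-hand side is $1-\gamma'/\gamma=1-1/(\gamma-1)=\alpha_c$, so the balance holds for every $\theta$. The choice $\theta=1/\gamma$ is dictated by the desired powers: it puts $\|D^2v\|_{L^{q_c}}^{1}$ against $[v]^{\gamma-1}$ once everything is raised to the power $\gamma$. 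Nirenberg's theorem applies because $\theta=1/\gamma\in[j/m,1)=[1/2,1)$ fails only when $\gamma>2$... In fact $1/\gamma<1/2$ for $\gamma>2$, so the standard range condition $\theta\ge j/m$ is violated. For this reason I would not rely on the abstract theorem. Instead I would prove the bound directly, and the proof will only use $\alpha_c>0$.

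The direct route is a local pointwise argument. For a.e.\ $x\in B_1$ and each $\rho\le$ the distance to the boundary (handled by an extension from $B_1$ to $B_2$ preserving both seminorms, or by a Whitney-type covering), the mean-value/Taylor formula on $B_\rho(x)$ gives
\[
 |Dv(x)|\lesssim \rho^{\alpha_c-1}[v]_{C^{0,\alpha_c}}+\rho\, M(D^2v)(x),
\]
where $M$ is the Hardy--Littlewood maximal function. This follows by comparing $Dv(x)$ with the average of $Dv$ over $B_\rho(x)$, whose size is controlled via the oscillation of $v$, and bounding the oscillation of $Dv$ by $\rho M(D^2v)$. Optimizing over $\rho$, namely choosing $\rho^{2-\alpha_c}=[v]/M(D^2v)$, yields
\[
 |Dv|\lesssim [v]^{1/(2-\alpha_c)}\,M(D^2v)^{(1-\alpha_c)/(2-\alpha_c)} .
\]
Now $1-\alpha_c=1/(\gamma-1)$ and $2-\alpha_c=\gamma'$, so the exponent on $M(D^2v)$ is $1/\gamma$ and the exponent on $[v]$ is $(\gamma-1)/\gamma$. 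Raising to the power $\gamma q_c$ and integrating, the maximal function appears to the power $q_c$, and $q_c>1$ together with the $L^{q_c}$-boundedness of $M$ gives the first term on the right of \eqref{eq:critical-GN}.

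When the optimal $\rho$ exceeds the admissible scale (of order $1$), we take $\rho\sim1$ instead. This produces the term $|Dv|\lesssim[v]$, and after integration over $B_1$ it yields the additive term $C[v]^\gamma$.

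The main obstacle is the boundary of the ball, where the optimal $\rho$ is not admissible near $\partial B_1$. I would handle this with a reflection or extension operator that is bounded simultaneously on $\dot C^{0,\alpha_c}$ and on $\dot W^{2,q_c}$ (modulo affine functions, with the affine part absorbed by the lower-order term). The alternative is to use cones inside $B_1$ in place of balls centered at $x$, which works because $B_1$ satisfies a uniform cone condition.
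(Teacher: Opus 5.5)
Your proof is correct. It shares the paper's first step: the scaling $\psi(y)=r^{-\alpha_c}(v(x_0+ry)-v(x_0))$, under which all three quantities are invariant. On $B_1$, however, you replace the paper's citation of a Gagliardo--Nirenberg inequality (Nirenberg, Triebel) with a self-contained maximal-function argument, and this matters. The relation the paper uses to ``determine'' $a$ in fact holds for every $a$. The form of \eqref{eq:critical-GN}, after raising to the power $\gamma$, requires exactly $a=1/\gamma<1/2=j/m$ (with $j=1$, $m=2$), which is below the range of Nirenberg's theorem with an $L^q$ lower endpoint. The citation is therefore valid only in its H\"older (Besov $B^{\alpha_c}_{\infty,\infty}$) endpoint form, where the smoothness balance $1=2a+(1-a)\alpha_c$ singles out $a=1/\gamma$. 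Your pointwise bound $|Dv(x)|\lesssim\rho^{\alpha_c-1}[v]+\rho M(D^2v)(x)$ proves exactly this endpoint. Telescoping Poincar\'e on dyadic balls gives the second term, and the divergence theorem bounds the average of $Dv$ over $B_\rho(x)$ by $d\rho^{\alpha_c-1}[v]$. Optimizing in $\rho$ then shows why $(1-\alpha_c)/(2-\alpha_c)=1/\gamma$ appears, and the argument needs only $0<\alpha_c<1$ and $q_c>1$ (for the maximal theorem). The price is the boundary, which the citation hides. Your extension plan handles it once you make the affine correction explicit:
\begin{itemize}
\item Set $P(x)=(v)_{B_1}+(Dv)_{B_1}\cdot x$, where $(\cdot)_{B_1}$ denotes averages over $B_1$.
\item The same divergence-theorem bound gives $|DP|\le d[v]_{C^{0,\alpha_c}(B_1)}$, hence $\|v-P\|_{C^{0,\alpha_c}(B_1)}\lesssim[v]$.
\item Since $v-P$ and $D(v-P)$ have zero average, Poincar\'e gives $\|v-P\|_{W^{2,q_c}(B_1)}\lesssim\|D^2v\|_{L^{q_c}(B_1)}$.
\item Extend $v-P$ to $B_2$ with any operator bounded on both spaces, and run your argument for $x\in B_1$ with $\rho\le1$. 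The cap $\rho\le1$ produces the term $C[v]^\gamma$.
\item Finally, add back $|DP|\lesssim[v]$.
\end{itemize}
You should also remove the false start in your second paragraph (the sentence beginning ``Nirenberg's theorem applies because''), since you correctly retract it immediately afterwards.
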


\begin{proof}
 First of all, for $B_r(x_0)=B_1,$ Gagliardo--Nirenberg interpolation inequality
\cite{Nirenberg1959,Triebel1983} gives
\begin{equation}\label{eq:GN-base}
 \|D \psi\|_{L^{q_c^*}(B_1)}
 \le
 C\|D^2 \psi\|_{L^{q_c}(B_1)}^{a}
 [\psi]_{C^{0,\alpha_c}(B_1)}^{1-a}
 +C[\psi]_{C^{0,\alpha_c}(B_1)},
\end{equation}
where $a\in(0,1)$ is determined by
\[
 \frac{1}{q_c^*}-\frac{1}{d}
 =
 a\left(\frac{1}{q_c}-\frac{2}{d}\right)
 -(1-a)\frac{\alpha_c}{d},
\]
and $q^*_c$ is given in \eqref{eq:critical-identities}.  Then, we apply \eqref{eq:GN-base} to $ \psi(y):=r^{-\alpha_c}
 \bigl(v(x_0+ry)-v(x_0)\bigr),$ $y\in B_1$ and obtain from $\alpha_c=2-\frac{d}{q_c}$ and $q_c^*=\gamma q_c$ that 
\[
 [\psi]_{C^{0,\alpha_c}(B_1)}
 =[v]_{C^{0,\alpha_c}(B_r(x_0))},~~
 \|D\psi\|_{L^{\gamma q_c}(B_1)}
 =\|Dv\|_{L^{\gamma q_c}(B_r(x_0))},
~~
 \|D^2\psi\|_{L^{q_c}(B_1)}
 =\|D^2v\|_{L^{q_c}(B_r(x_0))}.
\]
Substitution the identities shown above in \eqref{eq:GN-base} and raising the resulting inequality to
the power $\gamma$ prove \eqref{eq:critical-GN}, with a constant $C$ independent
of $r$.
\end{proof}

 For use in the blow-up argument proving Theorem~\ref{thm:main}, we now
establish the following a priori estimate for the linearized
Hamilton--Jacobi equation with a small drift.



\begin{lemma} \label{lem:small-drift}
Let $1<p<d$.  There are constants $\kappa_p>0$ and $C_p>0$ with the following
properties.

\begin{enumerate}
\item[(i).] If $\mathbf{b}\in L^d(B_2;\mathbb R^d)$,
$\|\mathbf{b}\|_{L^d(B_2)}\le\kappa_p$, and
$z\in W^{2,p}(B_2)$ solves
\begin{equation}\label{eq:drift-equation}
 -\Delta z+\mathbf{b}\cdot Dz=h\quad\text{in }B_2,
\end{equation}
then
\begin{equation}\label{eq:drift-estimate}
 \|D^2z\|_{L^p(B_1)}
 \le C_p\bigl(\|h\|_{L^p(B_2)}+\|z\|_{L^p(B_2)}\bigr).
\end{equation}

\item[(ii).] Suppose that $1<p_0<p<d$, $z\in W^{2,p_0}(B_2)\cap W^{1,p}(B_2)$ solves
\eqref{eq:drift-equation} with $h\in L^p(B_2)$, and  $\|\mathbf{b}\|_{L^d(B_2)}
 \le \min\{\kappa_{p_0},\kappa_p\}.
$
Then $ z\in W^{2,p}(B_1)$
and
\[
 \|z\|_{W^{2,p}(B_1)}
 \le C\bigl(
 \|h\|_{L^p(B_2)}
 +\|z\|_{W^{1,p}(B_2)}
 \bigr),
\]
where $C$ depends only on $d$, $p_0$, and $p$.
\end{enumerate}
\end{lemma}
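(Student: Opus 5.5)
The plan is to treat the drift term as a small perturbation of the Laplacian and combine interior Calder\'on--Zygmund estimates with Sobolev embedding, localizing with cutoffs on a family of nested balls between $B_1$ and $B_2$.

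\textbf{Part (i).} Fix radii $1\le s<t\le 2$ and a cutoff $\eta\in C_c^\infty(B_t)$ with $\eta\equiv1$ on $B_s$ and $|D\eta|\le C/(t-s)$, $|D^2\eta|\le C/(t-s)^2$. The function $\eta z$ satisfies
\[
-\Delta(\eta z)=\eta h-\eta\,\mathbf b\cdot Dz-2D\eta\cdot Dz-z\Delta\eta .
\]
We then apply the $L^p$ Calder\'on--Zygmund estimate for compactly supported functions, $\|D^2(\eta z)\|_{L^p}\le C\|\Delta(\eta z)\|_{L^p}$. The drift term is controlled by H\"older's inequality with $\frac1p=\frac1d+\frac1{p^*}$ and the Sobolev inequality $\|D(\eta z)\|_{L^{p^*}}\le C\|D^2(\eta z)\|_{L^p}$, which is valid since $p<d$ and $\eta z$ has compact support:
\[
\|\eta\,\mathbf b\cdot Dz\|_{L^p}\le \|\mathbf b\|_{L^d}\bigl(\|D(\eta z)\|_{L^{p^*}}+\|zD\eta\|_{L^{p^*}}\bigr)\le C\kappa_p\|D^2(\eta z)\|_{L^p}+C\kappa_p\|zD\eta\|_{L^{p^*}}.
\]
Choosing $\kappa_p$ with $C\kappa_p\le\frac12$ lets us absorb the first term into the left-hand side. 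The leftover pieces are $\|zD\eta\|_{L^{p^*}}$, $\|D\eta\cdot Dz\|_{L^p}$ and $\|z\Delta\eta\|_{L^p}$. We bound the $L^{p^*}$ norm of $z$ on $B_t$ by Sobolev embedding through $\|z\|_{W^{2,p}(B_t)}$, and we use the interpolation inequality $\|Dz\|_{L^p(B_t)}\le\varepsilon\|D^2z\|_{L^p(B_t)}+C_\varepsilon\|z\|_{L^p(B_t)}$. This yields, for every $\varepsilon>0$,
\[
\Phi(s)\le \varepsilon\,\Phi(t)+C(t-s)^{-N}\bigl(\|h\|_{L^p(B_2)}+\|z\|_{L^p(B_2)}\bigr),\qquad \Phi(\rho):=\|D^2z\|_{L^p(B_\rho)}.
\]
The standard iteration lemma for such inequalities then gives \eqref{eq:drift-estimate}. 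This absorption step is where the smallness in $L^d$ is essential, since $L^d$ is exactly the scale-invariant space for the drift.

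\textbf{Part (ii).} Here we only know $z\in W^{2,p_0}$, so the a priori estimate of (i) cannot be applied directly at exponent $p$; this regularity upgrade is the main obstacle. I would proceed by uniqueness and approximation.

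1. On an intermediate ball, set $g:=h-\mathbf b\cdot Dz$. With $Dz\in L^p$ and $\mathbf b\in L^d$, we only get $g\in L^{r}$ with $\frac1r=\frac1p+\frac1d$, which is weaker than $L^p$. So instead I treat the equation as a linear problem for $w=\eta z$ with an operator of the form $I-T$, where $T\phi:=\Delta^{-1}(\eta\,\mathbf b\cdot D\phi)$ is taken on a large ball with a Dirichlet or whole-space inverse.

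2. By the computation in (i), $T$ is a contraction both on $\{D^2\phi\in L^p\}$ and on $\{D^2\phi\in L^{p_0}\}$, with norm at most $\frac12$, because $\|\mathbf b\|_{L^d}\le\min\{\kappa_{p_0},\kappa_p\}$.

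3. The remaining source terms ($\eta h$, $D\eta\cdot Dz$, $z\Delta\eta$) lie in $L^p$ because $z\in W^{1,p}$. The Neumann series therefore produces a solution $\tilde w$ with $D^2\tilde w\in L^p\cap L^{p_0}$.

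4. Contraction in the $W^{2,p_0}$-type space gives uniqueness there, so $\tilde w=\eta z$. Hence $z\in W^{2,p}(B_1)$, and the estimate follows from the same bounds, with $\|Dz\|_{L^p}$ now controlled by the $W^{1,p}(B_2)$ norm.

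In carrying this out, I would take care that both function spaces use the same Dirichlet inverse, so that the fixed points coincide.
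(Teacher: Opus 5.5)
Your proposal is correct and follows essentially the same route as the paper. In (i) you use the cutoff, Calder\'on--Zygmund and Sobolev absorption under $\|\mathbf b\|_{L^d}\le\kappa_p$, followed by an iteration over nested balls; in (ii) you solve the localized problem in $W^{2,p}\cap W^{1,p}_0(B_\rho)$ by a contraction built on the Dirichlet inverse and identify the fixed point with $\eta z$ through the a priori estimate at exponent $p_0$.

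Two small bookkeeping fixes are needed:
\begin{enumerate}
\item In (ii), the drift inside $T$ should be $\mathbf b$ itself, not $\eta\mathbf b$. Otherwise a term $(1-\eta)\eta\,\mathbf b\cdot Dz\notin L^p$ is left in the source. With $\mathbf b$ the source instead picks up the extra term $z\,\mathbf b\cdot D\eta$, which lies in $L^p$ because $\mathbf b\in L^d$ and $z\in W^{1,p}\subset L^{p^*}$; this is exactly what the paper does.
\item In (i), bound $\|z\|_{L^{p^*}(B_t)}$ by $\|Dz\|_{L^p(B_t)}+\|z\|_{L^p(B_t)}$ before interpolating, rather than by the full $W^{2,p}$ norm. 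Otherwise the coefficient of $\Phi(t)$ carries a factor $(t-s)^{-1}$ and the iteration lemma no longer applies.
\end{enumerate}
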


\begin{proof}
For conclusion (i), we choose $1<r<s<2$ and a cutoff $\eta$ equal to one on $B_r$ and supported
in $B_s$.  Define $Z=\eta z$ and extend it by zero, then 
Calder\'on--Zygmund and Sobolev inequalities in the whole space give
\begin{align}\label{CZ_estimate}
 \|D^2Z\|_{L^p}
 \le C_p\|\Delta Z\|_{L^p},
 \qquad
 \|DZ\|_{L^{p^*}}\le C_p\|D^2Z\|_{L^p},
\end{align}
whre $C_p>0$ is a constant.
In light of (\ref{eq:drift-equation}), we obtain $Z$ satisfies
\[
 -\Delta Z+\mathbf{b}\cdot DZ
 =\eta h-2D\eta\cdot Dz-z\Delta\eta+\mathbf{b}\cdot zD\eta.
\]
Then by using (\ref{CZ_estimate}), we estimate the drift to obtain
\[
 \|\mathbf{b}\cdot DZ\|_{L^p}
 \le\|\mathbf{b}\|_{L^d}\|DZ\|_{L^{p^*}}
 \le C_p\|\mathbf{b}\|_{L^d}\|D^2Z\|_{L^p}.
\]
Moreover, the interpolation inequality implies
\[
 \|Dz\|_{L^p(B_s)}
 \le \theta(s-r)\|D^2z\|_{L^p(B_s)}
 +C_{p,\theta}(s-r)^{-1}\|z\|_{L^p(B_s)}.
\]
The term $z \mathbf{b}\cdot D\eta$ is treated similarly by using 
$\|z\|_{L^{p^*}}\le C(\|Dz\|_{L^p}+\|z\|_{L^p}).$  
Taking first $\kappa_p$ and then $\theta$ small gives
\[
 \|D^2z\|_{L^p(B_r)}\le\tfrac12\|D^2z\|_{L^p(B_s)}
 +C_p(s-r)^{-2}
   \bigl(\|h\|_{L^p(B_2)}+\|z\|_{L^p(B_2)}\bigr).
\]
We next use the hole-filling lemma to prove \eqref{eq:drift-estimate}.  Set
\[
 H:=\|h\|_{L^p(B_2)}+\|z\|_{L^p(B_2)}.
\]
Fix $q\in(2^{-1/2},1)$, for instance $q=3/4$, and define
\[
 r_k:=2-q^k,\qquad k=0,1,2,\ldots.
\]
Then $r_0=1$, $r_k\uparrow2$, and
\[
 r_{k+1}-r_k=(1-q)q^k.
\]
Applying the preceding estimate with $r=r_k$ and $s=r_{k+1}$ gives
\[ \|D^2z\|_{L^p(B_{r_k})}
 \le \frac12 \|D^2z\|_{L^p(B_{r_{k+1}})}  
 +C_p(1-q)^{-2}q^{-2k}H,
\]
where $C_p>0$ is a constant.
Iterating from $k=0$ to $N-1$, we obtain
\begin{align}\label{filling-hole-1}
\|D^2z\|_{L^p(B_1)}
 \le 2^{-N}\|D^2z\|_{L^p(B_{r_N})}
 +C_p(1-q)^{-2}
 \sum_{k=0}^{N-1}\left(\frac{1}{2q^2}\right)^k H.
\end{align}
Since $q>2^{-1/2}$, the right hand side above converges. Moreover,
$ \|D^2z\|_{L^p(B_{r_N})}\le \|D^2z\|_{L^p(B_2)}<\infty$ since $z\in W^{2,p}(B_2)$; hence
\[
 2^{-N} \|D^2z\|_{L^p(B_{r_N})}\rightarrow0
 \qquad\text{as }N\to\infty.
\]
Letting $N\to\infty$ in (\ref{filling-hole-1}), we conclude that
 \eqref{eq:drift-estimate} holds.

For conclusion (ii), similarly, we choose radii $1<r<\rho<\frac32$
and a cutoff function $\eta\in C_c^\infty(B_\rho)$ such that
$0\le\eta\le1$ and $\eta\equiv1$ on $B_r$.  For $1<s<d$, define  $X_s(B_\rho):=W^{2,s}(B_\rho)\cap W^{1,s}_0(B_\rho)$ and set $Z:=\eta z$,
then
\begin{align}
 (-\Delta+\mathbf {b}\cdot D)Z
 &=
 \eta h-2D\eta\cdot Dz-z\Delta\eta+(\mathbf{b}\cdot D\eta)z=:F \text{ in } B_\rho. \label{eq:localized-drift-equation}
\end{align}
Since $z\in W^{1,p}(B_2)$, the first three terms in $F$ belong to
$L^p(B_\rho)$.  Moreover, by H\"older's inequality and Sobolev embedding,
\[
 \|(\mathbf{b}\cdot D\eta)z\|_{L^p(B_\rho)}
 \le C\|\mathbf{b}\|_{L^d(B_\rho)}\|z\|_{L^{p^*}(B_\rho)}
 \le C\|\mathbf{b}\|_{L^d(B_\rho)}\|z\|_{W^{1,p}(B_\rho)},
\]
where $C>0$ is a constant.  It then follows that 
$F\in L^p(B_\rho)$.


We next prove that the linear drift operator  $L_b:=-\Delta+\mathbf{b}\cdot D$
is an isomorphism from $W^{2,p}(B_\rho)\cap W^{1,p}_0(B_\rho)$ onto $L^p(B_\rho)$ 
 when $\|\mathbf{b}\|_{L^d(B_\rho)}$ is sufficiently
small.  Firstly, the Calder\'on--Zygmund estimate  yields
\[
 \|w\|_{W^{2,p}(B_\rho)}
 \le C_0\|\Delta w\|_{L^p(B_\rho)}
 \qquad
 \text{for every }w\in W^{2,p}(B_\rho)\cap W^{1,p}_0(B_\rho).
\]
Since $
 -\Delta w=L_bw-\mathbf{b}\cdot Dw,$
H\"older's inequality and the Sobolev embedding
$W^{2,p}(B_\rho)\cap W^{1,p}_0(B_\rho)
\hookrightarrow W^{1,p^*}(B_\rho)$ give
\begin{align}\label{findfrom}
 \|w\|_{W^{2,p}(B_\rho)}
 &\le C_0\|L_bw\|_{L^p(B_\rho)}
      +C_0\|\mathbf{b}\cdot Dw\|_{L^p(B_\rho)}\nonumber\\
 &\le C_0\|L_bw\|_{L^p(B_\rho)}
      +C_0\|\mathbf{b}\|_{L^d(B_\rho)}
       \|Dw\|_{L^{p^*}(B_\rho)}\nonumber\\
 &\le C_0\|L_bw\|_{L^p(B_\rho)}
      +C_1\|\mathbf{b}\|_{L^d(B_\rho)}
       \|w\|_{W^{2,p}(B_\rho)},
\end{align}
where $C_0$ and $C_1$ are positive constants.
Choose $\kappa_p>0$ so that $C_1\kappa_p\le\frac12$.

If $\|\mathbf{b}\|_{L^d(B_\rho)}\le\kappa_p$, we find from (\ref{findfrom}) that
\begin{equation}\label{eq:dirichlet-drift-estimate-p}
 \|w\|_{W^{2,p}(B_\rho)}
 \le 2C_0\|L_bw\|_{L^p(B_\rho)}
 \qquad\text{for every }w \in W^{2,p}(B_\rho)\cap W^{1,p}_0(B_\rho).
\end{equation}
This estimate gives injectivity.  Moreover, to obtain surjectivity, let
$G_p:L^p(B_\rho)\to W^{2,p}(B_\rho)\cap W^{1,p}_0(B_\rho)$ be the inverse of the Dirichlet Laplacian:
\[
 -\Delta G_p(g)=g,\qquad G_p(g)=0  \quad\text{on }\partial B_\rho.
\]
For $F\in L^p(B_\rho)$, consider
\[
 \Phi(w):=G_p(F-b\cdot Dw).
\]
The same estimate as above shows that
\[
 \|\Phi(w_1)-\Phi(w_2)\|_{W^{2,p}(B_\rho)}
 \le C_1\|b\|_{L^d(B_\rho)}
 \|w_1-w_2\|_{W^{2,p}(B_\rho)}.
\]
Thus, $\Phi$ is a contraction when
$\|b\|_{L^d(B_\rho)}\le\kappa_p$.  Its fixed point
$\widetilde Z\in X_p(B_\rho)$ satisfies
\[
 L_b\widetilde Z=F
 \quad\text{in }B_\rho,
 \qquad
 \widetilde Z=0
 \quad\text{on }\partial B_\rho.
\]
The a-priori estimate implies
\begin{equation}\label{eq:localized-solution-bound}
 \|\widetilde Z\|_{W^{2,p}(B_\rho)}
 \le C\|F\|_{L^p(B_\rho)}
 \le C\bigl(\|h\|_{L^p(B_2)}
             +\|z\|_{W^{1,p}(B_2)}\bigr),
\end{equation}
where $C>0$ is a constant.

We now compare $\widetilde Z$ with the original solution $Z$ to (\ref{eq:localized-drift-equation}).
Since $p>p_0$,
\[
 \widetilde Z\in W^{2,p}(B_\rho)\subset W^{2,p_0}(B_\rho).
\]
Therefore, $ Y:=Z-\widetilde Z
 \in X_{p_0}(B_\rho)$ and $ L_bY=0   ~\text{in }B_\rho.$
Applying  estimate \eqref{eq:dirichlet-drift-estimate-p} with
the exponent $p_0$  and using
$\|\mathbf{b}\|_{L^d(B_\rho)}\le\kappa_{p_0}$ gives
\[
 \|Y\|_{W^{2,p_0}(B_\rho)}
 \le C\|L_bY\|_{L^{p_0}(B_\rho)}=0.
\]
  Hence $Y=0$, so $Z=\widetilde Z\in W^{2,p}(B_\rho)$.  Since
$\eta\equiv1$ on $B_r$, \eqref{eq:localized-solution-bound} yields
\[
 \|z\|_{W^{2,p}(B_r)}
 \le C\bigl(\|h\|_{L^p(B_2)}
             +\|z\|_{W^{1,p}(B_2)}\bigr).
\]
Taking $r>1$ proves the desired interior regularity and estimate on $B_1$.
This completes the proof of this lemma.

\end{proof}
 
To derive the contradiction in the blow-up argument, we prove the following
Liouville theorem for \eqref{eq:intro-HJ}.
\begin{lemma} 
\label{lem:endpoint-liouville}
Let $a\ge0$ and
$v\in W^{2,q_c}_{\mathrm{loc}}(\R^d)\cap C^{0,\alpha_c}(\R^d)$ 
solve
\begin{equation}\label{eq:entire-equation}
 -\Delta 
 v+a|Dv|^\gamma=0\quad\text{a.e. in }\R^d.
\end{equation}
Then, $v$ is constant.
\end{lemma}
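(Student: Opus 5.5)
The plan is to split into the cases $a=0$ and $a>0$. In the second case I first bootstrap $v$ to a classical solution, using the small-drift Lemma~\ref{lem:small-drift}, and then apply Lions' Bernstein gradient bound.

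\emph{Case $a=0$.} Here $v$ is a strong, hence weakly harmonic, $W^{2,q_c}_{\mathrm{loc}}$ solution of $\Delta v=0$, so by Weyl's lemma it is smooth and harmonic. The interior gradient estimate for harmonic functions and the global H\"older bound give, for every $x\in\mathbb R^d$ and $R>0$,
\[
|Dv(x)|\le \frac{C(d)}{R}\,\operatorname{osc}_{B_R(x)}v\le C(d)\,[v]_{C^{0,\alpha_c}(\mathbb R^d)}\,R^{\alpha_c-1}.
\]
Since $\alpha_c<1$, letting $R\to\infty$ gives $Dv\equiv0$.

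\emph{Case $a>0$, regularity.} Replacing $v$ by $a^{1/(\gamma-1)}v$, we may assume $a=1$. Rewrite the equation as a linear equation with drift,
\[
-\Delta v+\mathbf b\cdot Dv=0,\qquad \mathbf b:=|Dv|^{\gamma-2}Dv .
\]
Since $v\in W^{2,q_c}_{\mathrm{loc}}$ and $q_c^*=\gamma q_c$, Sobolev embedding gives $Dv\in L^{\gamma q_c}_{\mathrm{loc}}$. Because $(\gamma-1)d=\gamma q_c$, this means exactly $\mathbf b\in L^d_{\mathrm{loc}}$.

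Fix a point $x_0$ and a target exponent $p\in(q_c,d)$. By absolute continuity of the integral there is $\rho>0$ with $\|\mathbf b\|_{L^d(B_{2\rho}(x_0))}\le\min\{\kappa_{q_c},\kappa_p\}$; note that $\|\mathbf b\|_{L^d}$ is invariant under the rescaling $y\mapsto x_0+\rho y$. We then apply Lemma~\ref{lem:small-drift}(ii) with $h=0$ and $p_0=q_c$ to the rescaled function. Doing this in finitely many steps $p_0<p_1<\dots$, each with $p_{k}\le p_{k-1}^*$ so that $z\in W^{1,p_k}$ is available, yields $v\in W^{2,p}(B_\rho(x_0))$ for every $p<d$. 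Consequently $Dv\in L^s_{\mathrm{loc}}$ for every $s<\infty$, so $\Delta v=|Dv|^\gamma\in L^s_{\mathrm{loc}}$ for every $s$.

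Standard Calder\'on--Zygmund theory then gives $v\in W^{2,s}_{\mathrm{loc}}$ for $s>d$, hence $v\in C^{1,\beta}_{\mathrm{loc}}$. Now $|Dv|^\gamma\in C^{0,\beta'}_{\mathrm{loc}}$, and Schauder estimates give $v\in C^{2,\beta'}_{\mathrm{loc}}$. Since $\gamma>2$, the map $p\mapsto|p|^\gamma$ is $C^2$, and one further Schauder step gives $v\in C^3$, which is enough for the Bernstein computation.

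\emph{Case $a>0$, Liouville.} Next apply the Bernstein (Lions~\cite{Lions1985}) local gradient estimate for classical solutions of $-\Delta v+|Dv|^\gamma=0$ in $B_R(x)$:
\[
|Dv(x)|\le C(d,\gamma)\,R^{-1/(\gamma-1)}.
\]
Here is a sketch in case one wants it self-contained. Set $w=|Dv|^2$ and differentiate the equation to get $-\Delta w+\gamma|Dv|^{\gamma-2}Dv\cdot Dw+2|D^2v|^2=0$. Combine this with $|D^2v|^2\ge(\Delta v)^2/d=w^\gamma/d$, and test at an interior maximum of $\eta w$ with a cutoff $\eta$ adapted to $B_R$. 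Letting $R\to\infty$ gives $Dv\equiv0$, so $v$ is constant. Note that the global H\"older assumption is used only in the case $a=0$.

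\emph{Main obstacle.} I expect the main difficulty to be the regularity bootstrap. The critical integrability $|Dv|^\gamma\in L^{q_c}$ gives no gain by scaling alone, and the gain comes only from the smallness of $\|\mathbf b\|_{L^d}$ on small balls. The bookkeeping of the exponents in Lemma~\ref{lem:small-drift}(ii) must be done carefully: one needs $z\in W^{1,p}$ at each step and $p<d$ before passing to the Calder\'on--Zygmund stage.
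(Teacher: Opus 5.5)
Your proof is correct, but in the case $a>0$ it reaches the Liouville conclusion by a different route from the paper.

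The paper applies Lemma~\ref{lem:small-drift}(ii) once, with $p_0=q_c$ and a single exponent $p\in(\max\{q_c,d\gamma/(\gamma+1)\},d)$. This $p$ is chosen so that $p^*/\gamma>d$, which gives $\Delta v=a|Dv|^\gamma\in L^s_{\mathrm{loc}}$ with $s>d$, hence $v\in W^{2,s}_{\mathrm{loc}}$. The paper then invokes the superquadratic Liouville theorem \cite[Lemma~2.5]{CirantVerzini2022}.

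You instead push the bootstrap through Schauder theory to $C^3$. You then prove rigidity directly from Lions' Bernstein bound $|Dv(x)|\le C(d,\gamma)R^{-1/(\gamma-1)}$, letting $R\to\infty$.

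The two routes trade off as follows:
\begin{itemize}
\item The paper's argument is shorter but delegates the key rigidity step to an external lemma.
\item Yours is essentially self-contained, since you sketch the Bernstein computation.
\item Yours also shows that for $a>0$ no growth or H\"older assumption on $v$ is needed; the global $C^{0,\alpha_c}$ bound enters only in the harmonic case $a=0$.
\end{itemize}

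One simplification: your chain $p_0<p_1<\cdots$ is unnecessary. Since $\gamma>2$, you have $Dv\in L^{\gamma q_c}_{\mathrm{loc}}=L^{d(\gamma-1)}_{\mathrm{loc}}\subset L^p_{\mathrm{loc}}$ for every $p<d$. So the hypothesis $z\in W^{1,p}$ of Lemma~\ref{lem:small-drift}(ii) holds from the start, and one application with $p_0=q_c$ suffices for each target $p<d$, exactly as in the paper.
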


\begin{proof}
  If $a=0$, then $v$ is
harmonic and, for every $x_0\in\mathbb R^d$ and $R>0$, the interior gradient
estimate gives
\[
 |Dv(x_0)|
 \le \frac{C}{R}\sup_{B_R(x_0)}|v-v(x_0)|
 \le C [v]_{C^{0,\alpha_c}(\mathbb R^d)}R^{\alpha_c-1}.
\]
Letting $R\to\infty$ and using $\alpha_c<1$ yields $Dv(x_0)=0$.

Assume now that $a>0$ and define $\mathbf b:=a|Dv|^{\gamma-2}Dv$.  The Sobolev embedding
$W^{2,q_c}_{\mathrm{loc}}\hookrightarrow
W^{1,\gamma q_c}_{\mathrm{loc}}$ and the identity
$\gamma q_c=d(\gamma-1)$ imply
$\mathbf b\in L^d_{\mathrm{loc}}(\mathbb R^d)$.  Fix $x_0\in\mathbb R^d$, by absolute continuity, one can choose $r=r(x_0)>0$ such that $\|\mathbf b\|_{L^d(B_{2r}(x_0))}
 \le \min\{\kappa_{q_c},\kappa_p\},$
where $p$ is chosen so that
\begin{equation}\label{eq:liouville-p-choice}
 \max\left\{q_c,\frac{d\gamma}{\gamma+1}\right\}<p<d,
\end{equation}
where we have used
$q_c<d$ and $d\gamma/(\gamma+1)<d$.  Moreover,
$\gamma q_c=d(\gamma-1)>d>p$, hence
$v\in W^{1,p}_{\mathrm{loc}}$.

After translating $x_0$ and scaling $B_{2r}(x_0)$ to $B_2$, equation
\eqref{eq:entire-equation} becomes
$-\Delta v+\mathbf b\cdot Dv=0$, and the $L^d$ norm of the  drift is scaling invariant.  Invoking  Lemma~\ref{lem:small-drift}(ii), with $p_0=q_c$,   we obtain $v\in W^{2,p}_{\mathrm{loc}}(\mathbb R^d).$
Consequently, $Dv\in L^{p^*}_{\mathrm{loc}}(\mathbb R^d)$,
 $p^*:=\frac{dp}{d-p}$,
and \eqref{eq:liouville-p-choice} is exactly the condition $s:=\frac{p^*}{\gamma}>d.$
It follows from \eqref{eq:entire-equation} that
$\Delta v=a|Dv|^\gamma\in L^s_{\mathrm{loc}}$.  Interior
Calder\'on--Zygmund estimates yield $ v\in W^{2,s}_{\mathrm{loc}}(\mathbb R^d)$\text{ for some }$s>d$.
Thus, all the regularity hypotheses of the superquadratic Liouville theorem
\cite[Lemma~2.5]{CirantVerzini2022} are satisfied, which  implies that $v$ is constant.
\end{proof}

\section{Uniformly H\"{o}lder vanishing estimate} \label{Section3}

Lemma~\ref{lem:individual-little} yields the vanishing of the 
H\"older quotient for each fixed function
$u\in W^{2,q_c}(\mathbb T^d)$.  In this section, we strengthen
\eqref{eq:individual-little} to a uniform estimate over the family of
solutions under consideration.

Let $\iota_0>0$ denote the injectivity radius of the flat torus.  For
$u\in W^{2,q_c}(\mathbb T^d)$ and $0<r<\iota_0$, define
\begin{equation}\label{eq:exact-holder-profile}
 H_u(r):=
 \max_{\operatorname{dist}(x,y)=r}
 \frac{|u(x)-u(y)|}{r^{\alpha_c}},
\end{equation}
and set
\begin{equation}\label{eq:theta}
 \Theta_u(r):=\max_{0<t\le r}H_u(t).
\end{equation}
The maxima shown above are attained.  Indeed, for $r<\iota_0$, every pair at
distance $r$ can be written uniquely as
$y=\exp_x(r\omega)$ with $x\in\mathbb T^d$ and
$\omega\in\mathbb S^{d-1}$.  Hence
\[
 H_u(r)=
 \max_{(x,\omega)\in\mathbb T^d\times\mathbb S^{d-1}}
 \frac{|u(\exp_x(r\omega))-u(x)|}{r^{\alpha_c}}.
\]
The maximized function above is continuous jointly in
$(r,x,\omega)$ and the parameter space
$\mathbb T^d\times\mathbb S^{d-1}$ is compact.  Therefore, $H_u$ is
continuous on $(0,\iota_0)$.  Lemma~\ref{lem:individual-little} shows that
$H_u(r)\to0$ as $r\downarrow0$; after setting $H_u(0)=0$, it is continuous
on $[0,\iota_0)$.  $\Theta_u$ is consequently
continuous and non-decreasing.  Hence, the two maxima can be attained.

Now, assume, by contradiction, that the desired conclusion for uniformly vanishing property fails.  Then
there exist $\delta>0$, $u_n\in\mathcal U$, and $s_n\downarrow0$ such that 
 $s_n<\iota_0$ and $\Theta_{u_n}(s_n)\ge\delta.$
Fix  $0<\varepsilon<\delta/2$.  Since
$\Theta_{u_n}(0)=0$, define
\begin{equation}\label{eq:first-crossing-definition}
 r_n:=\min\bigl\{r\in[0,s_n]:
                 \Theta_{u_n}(r)=\varepsilon\bigr\}.
\end{equation}
Continuity and monotonicity give $0<r_n\le s_n,$ $\Theta_{u_n}(r_n)=\varepsilon$ and $\Theta_{u_n}(r)<\varepsilon\quad(0\le r<r_n).$ Noting that the maximum defining $\Theta_{u_n}(r_n)$ cannot occur at a radius
$t<r_n$,  otherwise it  would give $\Theta_{u_n}(t)=\varepsilon$ and
contradict the minimality in \eqref{eq:first-crossing-definition}.  Hence
$H_{u_n}(r_n)=\varepsilon$.  By \eqref{eq:exact-holder-profile}, there are
$x_n,y_n\in\mathbb T^d$ such that $\operatorname{dist}(x_n,y_n)=r_n$,
 $|u_n(x_n)-u_n(y_n)|=\varepsilon r_n^{\alpha_c}.$
In particular, $r_n\downarrow0$.  The contradiction argument below will require the coefficient $ \lambda=\varepsilon^{\gamma-1}$
in the rescaled equation to be sufficiently small.  We postpone the final
choice of $\varepsilon$ until the universal threshold $\lambda_*>0$ is
obtained in Section~\ref{sec:first-compactness}.  Notice, however, that the
following first crossing construction is valid for every
\begin{align}\label{eps-small}
 0<\varepsilon<\frac{\delta}{2}.
\end{align}
Consequently, once $\lambda_*$ is known, we may choose
\[
 0<\varepsilon<
 \min\left\{\frac{\delta}{2},\lambda_*^{1/(\gamma-1)}\right\}
\]
and apply the construction below with this choice of $\varepsilon$.

For all large $n$, let $\zeta_n\in T_{x_n}\mathbb T^d\simeq\mathbb R^d$
be the unique vector with
$y_n=\exp_{x_n}(\zeta_n)$ and $|\zeta_n|=r_n$, and define
\[
 z_n:=\frac{\zeta_n}{r_n},
 \qquad |z_n|=1.
\]
Using the flat exponential coordinates based at $x_n$, introduce the
rescaling:
\begin{equation}\label{eq:first-rescaling}
 v_n(y)=\frac{u_n(x_n+r_ny)-u_n(x_n)}{\varepsilon r_n^{\alpha_c}},
 \qquad
 g_n(y)=\varepsilon^{-1}r_n^{\gamma'}f_n(x_n+r_ny).
\end{equation}
With (\ref{eq:first-rescaling}), (\ref{eq:intro-HJ}) gives us
\begin{equation*}
 -\Delta v_n+\lambda|Dv_n|^\gamma=g_n\text{ in }\mathbb T_n^d,
 \qquad \lambda=\varepsilon^{\gamma-1},
\end{equation*}
where $
 \mathbb T_n^d:=r_n^{-1}(\mathbb T^d-x_n).$
Moreover,
\begin{equation}\label{eq:first-normalization}
 v_n(0)=0,\qquad |v_n(z_n)|=1,
\end{equation}
and, whenever $|y-z|\le1$,
\begin{equation}\label{eq:unit-holder}
 |v_n(y)-v_n(z)|\le|y-z|^{\alpha_c}.
\end{equation}

We next show the vanishing of $g_n$ after translation and scaling, which is
\begin{lemma}\label{lem:data-vanish}
For every fixed $R>0$,
\begin{equation*}
 g_n\rightarrow0
 \quad\text{ in }L^{q_c}(B_R).
\end{equation*}
Moreover, for every fixed $L>0$,
\begin{equation}\label{eq:secondary-data-vanish}
 \sup_{\substack{\xi\in\mathbb T_n^d\\0<\rho\le1}}
 \bigl\|\rho^{\gamma'}g_n(\xi+\rho\,\cdot)\bigr\|_{L^{q_c}(B_L)}
\rightarrow0.
\end{equation}
\end{lemma}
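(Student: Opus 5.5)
The plan is a change of variables combined with the equi-integrability modulus \eqref{eq:UInt}. Since $\gamma' q_c=d$, we have, for any measurable $A\subset\R^d$ (with $r_n$ small so that $x_n+r_nA$ embeds in $\mathbb T^d$),
\[
\int_A |g_n|^{q_c}\,dy=\varepsilon^{-q_c}r_n^{\gamma' q_c}\int_A |f_n(x_n+r_ny)|^{q_c}\,dy=\varepsilon^{-q_c}\int_{x_n+r_nA}|f_n|^{q_c}\,dx .
\]
Taking $A=B_R$, the image set has measure $|B_1|R^d r_n^d\to0$. Hence
\[
\|g_n\|_{L^{q_c}(B_R)}^{q_c}\le \varepsilon^{-q_c}\,\omega_{\mathcal F}(|B_1|R^dr_n^d).
\]
This tends to $0$ as $n\to\infty$, because $\varepsilon$ is fixed, $r_n\downarrow0$, and $\omega_{\mathcal F}(s)\to0$ as $s\downarrow0$. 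For large $n$ the ball $B_{Rr_n}(x_n)$ lies inside a fundamental domain, so the periodic identification is injective and the measure is computed exactly. In any case, measure only decreases under the projection, so the bound holds regardless.

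For \eqref{eq:secondary-data-vanish}, compose the two scalings. For $\xi\in\mathbb T_n^d$ and $0<\rho\le1$, apply the same identity to $\rho^{\gamma'}g_n(\xi+\rho\,\cdot)$, using $\gamma' q_c=d$ again:
\[
\bigl\|\rho^{\gamma'}g_n(\xi+\rho\,\cdot)\bigr\|_{L^{q_c}(B_L)}^{q_c}=\int_{B_{\rho L}(\xi)}|g_n|^{q_c}\,dy=\varepsilon^{-q_c}\int_{x_n+r_nB_{\rho L}(\xi)}|f_n|^{q_c}\,dx.
\]
The set $x_n+r_nB_{\rho L}(\xi)$, viewed on $\mathbb T^d$, has measure at most $|B_1|L^d r_n^d$, uniformly in $\xi$ and in $\rho\le1$. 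Therefore the supremum over $\xi$ and $\rho$ is bounded by $\varepsilon^{-1}\omega_{\mathcal F}(|B_1|L^dr_n^d)^{1/q_c}$, which tends to $0$.

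The only delicate point is the uniformity in the base point $\xi$ and in the scale $\rho$. This is exactly where the uniform modulus $\omega_{\mathcal F}$ is needed, rather than the absolute continuity of a single $f$. The constraint $\rho\le1$ is what gives the uniform measure bound. Since the modulus is taken over all sets $E$ with $|E|\le s$ and all $f\in\mathcal F$, no further compactness argument is required.
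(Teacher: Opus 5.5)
Your proposal is correct and matches the paper's proof: the change of variables with $\gamma' q_c=d$ turns both norms into $\varepsilon^{-q_c}\int|f_n|^{q_c}$ over a ball of radius at most $L r_n$ (resp.\ $R r_n$), and the uniform modulus $\omega_{\mathcal F}$ then gives decay uniformly in $\xi$ and $\rho\le 1$. One caveat: your aside that the bound ``holds regardless'' because measure decreases under projection is not right, since a ball that wraps around $\mathbb T^d$ counts overlaps with multiplicity. This is harmless, because for large $n$ these radii are below the injectivity radius uniformly in $\xi,\rho$, and only large $n$ matter for the limit.
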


\begin{proof}
We have from \eqref{eq:first-rescaling} and (\ref{eq:UInt}) that
\[
 \|g_n\|_{L^{q_c}(B_R)}^{q_c}
 =\varepsilon^{-q_c}
   \int_{B_{Rr_n}(x_n)}|f_n|^{q_c}\,d x
 \le\varepsilon^{-q_c}\omega_{\mathcal F}(|B_R|r_n^d)\longrightarrow0.
\]
Moreover, a direct computation implies
\begin{align*}
 &\bigl\|\rho^{\gamma'}
        g_n(\xi+\rho\,\cdot)\bigr\|_{L^{q_c}(B_L)}^{q_c}\\
 &\qquad={}
 \varepsilon^{-q_c}
 \int_{B_{Lr_n\rho}(x_n+r_n\xi)}|f_n|^{q_c}\,d x
 \le
 \varepsilon^{-q_c}\omega_{\mathcal F}(|B_L|(r_n\rho)^d)
 \le
 \varepsilon^{-q_c}\omega_{\mathcal F}(|B_L|r_n^d)
 \longrightarrow0.
\end{align*}
\end{proof}

We note that Theorem~\ref{thm:critical-holder} also yields
\begin{equation}\label{eq:first-global-holder}
 [v_n]_{C^{0,\alpha_c}(\mathbb T_n^d)}
 \le \frac{K}{\varepsilon}.
\end{equation}
In particular, since $v_n(0)=0$, for every fixed $R>0$, we have
\[
 \|v_n\|_{L^\infty(B_R)}
 \le \frac{K}{\varepsilon}R^{\alpha_c}.
\]

Our next goal is to establish the local compactness of $v_n$.  To this
end, in the following subsection we prove the uniform equi-integrability of
the local energy.

\subsection{ Uniformly local energy bound}

For a function $v\in W^{2,q_c}(G)$ and a measurable set $E\subset G$, set
\begin{equation}\label{eq:energy}
 \mathcal E(v;E)=
 \int_E\left(|D^2v|^{q_c}+|Dv|^{\gamma q_c}\right)\,d x.
\end{equation}
Then we prove the following energy estimate.
\begin{proposition}
\label{prop:energy-bound}
Define  $\mathbb T_n^d=r_n^{-1}(\mathbb T^d-x_n)$ and  suppose
$v_n\in W^{2,q_c}(\mathbb T_n^d)$ satisfies
\begin{equation}\label{eq:energy-prop-equation}
 -\Delta v_n+\lambda_n|Dv_n|^\gamma=g_n,
 \qquad 0\le\lambda_n\le1,
\end{equation}
where $g_n\to0$  in $L^{q_c}_{\mathrm{loc}}(\R^d)$, and assume
\eqref{eq:unit-holder} holds on $\mathbb T_n^d$.  Suppose that 
\begin{equation}\label{eq:abstract-secondary-vanish}
 \sup_{\substack{\xi\in\mathbb T_n^d\\0<\rho\le1}}
 \bigl\|\rho^{\gamma'}g_n(\xi+\rho\,\cdot)\bigr\|_{L^{q_c}(B_L)}
 \longrightarrow0
\end{equation}
for every fixed $L$.  Then,  \begin{equation}\label{eq:energy-bound}
 \sup_n\mathcal E(v_n;B_R)<\infty.
\end{equation}

Moreover, there is a constant
$M_0=M_0(d,\gamma)$ such that, for every fixed $R$, all sufficiently large
$n$ satisfy
\begin{equation}\label{eq:universal-tail-energy}
 \sup_{x\in B_R}\mathcal E(v_n;B_2(x))\le M_0.
\end{equation}
\end{proposition}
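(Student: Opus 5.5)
We argue by contradiction through a second blow-up, using the scale-invariant Gagliardo--Nirenberg inequality of Lemma~\ref{lem:critical-GN} and the local Calder\'on--Zygmund estimate to absorb the nonlinear term wherever the local energy is small.

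\textbf{Step 1: a small-energy $\varepsilon$-regularity estimate.} On any ball $B_\rho(x)$ with $\rho\le 2$, the bound \eqref{eq:unit-holder} gives $[v_n]_{C^{0,\alpha_c}(B_\rho(x))}\le 1$. Interior Calder\'on--Zygmund estimates for $-\Delta v_n=g_n-\lambda_n|Dv_n|^\gamma$, combined with \eqref{eq:critical-GN}, give
\[
\|D^2v_n\|_{L^{q_c}(B_{\rho/2})}\le C\bigl(\|g_n\|_{L^{q_c}(B_\rho)}+\||Dv_n|^\gamma\|_{L^{q_c}(B_\rho)}+\rho^{\alpha_c-2}\|v_n-v_n(x)\|_{L^{q_c}(B_\rho)}\bigr).
\]
The last term is bounded by a universal constant, because the $C^{0,\alpha_c}$ bound and the scaling $q_c(\alpha_c-2)+d=0$ make it scale-invariant. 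The nonlinear term $\||Dv_n|^\gamma\|_{L^{q_c}}=\|Dv_n\|_{L^{\gamma q_c}}^\gamma$ is controlled by $C\|D^2v_n\|_{L^{q_c}}+C$, but on the larger ball. This mismatch of radii is exactly why a naive absorption fails: the constant in front of $\|D^2v_n\|$ is not small, since the H\"older seminorm is only bounded by $1$, not small.

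\textbf{Step 2: second blow-up via a maximal-energy point.} Suppose that \eqref{eq:energy-bound} fails, or that \eqref{eq:universal-tail-energy} fails for every $M_0$ chosen large. Take $\mathcal E(v_n;B_1(x))$ maximal-type quantities and choose, by a point-picking / doubling argument, centers $\xi_n$ and radii $\rho_n\to0$ at which the scaled energy $\mathcal E(v_n;B_{\rho_n}(\xi_n))$ equals a fixed threshold $\eta_0$ while all balls of radius $\rho_n$ nearby carry energy at most $\eta_0$. The quantity $\mathcal E$ is scale-invariant under $w_n(y)=\rho_n^{-\alpha_c}(v_n(\xi_n+\rho_n y)-v_n(\xi_n))$. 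The rescaled functions satisfy
\[
-\Delta w_n+\lambda_n|Dw_n|^\gamma=\rho_n^{\gamma'}g_n(\xi_n+\rho_n\cdot),
\]
which tends to $0$ in $L^{q_c}(B_L)$ by \eqref{eq:abstract-secondary-vanish}. They also satisfy $[w_n]_{C^{0,\alpha_c}}\le1$ on unit scales and have locally bounded energy, since every unit ball carries energy at most $\eta_0$ and $B_L$ is covered by $CL^d$ of them.

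\textbf{Step 3: compactness and contradiction.} Pass to limits: $w_n\to w$ locally uniformly (Arzel\`a--Ascoli) and weakly in $W^{2,q_c}_{\rm loc}$, and $\lambda_n\to\lambda_\infty$. To identify the limit equation and to keep the energy $\eta_0$ from vanishing, strong convergence of $Dw_n$ in $L^{\gamma q_c}_{\rm loc}$ is needed. Here the small drift enters. With $\mathbf b_n=\lambda_n|Dw_n|^{\gamma-2}Dw_n$, we have $\|\mathbf b_n\|_{L^d(B_r)}^{d}\le\mathcal E(w_n;B_r)$, which is small by the choice of $\eta_0$ (taking $\eta_0\le\kappa^d$). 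Lemma~\ref{lem:small-drift}(ii) then upgrades $w_n$ to a uniform bound in $W^{2,p}_{\rm loc}$ for some $p>q_c$ with $p<d$, modulo the data $h_n\to0$ being only in $L^{q_c}$. To handle this, split $w_n$ into the solution with data $h_n$, which is small in $W^{2,q_c}$ by Lemma~\ref{lem:small-drift}(i), and a remainder bounded in $W^{2,p}$. Rellich compactness then yields strong convergence of $D^2w_n$ in $L^{q_c}_{\rm loc}$ only weakly, but convergence of $Dw_n$ is strong in $L^{\gamma q_c}_{\rm loc}$ since $p^*>\gamma q_c$. The limit $w$ is therefore an entire $C^{0,\alpha_c}$ solution of $-\Delta w+\lambda_\infty|Dw|^\gamma=0$, so $w$ is constant by Lemma~\ref{lem:endpoint-liouville}.

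Nonvanishing of the energy then contradicts constancy. The gradient part converges strongly and tends to $0$. The Hessian part on $B_{1}$ is controlled by the Calder\'on--Zygmund estimate of Step~1 using $\|w_n-c\|_{L^\infty}\to0$ and the gradient term, so it also tends to $0$, contradicting $\mathcal E(w_n;B_1)=\eta_0$. If instead no concentration occurs at scales $\rho_n\to0$, the energy is bounded at unit scale by a covering argument, which gives \eqref{eq:energy-bound} and \eqref{eq:universal-tail-energy} with $M_0$ depending only on $d,\gamma$ through $\eta_0$ and the covering constant.

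\textbf{Main obstacle.} I expect the hard part to be the point-picking in Step~2: guaranteeing simultaneously a nontrivial energy threshold at the center and uniform smallness on all neighbouring balls. The fallback is a Struwe-type concentration-function argument, choosing $\rho_n$ as the largest radius at which $\sup_x\mathcal E(v_n;B_{\rho}(x))\le\eta_0$.
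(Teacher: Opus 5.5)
Your argument for \eqref{eq:energy-bound} works. It differs from the paper's only in the compactness step. You split off the Dirichlet solution with data $k_n$, bound the remainder in $W^{2,p}$ for some $q_c<p<d$, and use Rellich with $p^*>\gamma q_c$ to get $Dw_n\to Dw$ in $L^{\gamma q_c}_{\mathrm{loc}}$. The vanishing of the Hessian then follows via Calder\'on--Zygmund once Liouville makes the limit constant. The splitting needs the Dirichlet isomorphism built in the proof of Lemma~\ref{lem:small-drift}(ii); part (i) is only an a priori estimate. The paper is shorter: it applies Lemma~\ref{lem:small-drift}(i) to the differences $w_n-w_m$, which solve a linear equation with small drift, so $(w_n)$ is Cauchy in $W^{2,q_c}_{\mathrm{loc}}$ without any higher integrability. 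Also, \eqref{eq:unit-holder} rescales to a H\"older bound on scales up to $1/\rho_n$, not just unit scales. This, and hence $\rho_n\to0$, is what makes the limit globally $\alpha_c$-H\"older so that Lemma~\ref{lem:endpoint-liouville} applies.

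The genuine gap is the second assertion, that $M_0$ depends only on $d$ and $\gamma$. Your closing dichotomy does not give this.

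\begin{itemize}
\item If the concentration radius $\rho_n$ (the largest $\rho$ with $\sup_x\mathcal E(v_n;B_\rho(x))\le\eta_0$) does not tend to $0$, you only know $\rho_n\ge\rho_*>0$, with $\rho_*$ depending on the sequence. The covering then gives $\mathcal E(v_n;B_2(x))\le C_d\rho_*^{-d}\eta_0$, which is not universal.
\item You cannot exclude $\rho_n\to\rho_\infty\in(0,1)$ with the same blow-up. The rescaled H\"older bound then holds only up to scale $1/\rho_\infty$, so the limit need not lie in $C^{0,\alpha_c}(\mathbb R^d)$ and Liouville is unavailable. For $\lambda_\infty=0$, a nonconstant affine function with small slope satisfies all the limiting constraints.
\item A single sequence violating one fixed bound does not force $\rho_n\to0$.
\end{itemize}

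What is missing is the paper's diagonal argument over admissible sequences. Suppose that for each $j$ some admissible sequence has $\sup_{\xi\in B_{R_j}}\mathcal E(v^{(j)}_n;B_2(\xi))>j$ for infinitely many $n$. Pick $n_j$ so that the quantity in \eqref{eq:abstract-secondary-vanish} is at most $1/j$ for $L\le j$ and the torus is large, then recentre at the bad point. The resulting sequence $\tilde v_j$ is admissible and has $\mathcal E(\tilde v_j;B_2)>j$, contradicting the already proved \eqref{eq:energy-bound} with $R=2$.

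This uniformity is essential. The constant $\lambda_*$ in Proposition~\ref{prop:first-strong} is built from $M_0$, and $\varepsilon$ is fixed before the first blow-up sequence is constructed. A sequence-dependent $M_0$ would therefore make the argument circular.
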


\begin{proof}
Assume that the conclusion (\ref{eq:energy-bound}) fails.  After passing to a subsequence, there is
a fixed ball $B_R$ on which the energies $\mathcal E(v_n;B_R)\rightarrow\infty$.  Fix a small
number $\eta>0$, to be chosen below.  Define, for radii $0<\rho\le1$,
\[
 Q_n(\rho)=
 \sup_{x\in\mathbb T_n^d}\mathcal E(v_n;B_\rho(x)).
\]
For each fixed $n$, uniform absolute continuity on $\mathbb T_n^d$ gives
$Q_n(\rho)\to0$ as $\rho\downarrow0$.  The function $Q_n$ is continuous:
translations and scalings are continuous in $L^1$ for the integrable energy density, uniformly over the compact set of centres.  A finite
covering of $B_R$ shows that $Q_n(1)\to\infty$.  Consequently, there are
$\rho_n\in(0,1)$ and $\xi_n\in\mathbb T_n^d$ such that
\begin{equation}\label{eq:energy-normalization}
 Q_n(\rho_n)=\eta,\qquad
 \mathcal E(v_n;B_{\rho_n}(\xi_n))\ge\eta/2.
\end{equation}
The radii satisfy $\rho_n\to0$: if not, then $\rho_n\ge\rho_*>0$ along a
subsequence, then $Q_n(\rho_*)\le\eta$, and a fixed finite covering of
$B_R$ by $\rho_*$-balls would give a uniform bound for its energy, which is a contradiction with 
 the choice of $B_R$.

Define the   blow-up sequence
\begin{equation}\label{eq:second-rescaling}
 w_n(y)=
 \frac{v_n(\xi_n+\rho_ny)-v_n(\xi_n)}{\rho_n^{\alpha_c}},
 \qquad
 k_n(y)=\rho_n^{\gamma'}g_n(\xi_n+\rho_ny).
\end{equation}
By using (\ref{eq:energy-prop-equation}), we find it solves
\begin{equation}\label{eq:second-equation}
 -\Delta w_n+\lambda_n|Dw_n|^\gamma=k_n.
\end{equation}
For every fixed $L$, a finite covering by unit balls and
\eqref{eq:energy-normalization} give
\begin{equation}\label{eq:second-energy-bound}
 \sup_n\mathcal E(w_n;B_L)\le C_L\eta,
 \qquad
 \mathcal E(w_n;B_1)\ge\eta/2.
\end{equation}
Furthermore, $k_n\to0$  in $L^{q_c}_{\mathrm{loc}}$ by
\eqref{eq:abstract-secondary-vanish}.  
  Since $\rho_n\to0$, \eqref{eq:unit-holder} becomes, for each
fixed $L$ and all large $n$,
\begin{equation}\label{eq:second-global-holder}
 [w_n]_{C^{0,\alpha_c}(B_L)}\le1.
\end{equation}
Hence, a diagonal subsequence converges locally uniformly to a 
$\alpha_c$-H\"older function $w$, and we may assume
$\lambda_n\to\lambda_\infty\in[0,1]$.

It remains to prove strong convergence of $w_n$ when $q=q_c$.  Let 
$z_{nm}=w_n-w_m$ and define
\begin{align}\label{Anm_label}
 A_{nm}(D w_n,Dw_m)=\gamma\int_0^1
 |Dw_m+t(Dw_n-Dw_m)|^{\gamma-2}
 \bigl(Dw_m+t(Dw_n-Dw_m)\bigr)\,d t.
\end{align}
Then, we have from (\ref{eq:second-equation}) that 
\begin{equation}\label{eq:second-difference}
 -\Delta z_{nm}+\mathbf{b}_{nm}\cdot Dz_{nm}=h_{nm},
 \quad \mathbf{b}_{nm}=\lambda_nA_{nm},
 \quad h_{nm}=k_n-k_m+(\lambda_m-\lambda_n)|Dw_m|^\gamma.
\end{equation}
Using $\gamma q_c=d(\gamma-1)$ and
\eqref{eq:second-energy-bound}, we find
\begin{equation}\label{eq:small-second-drift}
 \|\mathbf{b}_{nm}\|_{L^d(B_2)}
 \le C\left(
 \|Dw_n\|_{L^{\gamma q_c}(B_2)}^{\gamma-1}
 +\|Dw_m\|_{L^{\gamma q_c}(B_2)}^{\gamma-1}
 \right)
 \le C\eta^{(\gamma-1)/(\gamma q_c)}.
\end{equation}
Choose $\eta$ so small that the last quantity is less than
$\kappa_{q_c}$ in Lemma \ref{lem:small-drift}.   For every fixed $L$, the first term in $h_{nm}$ satisfies $ \|k_n-k_m\|_{L^{q_c}(B_L)}\rightarrow0.$  Moreover, by \eqref{eq:second-energy-bound}, $ \bigl\||Dw_m|^\gamma\bigr\|_{L^{q_c}(B_L)}
 \le C_L\eta^{1/q_c},$
whereas $\lambda_m-\lambda_n\to0$.  Hence, $\|h_{nm}\|_{L^{q_c}(B_L)}\rightarrow0.$
Since $z_{nm}\to0$ locally uniformly, Lemma~\ref{lem:small-drift} 
indicates $w_n\rightarrow w$
in $W^{2,q_c}_{\mathrm{loc}}(\R^d).$ Indeed, cover \(B_L\) by finitely many balls
\(B_1(x_j)\) with \(B_2(x_j)\subset B_{L+2}\).  The normalization
\(Q_n(\rho_n)=\eta\) and a finite covering of each \(B_2(x_j)\) by unit
balls conclude $\mathcal E(w_n;B_2(x_j))\le C_d\eta .$
Hence, choosing \(\eta\) sufficiently small, we have  $\|\mathbf b_{nm}\|_{L^d(B_2(x_j))}\le\kappa_{q_c}.$
Conclusion (i) in Lemma~\ref{lem:small-drift} therefore yields
\[
 \|D^2z_{nm}\|_{L^{q_c}(B_1(x_j))}
 \le C\bigl(
 \|h_{nm}\|_{L^{q_c}(B_2(x_j))}
 +\|z_{nm}\|_{L^{q_c}(B_2(x_j))}\bigr)\to0.
\]
Summing over \(j\), and using \(z_{nm}\to0\) locally uniformly together
with interpolation inequalities, shows that \((w_n)_n\) is a Cauchy sequence in
\(W^{2,q_c}(B_L)\).  Thus,  $w_n\to w$ strongly in  $W^{2,q_c}_{\mathrm{loc}}(\mathbb R^d)$. Moreover, Sobolev inequality with$q_c^*=\gamma q_c$ implies
$Dw_n\to Dw$ in $L^{\gamma q_c}_{\mathrm{loc}}$.

Now, we pass to the limit in \eqref{eq:second-equation} and obtain
\[
 -\Delta w+\lambda_\infty|Dw|^\gamma=0\quad\text{in }\R^d.
\]
By \eqref{eq:second-global-holder}, $w$ has a finite 
$C^{0,\alpha_c}$ semi-norm. With the aid of  Lemma \ref{lem:endpoint-liouville}, we have $w$ is a  constant.
On the other hand, strong convergence of $w_n$ and
\eqref{eq:second-energy-bound} imply
$\mathcal E(w;B_1)\ge\eta/2$, which is a contradiction.  Thus
\eqref{eq:energy-bound} holds.

It remains to prove that the constant in
\eqref{eq:universal-tail-energy} can be chosen uniformly.  Suppose this is false.  Then, for every integer
$j\ge1$, there exist a sequence $\bigl(v_n^{(j)},g_n^{(j)},\lambda_n^{(j)},
       \mathbb T_n^{(j),d}\bigr)_{n\in\mathbb N}$
satisfying all the hypotheses of the proposition, a radius $R_j>0$, and
infinitely many indices $n$ such that
\begin{equation}\label{eq:universal-failure}
 \sup_{\xi\in B_{R_j}}
 \mathcal E\bigl(v_n^{(j)};B_2(\xi)\bigr)>j.
\end{equation}
For each $j$, choose one such index $n_j\ge j$ sufficiently large that the
injectivity radius of $\mathbb T_{n_j}^{(j),d}$ is larger than $j+4$ and
\begin{equation}\label{eq:diagonal-data-small}
 \sup_{\substack{\xi\in\mathbb T_{n_j}^{(j),d}\\0<\rho\le1}}
 \left\|\rho^{\gamma'}g_{n_j}^{(j)}
            (\xi+\rho\,\cdot)\right\|_{L^{q_c}(B_L)}
 \le\frac1j
 \quad\text{for every integer }1\le L\le j.
\end{equation}
By \eqref{eq:universal-failure}, choose $\xi_j\in B_{R_j}$ such that
\begin{equation}\label{eq:diagonal-high-energy}
 \mathcal E\bigl(v_{n_j}^{(j)};B_2(\xi_j)\bigr)>j.
\end{equation}
 Define
\[
 \widetilde{\mathbb T}_j^d
 :=\mathbb T_{n_j}^{(j),d}-\xi_j,
 \qquad
 \widetilde v_j(y)
 :=v_{n_j}^{(j)}(\xi_j+y)-v_{n_j}^{(j)}(\xi_j),
\]
\[
 \widetilde g_j(y):=g_{n_j}^{(j)}(\xi_j+y),
 \qquad
 \widetilde\lambda_j:=\lambda_{n_j}^{(j)},
\]
then
\[
 -\Delta\widetilde v_j
 +\widetilde\lambda_j|D\widetilde v_j|^\gamma
 =\widetilde g_j
 \quad\text{in }\widetilde{\mathbb T}_j^d,
 \qquad 0\le\widetilde\lambda_j\le1.
\]
We have  $\bigl(\widetilde v_j,\widetilde g_j,
       \widetilde\lambda_j,\widetilde{\mathbb T}_j^d\bigr)$ satisfies all conditions where the first part of the proof
applies.  Taking $R=2$ in \eqref{eq:energy-bound} gives $ \sup_j\mathcal E(\widetilde v_j;B_2)<\infty.$
This contradicts \eqref{eq:diagonal-high-energy}, which leads to
$\mathcal E(\widetilde v_j;B_2)>j$.  Therefore, the  constant
$M_0=M_0(d,\gamma)$ exists, and \eqref{eq:universal-tail-energy} follows.
\end{proof}

\subsection{Strong compactness and the equi-integrability of gradient estimates}
\label{sec:first-compactness}
The energy bound yields only weak compactness for the first blow-up
sequence.  The small coefficient $\lambda=\varepsilon^{\gamma-1}$
arising from the first rescaling allows us to upgrade this weak compactness
to strong local compactness.
\begin{proposition} 
\label{prop:first-strong}
Suppose that there exists a constant $\lambda_*=\lambda_*(d,\gamma)>0$  such that  $v_n$ satisfies the hypotheses of Proposition
\ref{prop:energy-bound}  with 
$0<\lambda\le\lambda_*$.  Then, after passing to a subsequence,
\begin{equation}\label{eq:first-strong}
 v_n\rightarrow v
 \quad\text{in }W^{2,q_c}_{\mathrm{loc}}(\R^d),
 \qquad
 Dv_n\rightarrow Dv
 \quad\text{ in }L^{\gamma q_c}_{\mathrm{loc}}(\R^d),
\end{equation}
where $v_n(0)=0.$
\end{proposition}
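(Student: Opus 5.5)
The plan is to mimic the strong-convergence step in the proof of Proposition~\ref{prop:energy-bound}, with the smallness of the drift now supplied by the small coefficient $\lambda$ rather than by a small energy $\eta$.

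\emph{Step 1: weak compactness and uniform convergence.} By \eqref{eq:unit-holder} and $v_n(0)=0$, the sequence $(v_n)$ is locally bounded and locally equi-H\"older. Arzel\`a--Ascoli with a diagonal argument gives a subsequence converging locally uniformly to some $v\in C^{0,\alpha_c}_{\mathrm{loc}}$. By \eqref{eq:energy-bound}, $\sup_n\mathcal E(v_n;B_R)<\infty$ for every $R$, so we may also assume $v_n\rightharpoonup v$ weakly in $W^{2,q_c}_{\mathrm{loc}}$.

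\emph{Step 2: the difference equation and a small drift.} Set $z_{nm}=v_n-v_m$. As in \eqref{eq:second-difference}, with constant $\lambda$ (so no $(\lambda_m-\lambda_n)$ term appears), we get
\[
 -\Delta z_{nm}+\mathbf b_{nm}\cdot Dz_{nm}=g_n-g_m,\qquad \mathbf b_{nm}=\lambda A_{nm}(Dv_n,Dv_m).
\]
Fix any $x_0$. By \eqref{eq:universal-tail-energy} and $\gamma q_c=d(\gamma-1)$, for $n,m$ large we have
\[
 \|\mathbf b_{nm}\|_{L^d(B_2(x_0))}\le C\lambda\bigl(\|Dv_n\|^{\gamma-1}_{L^{\gamma q_c}(B_2(x_0))}+\|Dv_m\|^{\gamma-1}_{L^{\gamma q_c}(B_2(x_0))}\bigr)\le 2C\lambda M_0^{(\gamma-1)/(\gamma q_c)}.
\]
The constants $C$ and $M_0$ depend only on $d,\gamma$. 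We therefore define
\[
 \lambda_*:=\frac{\kappa_{q_c}}{2C M_0^{(\gamma-1)/(\gamma q_c)}}.
\]
Then $\lambda\le\lambda_*$ forces $\|\mathbf b_{nm}\|_{L^d(B_2(x_0))}\le\kappa_{q_c}$ uniformly in $x_0\in B_R$ and in large $n,m$. This is the key point, and the one I expect to be the main obstacle: the smallness must come from a \emph{universal} bound. Merely finite energies from \eqref{eq:energy-bound} would not do, since they depend on $R$ and on the sequence. This is exactly why the second part of Proposition~\ref{prop:energy-bound}, with $M_0=M_0(d,\gamma)$, is needed, and why $\lambda_*$, and hence $\varepsilon$, can be fixed before the sequence is chosen.

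\emph{Step 3: Cauchy property and conclusion.} Apply Lemma~\ref{lem:small-drift}(i) with $p=q_c$ on each ball $B_2(x_j)$ of a finite cover of $B_L$ by unit balls $B_1(x_j)$ (after translation; the scale is already unit, so no rescaling is needed). This gives
\[
 \|D^2 z_{nm}\|_{L^{q_c}(B_1(x_j))}\le C\bigl(\|g_n-g_m\|_{L^{q_c}(B_2(x_j))}+\|z_{nm}\|_{L^{q_c}(B_2(x_j))}\bigr).
\]
The first term tends to $0$ because $g_n\to0$ in $L^{q_c}_{\mathrm{loc}}$ (Lemma~\ref{lem:data-vanish}). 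The second tends to $0$ by local uniform convergence. Summing over $j$ and interpolating for the first derivatives shows that $(v_n)$ is Cauchy in $W^{2,q_c}(B_L)$ for every $L$, so $v_n\to v$ in $W^{2,q_c}_{\mathrm{loc}}$. Finally, the Sobolev embedding $W^{2,q_c}\hookrightarrow W^{1,q_c^*}$ with $q_c^*=\gamma q_c$ gives $Dv_n\to Dv$ in $L^{\gamma q_c}_{\mathrm{loc}}$, and $v(0)=0$ passes to the limit.
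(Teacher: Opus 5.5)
Your proposal is correct and follows the paper's proof essentially step for step. The shared steps are: local uniform convergence from \eqref{eq:unit-holder}; the difference equation with drift $\mathbf b_{nm}=\lambda A_{nm}(Dv_n,Dv_m)$; the bound $\|\mathbf b_{nm}\|_{L^d(B_2(x))}\le C\lambda M_0^{1/d}$ from the universal tail bound \eqref{eq:universal-tail-energy} (your exponent $(\gamma-1)/(\gamma q_c)$ equals $1/d$); the choice of $\lambda_*$ in terms of $\kappa_{q_c}$ and $M_0$; and Lemma~\ref{lem:small-drift}(i) on a finite cover of unit balls to get the Cauchy property, with Sobolev giving the gradient convergence. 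The only differences are cosmetic: you add an unneeded weak $W^{2,q_c}_{\mathrm{loc}}$ compactness step, and the paper takes $\lambda_*=\min\{1,\kappa_{q_c}/(4C_\gamma M_0^{1/d})\}$ rather than your factor $2$ without the minimum with $1$.
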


\begin{proof}
\(\eqref{eq:unit-holder}\) together with $v_n(0)=0$ yields uniform bounds
for $v_n$ on every fixed ball.  Hence, up to a subsequence,
\[
 v_n\rightarrow v
 \qquad\text{locally uniformly in }\mathbb R^d.
\]
For $L>0$, we apply   Proposition~\ref{prop:energy-bound}   with $R=L+2$ and obtain that for all sufficiently large $n$,
\begin{equation}\label{eq:first-energy-on-compact}
 \sup_{x\in B_{L+2}}
 \mathcal E(v_n;B_2(x))\le M_0.
\end{equation}
Set $z_{nm}:=v_n-v_m$.  Subtracting the two equations yields
\begin{equation}\label{eq:first-linearized-difference}
 -\Delta z_{nm}+\mathbf{b}_{nm}\cdot Dz_{nm}=g_n-g_m,
 \qquad
 \mathbf{b}_{nm}=\lambda A_{nm}(Dv_n,Dv_m),
\end{equation}
where $A_{nm}$ is defined in \eqref{Anm_label}.  Since $|A_{nm}|
 \le C_\gamma\bigl(|Dv_n|^{\gamma-1}
                    +|Dv_m|^{\gamma-1}\bigr)$
and $d(\gamma-1)=\gamma q_c$, we obtain that for every
$x\in B_L$,
\begin{align}
 \|\mathbf b_{nm}\|_{L^d(B_2(x))}
 &\le C_\gamma\lambda
 \left(
 \|Dv_n\|_{L^{\gamma q_c}(B_2(x))}^{\gamma-1}
 +\|Dv_m\|_{L^{\gamma q_c}(B_2(x))}^{\gamma-1}
 \right)\notag\\
 &\le 2C_\gamma\lambda M_0^{1/d}.
 \label{eq:first-uniform-drift-smallness}
\end{align}
Choose $\lambda_*:=
 \min\left\{1,\frac{\kappa_{q_c}}
 {4C_\gamma M_0^{1/d}}\right\},$
then \eqref{eq:first-uniform-drift-smallness} is strictly below
$\kappa_{q_c}$ whenever $0<\lambda\le\lambda_*$.  Since
$g_n-g_m\to0$ in $L^{q_c}$ on every fixed ball and
$z_{nm}\to0$ locally uniformly, we invoke conclusion (i) in Lemma~\ref{lem:small-drift} and obtain
\[
 \|D^2z_{nm}\|_{L^{q_c}(B_1(x))}
 \le C\left(
 \|g_n-g_m\|_{L^{q_c}(B_2(x))}
 +\|z_{nm}\|_{L^{q_c}(B_2(x))}
 \right)\longrightarrow0.
\]
A finite covering of $B_L$ by the unit balls proves that $(v_n)_n$ is a 
Cauchy sequence in $W^{2,q_c}(B_L)$.  Since $L$ is arbitrary,
$v_n\to v$   in $W^{2,q_c}_{\mathrm{loc}}$.  Finally,  
Sobolev inequality and $q_c^*=\gamma q_c$ imply
$Dv_n\to Dv$   in $L^{\gamma q_c}_{\mathrm{loc}}$.
\end{proof}

We now select $\varepsilon$ in \eqref{eps-small} so small that
\begin{equation}\label{eq:epsilon-final}
 \varepsilon^{\gamma-1}\le\lambda_*.
\end{equation}
Recall that  $\varepsilon$ was used in the beginning of  Section \ref{Section3}, we simply repeat
the first-crossing construction with this final value.



Now, we are ready to establish the uniformly H\"older vanishing estimate of the solution $u$ to (\ref{eq:intro-HJ}).

\begin{theorem} \label{thm:uniform-little}
Under the conditions of Theorem \ref{thm:main}, we have
\begin{equation}\label{eq:uniform-little}
 \lim_{r\downarrow0}
 \sup_{u\in\mathcal U}\ \sup_{0<\text{dist}(x,y)\le r}
 \frac{|u(x)-u(y)|}{\text{dist}(x,y)^{\alpha_c}}=0.
\end{equation}
\end{theorem}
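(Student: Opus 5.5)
We argue by contradiction, following the first-crossing construction set up at the beginning of Section~\ref{Section3}. If \eqref{eq:uniform-little} fails, there are $\delta>0$, $u_n\in\mathcal U$ and $s_n\downarrow0$ with $\Theta_{u_n}(s_n)\ge\delta$. We fix
\[
0<\varepsilon<\min\bigl\{\delta/2,\lambda_*^{1/(\gamma-1)}\bigr\},
\]
where $\lambda_*$ is the constant from Proposition~\ref{prop:first-strong}, so that \eqref{eq:epsilon-final} holds. The first-crossing radii $r_n\downarrow0$ and points $x_n,y_n$ then give the rescaled functions $v_n$, $g_n$ of \eqref{eq:first-rescaling}. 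These solve $-\Delta v_n+\lambda|Dv_n|^\gamma=g_n$ with $\lambda=\varepsilon^{\gamma-1}\le\lambda_*\le 1$, and they satisfy the normalization \eqref{eq:first-normalization} and the unit Hölder bound \eqref{eq:unit-holder}.

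\textbf{Step 1: check the hypotheses.} By Lemma~\ref{lem:data-vanish}, $g_n\to0$ in $L^{q_c}_{\mathrm{loc}}$ and the secondary vanishing condition \eqref{eq:abstract-secondary-vanish} holds. Together with \eqref{eq:unit-holder}, this means all hypotheses of Proposition~\ref{prop:energy-bound} are met. Proposition~\ref{prop:first-strong} then applies and yields, along a subsequence, $v_n\to v$ in $W^{2,q_c}_{\mathrm{loc}}(\mathbb R^d)$ and $Dv_n\to Dv$ in $L^{\gamma q_c}_{\mathrm{loc}}$. In particular $|Dv_n|^\gamma\to|Dv|^\gamma$ in $L^{q_c}_{\mathrm{loc}}$, and $v_n\to v$ locally uniformly.

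\textbf{Step 2: pass to the limit.} Passing to the limit in the equation gives $-\Delta v+\lambda|Dv|^\gamma=0$ a.e. in $\mathbb R^d$. For the global Hölder control, note that $\mathbb T_n^d$ exhausts $\mathbb R^d$ since $r_n\to0$. For $|y-z|\le1$, \eqref{eq:unit-holder} gives $|v(y)-v(z)|\le|y-z|^{\alpha_c}$. For $|y-z|>1$ we would like a bound for all distances. The cleanest route is the global estimate \eqref{eq:first-global-holder}, $[v_n]_{C^{0,\alpha_c}}\le K/\varepsilon$, which is uniform in $n$ and passes to the limit. Hence $v\in W^{2,q_c}_{\mathrm{loc}}\cap C^{0,\alpha_c}(\mathbb R^d)$.

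\textbf{Step 3: contradiction.} Lemma~\ref{lem:endpoint-liouville} with $a=\lambda$ now says $v$ is constant, and $v(0)=\lim v_n(0)=0$ forces $v\equiv0$. On the other hand, $|z_n|=1$, so after a further subsequence $z_n\to z_\infty$ with $|z_\infty|=1$. Local uniform convergence together with the equicontinuity from \eqref{eq:unit-holder} gives $|v(z_\infty)|=\lim|v_n(z_n)|=1$. This contradicts $v\equiv0$, which proves \eqref{eq:uniform-little}.

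\textbf{Main obstacle.} The only genuinely delicate point is making sure Proposition~\ref{prop:first-strong} is legitimately applicable. Its threshold $\lambda_*$ depends only on $d$ and $\gamma$ through $M_0$, and not on $\varepsilon$. This is what lets us fix $\varepsilon$ after knowing $\lambda_*$, without circularity, since the first-crossing construction is valid for every $\varepsilon<\delta/2$. I would state this ordering explicitly. I would also note that the global Hölder bound on $v$ may depend on $\varepsilon$; this is harmless, because Lemma~\ref{lem:endpoint-liouville} only requires finiteness of the seminorm.
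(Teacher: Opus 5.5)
Your proposal is correct and follows the paper's proof essentially step by step. It uses the first-crossing blow-up with $\varepsilon^{\gamma-1}\le\lambda_*$, Lemma~\ref{lem:data-vanish} together with Proposition~\ref{prop:first-strong} for strong $W^{2,q_c}_{\mathrm{loc}}$ convergence, the limit equation with the global H\"older bound inherited from \eqref{eq:first-global-holder}, and finally the clash between Lemma~\ref{lem:endpoint-liouville} and the normalization $v(0)=0$, $|v(z_\infty)|=1$. Your remarks on fixing $\varepsilon$ only after the universal $\lambda_*(d,\gamma)$ is known, and on using equicontinuity to pass $|v_n(z_n)|=1$ to the limit, make explicit what the paper leaves implicit.
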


\begin{proof}
If \eqref{eq:uniform-little} does not hold, the first blow-up sequence in Section \ref{Section3}
would exist with $\varepsilon$ satisfying \eqref{eq:epsilon-final}.  Lemma
\ref{lem:data-vanish} and Proposition \ref{prop:first-strong} yield, after
extraction,
\[
 v_n\to v\quad\text{locally uniformly and strongly in }
 W^{2,q_c}_{\mathrm{loc}}(\mathbb R^d).
\]
Therefore,
\[
 -\Delta v+\varepsilon^{\gamma-1}|Dv|^\gamma=0\quad\text{in }\R^d.
\]
In particular, the global bound \eqref{eq:first-global-holder} passes to $v$.  Moreover, \eqref{eq:first-normalization} gives, along a subsequence
$z_n\to z$ with $|z|=1$,
\[
 v(0)=0,\qquad |v(z)|=1,
\]
so $v$ is non-constant. Whereas, Lemma \ref{lem:endpoint-liouville} implies that $v$ is
constant.  This contradiction proves \eqref{eq:uniform-little}.
\end{proof}

\section{Maximal regularity: blow-up estimates}\label{sec:maximal-regularity}

In this section, we first establish the gradient estimate by using Theorem \ref{thm:uniform-little}. %

\begin{lemma}
\label{lem:global-interpolation}
Let $a>0$ be arbitrary.  Suppose $u\in W^{2,q_c}(\mathbb T^d)$ and there is $r_0>0$ such that
\begin{equation}\label{eq:local-small-holder}
 \sup_{0<\text{dist}(x,y)\le4r_0}
 \frac{|u(x)-u(y)|}{\text{dist}(x,y)^{\alpha_c}}\le a.
\end{equation}
Then
\begin{equation}\label{eq:global-GN}
 \bigl\||Du|^\gamma\bigr\|_{L^{q_c}(\mathbb T^d)}
 \le C a^{\gamma-1}\|D^2u\|_{L^{q_c}(\mathbb T^d)}
 +C{r_0}^{-\gamma'}a^\gamma,
\end{equation}
where $C=C(d,\gamma)$ is a positive constant.
\end{lemma}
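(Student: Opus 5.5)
The plan is to localize with Lemma~\ref{lem:critical-GN} on balls of radius $r_0$ and then sum over a bounded-overlap cover of the torus.

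\textbf{Cover.} Choose points $x_1,\dots,x_N\in\mathbb T^d$ so that the balls $B_{r_0}(x_j)$ cover $\mathbb T^d$. We also require that each point lies in at most $C_d$ of the balls $B_{r_0}(x_j)$, for instance by taking the $x_j$ on a lattice of mesh comparable to $r_0$. We may assume $4r_0<\iota_0$. If not, we shrink $r_0$; the hypothesis still holds, and the constant $r_0^{-\gamma'}$ only changes by a factor depending on $d$, since $\iota_0$ is a fixed torus constant. Each ball $B_{r_0}(x_j)$ lifts isometrically to a Euclidean ball. Any two of its points are at distance at most $2r_0\le 4r_0$, so hypothesis \eqref{eq:local-small-holder} gives
\[
 [u]_{C^{0,\alpha_c}(B_{r_0}(x_j))}\le a .
\]

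\textbf{Local estimate.} On each ball, Lemma~\ref{lem:critical-GN} gives
\[
 \int_{B_{r_0}(x_j)}|Du|^{\gamma q_c}
 \le C\Bigl(a^{\gamma-1}\|D^2u\|_{L^{q_c}(B_{r_0}(x_j))}+a^\gamma\Bigr)^{q_c}.
\]
The difficulty is that the additive term $a^\gamma$ is not scaled by $r_0$. So summing this bound over the $N\sim r_0^{-d}$ balls produces $N a^{\gamma q_c}$. Taking the $1/q_c$ power and using $d/q_c=\gamma'$, this equals $(r_0^{-d})^{1/q_c}a^\gamma = r_0^{-\gamma'}a^\gamma$, which is exactly the second term in \eqref{eq:global-GN}.

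\textbf{Summation.} Raise the local estimate to the power $q_c$ and use $(s+t)^{q_c}\le 2^{q_c-1}(s^{q_c}+t^{q_c})$. Summing over $j$, with $\sum_j\|D^2u\|^{q_c}_{L^{q_c}(B_{r_0}(x_j))}\le C_d\|D^2u\|^{q_c}_{L^{q_c}(\mathbb T^d)}$ by bounded overlap, gives
\[
 \int_{\mathbb T^d}|Du|^{\gamma q_c}\le C a^{(\gamma-1)q_c}\|D^2u\|_{L^{q_c}}^{q_c}+C N a^{\gamma q_c}.
\]
Take the $1/q_c$ power, use subadditivity of $t\mapsto t^{1/q_c}$ (valid as $q_c>1$), and note $\||Du|^\gamma\|_{L^{q_c}}^{q_c}=\int|Du|^{\gamma q_c}$. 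Together with $N\le C_d r_0^{-d}$ (since $r_0$ is bounded by a torus constant), this yields \eqref{eq:global-GN}.

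The main point to check is the scaling invariance of Lemma~\ref{lem:critical-GN}, which is already built into its proof. Beyond that, the only obstacle is the bookkeeping: the lift of small torus balls to Euclidean balls, and the count $N\lesssim r_0^{-d}$.
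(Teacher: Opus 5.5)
Your argument is correct and is essentially the paper's proof. Both use a bounded-overlap cover with $N\le Cr_0^{-d}$, apply the scale-invariant Lemma~\ref{lem:critical-GN} on each ball, sum the $q_c$-th powers and use $d/q_c=\gamma'$; the only difference is that the paper applies the lemma on the doubled balls $B_{2r_0}(x_j)$ while you apply it directly on $B_{r_0}(x_j)$, which works equally well. One small correction, which the paper shares since it simply assumes $r_0$ is small: shrinking $r_0$ to below $\iota_0/4$ costs only a bounded factor when $r_0$ is bounded above, so your remark that ``the constant only changes by a factor depending on $d$'' fails for arbitrarily large $r_0$. In that regime you would instead absorb the lower-order term via $[u]_{C^{0,\alpha_c}(\mathbb T^d)}\le C\|D^2u\|_{L^{q_c}(\mathbb T^d)}$, though this case never arises in the application.
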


\begin{proof}
Choose $r_0>0$ smaller than one quarter of the radius of
$\mathbb T^d$.  There exist points $x_1,\dots,x_N\in\mathbb T^d$ such that $\mathbb T^d\subset\bigcup_{j=1}^N B_{r_0}(x_j),$
and the family of doubled balls $ \bigl\{B_{2r_0}(x_j)\bigr\}_{j=1}^N$
has overlap bounded by a constant depending only on $d$. The cover is  chosen such that
\begin{equation}\label{eq:cover-cardinality}
 N\le C  r_0^{-d}.
\end{equation}
For $x,y\in B_{2r_0}(x_j)$, one has
$\operatorname{dist}(x,y)\le4r_0$; hence,
\eqref{eq:local-small-holder} implies $[u]_{C^{0,\alpha_c}(B_{2r_0}(x_j))}\le a.$
We apply 
Lemma~\ref{lem:critical-GN} to $u$ on each ball $B_{2r_0}(x_j)$ and obtain
\[
 \|Du\|_{L^{\gamma q_c}(B_{r_0}(x_j))}^{\gamma}
 \le
 C a^{\gamma-1}
 \|D^2u\|_{L^{q_c}(B_{2r_0}(x_j))}
 +C a^{\gamma},
\]
where the same constant $C=C(d,\gamma)$ is used for every $j$ and every
$r_0$.
Raising this inequality to the power $q_c$ and summing in $j$   yield
\[
 \|Du\|_{L^{\gamma q_c}(\mathbb T^d)}^{\gamma q_c}
 \le C a^{(\gamma-1)q_c}
       \|D^2u\|_{L^{q_c}(\mathbb T^d)}^{q_c}
      +CN a^{\gamma q_c}.
\]
Then, we take the $q_c$-th root in the above inequality, use
$d/q_c=\gamma'$ and \eqref{eq:cover-cardinality} to get
\[
 \bigl\||Du|^\gamma\bigr\|_{L^{q_c}(\mathbb T^d)}
 \le C a^{\gamma-1}\|D^2u\|_{L^{q_c}(\mathbb T^d)}
      +C r_0^{-d/q_c}a^\gamma
 =C a^{\gamma-1}\|D^2u\|_{L^{q_c}}
      +C r_0^{-\gamma'}a^\gamma,
\]
which proves \eqref{eq:global-GN} and the coefficient of the
Hessian term is independent of $r_0$.
 \end{proof}

Now, we are ready to prove the main Theorem \ref{thm:main}.
\begin{proof}[Proof of Theorem \ref{thm:main}]
Let
$M=\sup_{f\in\mathcal F}\|f\|_{L^{q_c}}$ given in \eqref{eq:intro-HJ}.  The 
Calder\'on--Zygmund estimate in $\mathbb T^d$ and the zero average of the solution to (\ref{eq:intro-HJ}) give
\begin{equation}\label{eq:global-CZ-final}
 \|u\|_{W^{2,q_c}}
 \le C\|\Delta u\|_{L^{q_c}}
 \le C\left(M+\left\||Du|^\gamma\right\|_{L^{q_c}}\right),
\end{equation}
where $C>0$ is the Calder\'on--Zygmund constant. 
Choose $a>0$ sufficiently small such  that the product of 
$Ca^{\gamma-1}$ in (\ref{eq:global-GN}) of Lemma \ref{lem:global-interpolation} and the
Calder\'on--Zygmund constant in \eqref{eq:global-CZ-final} is less than $1/2$.
Invoking Theorem \ref{thm:uniform-little}, we obtain a  constant $r_0>0$ for which
\eqref{eq:local-small-holder} holds for every $u\in\mathcal U$ solving \eqref{eq:intro-HJ}.  Combining
\eqref{eq:global-GN} and \eqref{eq:global-CZ-final}, and absorbing the
Hessian term, gives a uniform bound for $\|u\|_{W^{2,q_c}}$.  The equation
then gives the same bound for
$\||Du|^\gamma\|_{L^{q_c}}$.  This finishes the  proof of 
\eqref{eq:main-estimate}.
\end{proof}

\section{Application to mean-field games systems at the critical coupling exponent}\label{section5}
In this section, we apply Theorem~\ref{thm:main} to establish the existence of solutions to stationary defocusing mean-field games systems with superquadratic Hamiltonians and critical coupling exponents.   

Recall that the Legendre transform of $H(p)=|p|^\gamma$ is $L(q)=c_\gamma|q|^{\gp}$,
 $c_\gamma=(\gamma-1)\gamma^{-\gp}.$  Define 
\begin{equation}\label{eq:perspective}
 \mathscr L(m,w)=
 \begin{cases}
 mL(-w/m)=c_\gamma |w|^{\gp}m^{1-\gp},&m>0,\\
 0,&(m,w)=(0,0),\\
 +\infty,&m=0,\ w\ne0,
 \end{cases}
\end{equation}
which satisfies the dual representation 
\begin{align}\label{dual_representation}
\mathscr L(m,w)=\sup_{p\in\R^d}\bigl(-w\cdot p-mH(p)\bigr).
\end{align}
Then we give the definition of the variational solution to (\ref{eq:mfg-system}), which is
\begin{definition}
\label{def:solution}
A quadruple $(u,\lambda,m,w)$ is a  variational solution at the endpoint case of
\eqref{eq:mfg-system} if $u\in W^{2,\qc}(\T^d)$ and $|Du|^\gamma\in L^{\qc}(\T^d)$ with $\int_{\T^d}u\,d x=0$ and  $m^{(\bar\alpha+1)/2}\in H^1(\T^d)$, $m\ge0$ with $\int_{\T^d}m\,d x=1$, which
 satisfies
 \begin{align}
 \left\{\begin{array}{ll}
  -\Delta u+|Du|^\gamma+\lambda=m^{\bar\alpha}+V
 \quad&\text{a.e. on }\T^d\\ \Delta m-\nabla\cdot w=0\quad&\text{in }\mathcal D'(\T^d),\\
  w=-mD_pH(Du) \text{ a.e.}, 
 \label{eq:solution-w}
 \end{array}
 \right.
\end{align}
where $d\geq4$, $\bar\alpha=\frac{\gp}{d-2-\gp}$, $\gp=\frac{\gamma}{\gamma-1}<d-2$ and  $V\in C^2(\T^d)$.  Moreover, $(m,w)\in\mathcal A$ is a minimizer to the following objective
\begin{equation}\label{eq:primal-action}
 \J(m,w):=\int_{\T^d}\left[
 \mathscr L(m,w)+\frac{m^{\bar\alpha+1}}{\bar\alpha+1}+Vm
 \right]\,d x,
\end{equation}
where 
\begin{equation}\label{eq:admissible-class}
 \A:=\left\{(m,w)\in L^{\bar\alpha+1}(\T^d)
 \times L^{r_0}(\T^d;\R^d):
 \begin{array}{l}
 m\ge0,\quad \displaystyle\int_{\T^d}m\,d x=1,\\
 \Delta m-\nabla\cdot w=0\quad\text{in }\mathcal D'(\T^d)
 \end{array}\right\}.
\end{equation}
Here, \(r_0:=\frac{(\bar\alpha+1)\gamma'}{\bar\alpha+\gamma'}\) and $\mathcal D'(\mathbb T^d)$ denotes the space of periodic
distributions, namely the continuous dual of
$C^\infty(\mathbb T^d)$.
\end{definition}

We shall employ the variational method to prove the existence of the variational solution to system (\ref{eq:mfg-system}).  First of all, we show that  $\min\limits_{(m,w)\in\mathcal A}\mathcal J(m,w)$ is finite, which is
\begin{lemma} 
\label{prop:minimizer}
Problem $\inf\limits_{(m,w)\in\A}\J(m,w)$ admits a unique minimizer $(m,w)$, where admissible set $\mathcal A$ and $\J$ are defined in (\ref{eq:admissible-class}) and (\ref{eq:primal-action}), respectively.
\end{lemma}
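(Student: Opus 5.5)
We use the direct method of the calculus of variations, followed by a strict-convexity argument for uniqueness.

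\emph{Finiteness.} Since $\mathscr L\ge0$, $V\ge0$ and $m\ge0$, we have $\J\ge0$ on $\A$, so $\inf_{\A}\J\ge 0$. The pair $(m,w)=(1,0)$ belongs to $\A$: it satisfies $\Delta 1-\nabla\cdot 0=0$ and has unit mass. Its action is $\J(1,0)=\frac{1}{\bar\alpha+1}+\int_{\T^d}V\,dx<\infty$, so the infimum is finite.

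\emph{Compactness of a minimizing sequence.} Take $(m_k,w_k)\in\A$ with $\J(m_k,w_k)\to\inf\J$.
\begin{itemize}
\item The term $\frac{m^{\bar\alpha+1}}{\bar\alpha+1}$ bounds $m_k$ in $L^{\bar\alpha+1}$.
\item For $w_k$, write $|w|^{r_0}=\bigl(|w|^{\gp}m^{1-\gp}\bigr)^{r_0/\gp}\,m^{r_0(\gp-1)/\gp}$ on $\{m>0\}$; on $\{m=0\}$ finiteness of $\mathscr L$ forces $w=0$.
\item Apply H\"older with exponents $\gp/r_0$ and its conjugate. The choice $r_0=\frac{(\bar\alpha+1)\gp}{\bar\alpha+\gp}$ is exactly the one making the second factor $m^{\bar\alpha+1}$. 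This gives
\[
\|w_k\|_{L^{r_0}}^{r_0}\le \Bigl(\int\mathscr L(m_k,w_k)\Bigr)^{r_0/\gp}\Bigl(\int m_k^{\bar\alpha+1}\Bigr)^{1-r_0/\gp}\le C .
\]
\item Since $r_0>1$ and $\bar\alpha+1>1$, reflexivity yields, after extraction, $m_k\rightharpoonup m$ in $L^{\bar\alpha+1}$ and $w_k\rightharpoonup w$ in $L^{r_0}$.
\item The constraints $m\ge0$, $\int m=1$ and $\Delta m-\nabla\cdot w=0$ in $\mathcal D'$ are linear and weakly closed (test against $C^\infty$ functions), so $(m,w)\in\A$.
\end{itemize}

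\emph{Lower semicontinuity.} This is the main point to verify carefully. By the dual representation \eqref{dual_representation}, $\mathscr L$ is a supremum of linear functions of $(m,w)$. It is therefore convex, lower semicontinuous and valued in $[0,\infty]$ on $[0,\infty)\times\R^d$. The integral functional $\int\mathscr L(m,w)\,dx$ is then convex and strongly lower semicontinuous on $L^1$ by Fatou. Convexity upgrades this to weak lower semicontinuity, and Mazur's lemma handles the passage from strong to weak convergence. To avoid measurability issues, one can instead write
\[
\int\mathscr L=\sup\Bigl\{\int(-w\cdot\phi-m|\phi|^\gamma)\,dx:\ \phi\in C(\T^d;\R^d)\Bigr\},
\]
using a measurable-selection/density argument, which exhibits it directly as a supremum of weakly continuous functionals. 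The term $\int m^{\bar\alpha+1}$ is convex and continuous, hence weakly lower semicontinuous. The term $\int Vm$ is weakly continuous because $V$ is bounded. Consequently $\J(m,w)\le\liminf_k\J(m_k,w_k)$, and $(m,w)$ is a minimizer.

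\emph{Uniqueness.} Suppose $(m_1,w_1)$ and $(m_2,w_2)$ are two minimizers. The set $\A$ is convex, so their midpoint is admissible. The map $m\mapsto m^{\bar\alpha+1}$ is strictly convex, while $\mathscr L$ and $Vm$ are convex. If $m_1\ne m_2$ on a set of positive measure, the midpoint would have strictly smaller action, a contradiction; hence $m_1=m_2=:m$.

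Next, $\mathscr L(m,\cdot)=c_\gamma m^{1-\gp}|\cdot|^{\gp}$ is strictly convex in $w$ wherever $m>0$, since $\gp>1$. Applying the same midpoint argument gives $w_1=w_2$ a.e. on $\{m>0\}$. On $\{m=0\}$, finiteness of the action forces $w_1=w_2=0$. Therefore the minimizer is unique.
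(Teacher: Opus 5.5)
Your proposal is correct and follows essentially the paper's route: the test pair $(1,0)$, the H\"older bound on $w_j$ in $L^{r_0}$ via the defining identity for $r_0$, weak compactness, weak closedness of the constraints, and weak lower semicontinuity of $\J$ from convexity. You differ in two small ways: you do without the paper's Fokker--Planck $W^{1,r_0}$ estimate and strong $L^1$ convergence (not needed once lower semicontinuity comes from convexity), and your two-step uniqueness argument (strict convexity of $m^{\bar\alpha+1}$ forces $m_1=m_2$, then strict convexity of $\mathscr L(m,\cdot)$ on $\{m>0\}$ together with $w=0$ on $\{m=0\}$) supplies the strict convexity of $\J$ that the paper's one-line appeal to convexity leaves implicit, since $\mathscr L$ alone is positively homogeneous and not strictly convex.
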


\begin{proof}
Using $(1,0)$ as a test, then we have $\min\limits_{(m,w)\in\mathcal A}\J(m,w)<+\infty$. Moreover, let $(m_j,w_j)\in\mathcal A$ be a minimizing
sequence.  Noting that  $V$ is bounded in $\T^d$ and $\int_{\T^d} m_j\,dx=1$, the convexity of $\mathcal J$ implies
\begin{equation}\label{eq:energy-coercivity}
 \sup_j\int_{\T^d}m_j^{\alpha+1}\,d x<\infty,
 \qquad
 \sup_j\int_{\T^d}\frac{|w_j|^{\gp}}{m_j^{\gp-1}}\,d x<\infty.
\end{equation}
By H\"older's inequality and $r_0$ shown in Definition \ref{def:solution}, we have
\begin{align}
 \int_{\T^d}|w_j|^{r_0}\,d x
 &\le
 \left(\int_{\T^d}\frac{|w_j|^{\gp}}{m_j^{\gp-1}}\,d x
 \right)^{r_0/\gp}
 \left(\int_{\T^d}m_j^{\alpha+1}\,d x
 \right)^{(\gp-r_0)/\gp},
 \label{eq:w-r0}
\end{align}
thanks to  $\frac{r_0(\gp-1)}{\gp-r_0}=\bar\alpha+1.$
The standard estimates of Fokker-Planck equations together with
$\int m_j=1$  yield 
\begin{equation}\label{eq:m-W1r0}
 \|m_j\|_{W^{1,r_0}(\T^d)}\le C(1+\|w_j\|_{L^{r_0}(\T^d)}).
\end{equation}
Consequently, after extraction,
\begin{equation}\label{eq:minimizer-convergences}
 m_j\rightharpoonup m\ \text{in }L^{\alpha+1}(\T^d),
 \qquad m_j\to m\ \text{in }L^1(\T^d),
 \qquad w_j\rightharpoonup w\ \text{in }L^{r_0}(\T^d).
\end{equation}
The constraint passes to the limit and $(m,w)\in\A$.  Invoking the convexity and weakly
lower semicontinuous property of $\J$, we find    $(m,w)$ minimizes $\J$.  Moreover, uniqueness of the minimizer is ensured by the convexity of $\J.$
\end{proof}

Next, we discuss the regularity of the minimizer $(m,w)$ obtained in Lemma \ref{prop:minimizer}.  Firs



\begin{lemma} 
\label{prop:translation}
Let $(m,w)$ be the minimizer from Proposition~\ref{prop:minimizer}.  Then
\begin{equation}\label{eq:translation-result}
 \phi:=m^{(\bar\alpha+1)/2}\in H^1(\T^d)
\end{equation}
and
\begin{equation}\label{eq:translation-H1-bound}
 \|D\phi\|_{L^2(\T^d)}^2
 \le C_{\bar\alpha}\|D^2V\|_{L^\infty(\T^d)}.
\end{equation}
Moreover, the following properties hold
\begin{equation}\label{eq:m-critical-integrability}
 m\in L^{\bar\alpha\qc}(\T^d),
 \qquad m^{\bar\alpha}\in L^{\qc}(\T^d).
\end{equation}
\end{lemma}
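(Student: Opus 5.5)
The plan is a Nirenberg-type translation (second difference) argument at the level of the primal problem. It uses only the minimality of $(m,w)$ from Lemma~\ref{prop:minimizer}, the convexity of $\mathscr L$, and the uniform convexity of $F(s):=s^{\bar\alpha+1}/(\bar\alpha+1)$.

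\textbf{Step 1: competitor.} For $h\in\R^d$ set $m_{\pm h}(x):=m(x\pm h)$ and $w_{\pm h}(x):=w(x\pm h)$. Both pairs lie in $\A$: the constraint $\Delta m-\nabla\cdot w=0$ is translation invariant, and so are the mass and the integrability conditions. Since $\A$ is convex, the averaged pair
\[
 \bigl(\bar m_h,\bar w_h\bigr):=\tfrac12\bigl(m_h+m_{-h},\,w_h+w_{-h}\bigr)
\]
also lies in $\A$. Minimality gives $\J(m,w)\le\J(\bar m_h,\bar w_h)$.

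\textbf{Step 2: three-term comparison.} Write $\J(m,w)=\tfrac12\bigl(\J(m_h,w_h)+\J(m_{-h},w_{-h})\bigr)+E_V$. The $\mathscr L$- and $F$-parts of $\J$ are translation invariant, so only the potential term changes, and
\[
 E_V=\int_{\T^d}\Bigl(V(x)-\tfrac12\bigl(V(x-h)+V(x+h)\bigr)\Bigr)m\,dx .
\]
This gives $|E_V|\le\tfrac12|h|^2\|D^2V\|_{L^\infty}$, using $m\ge0$ and $\int m=1$.

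Next, joint convexity of $\mathscr L$ (it is a supremum of linear functions by \eqref{dual_representation}) gives
\[
 \int\mathscr L(\bar m_h,\bar w_h)\le\tfrac12\int\bigl(\mathscr L(m_h,w_h)+\mathscr L(m_{-h},w_{-h})\bigr).
\]
The $V$-term of the competitor is linear in $m$, so it equals $\tfrac12\bigl(\int Vm_h+\int Vm_{-h}\bigr)$ exactly. Combining these with minimality yields
\[
 \int_{\T^d}\Bigl[\tfrac12\bigl(F(m_h)+F(m_{-h})\bigr)-F(\bar m_h)\Bigr]dx\le \tfrac12|h|^2\|D^2V\|_{L^\infty}.
\]

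\textbf{Step 3: the key pointwise inequality, which is the main obstacle.} I need, for $a,b\ge0$ and $p=\bar\alpha+1>1$,
\[
 \frac{a^p+b^p}{2}-\Bigl(\frac{a+b}{2}\Bigr)^p\ge c_p\bigl|a^{p/2}-b^{p/2}\bigr|^2 .
\]
By homogeneity of degree $p$ it suffices to take $b=1$, $a=t\in[0,1]$. Both sides are continuous and vanish only at $t=1$, and near $t=1$ both behave like a constant times $(1-t)^2$. So a positive constant $c_p$ exists by compactness. I would verify the two endpoint behaviours carefully, namely the second-order expansion at $t=1$ and positivity at $t=0$. This yields
\[
 \|\phi(\cdot+h)-\phi(\cdot-h)\|_{L^2}^2\le C_{\bar\alpha}|h|^2\|D^2V\|_{L^\infty},\qquad \phi=m^{(\bar\alpha+1)/2}.
\]
Then the standard difference-quotient characterization of $H^1$ (with increment $2h$) gives $\phi\in H^1(\T^d)$ together with \eqref{eq:translation-H1-bound}.

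\textbf{Step 4: integrability.} Since $\phi\in L^2$ (because $m\in L^{\bar\alpha+1}$) and $D\phi\in L^2$, Sobolev embedding ($d\ge4$) gives $\phi\in L^{2d/(d-2)}$. Hence $m\in L^{(\bar\alpha+1)d/(d-2)}$. With $\bar\alpha=\gp/(d-2-\gp)$ one has $\bar\alpha+1=(d-2)/(d-2-\gp)$, so
\[
 \frac{(\bar\alpha+1)d}{d-2}=\frac{d}{d-2-\gp}=\bar\alpha\cdot\frac{d}{\gp}=\bar\alpha\qc .
\]
This is exactly $m\in L^{\bar\alpha\qc}$, equivalently $m^{\bar\alpha}\in L^{\qc}$, which proves \eqref{eq:m-critical-integrability}.
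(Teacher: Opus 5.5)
Your proposal is correct and follows the paper's proof essentially step by step. The shared steps are: the midpoint of the $\pm h$ translates as competitor; the second difference of $V$ bounded by $\tfrac12|h|^2\|D^2V\|_{L^\infty}$; a convexity deficit controlling $\|\phi(\cdot+h)-\phi(\cdot-h)\|_{L^2}$; the difference-quotient characterization of $H^1$; and Sobolev embedding with $\frac{(\bar\alpha+1)d}{d-2}=\bar\alpha\qc$. Your version is slightly more explicit than the paper's in two ways. You isolate the uniform convexity in the term $m^{\bar\alpha+1}/(\bar\alpha+1)$, with $\mathscr L$ contributing only a nonnegative Jensen deficit. You also justify the elementary inequality $\frac{a^p+b^p}{2}-\bigl(\frac{a+b}{2}\bigr)^p\ge c_p|a^{p/2}-b^{p/2}|^2$, which the paper simply attributes to uniform convexity.
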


\begin{proof}
For $h\in\R^d$, we define
\[
 m_h(x):=m(x+h),\qquad w_h(x):=w(x+h)
\]
and consider the periodic extension.
 Letting  $(m_h,w_h)\in\mathcal A$ and
$(m_{-h},w_{-h})\in\mathcal A$, we define the midpoint as $ (\bar m_h,\bar w_h):=\frac12(m_h+m_{-h},w_h+w_{-h})$.  
Decompose
\[
 \J(m,w)=\J_0(m,w)+\int_{\T^d}Vm\,d x,
 \qquad
 \J_0(m,w):=\int_{\T^d}\left[
 \mathscr L(m,w)+\frac{m^{\bar\alpha+1}}{\bar\alpha+1}\right]\,d x.
\]
By the translation invariance property, we obtain  $ \J_0(m_h,w_h)=\J_0(m_{-h},w_{-h})=\J_0(m,w).$
Then, the minimality of $(m,w)$ implies
\begin{equation}\label{eq:minimality-midpoint}
 \J_0(m,w)-\J_0(\bar m_h,\bar w_h)
 \le\int_{\T^d}V(\bar m_h-m)\,d x.
\end{equation}
Using a change of variables, we further obtain 
\begin{align}
 \int_{\T^d}V(\bar m_h-m)\,d x
 &=\int_{\T^d}m(x)
 \left(\frac{V(x+h)+V(x-h)}2-V(x)\right)\,d x
 \notag\\
 &\le\frac12\|D^2V\|_{L^\infty}|h|^2
 \int_{\T^d}m\,d x
 \le C\|D^2V\|_{L^\infty}|h|^2.
 \label{eq:potential-second-difference}
\end{align}
On the other hand, the uniformly convexity indicates
\begin{align}
 \J_0(m,w)-\J_0(\bar m_h,\bar w_h)
 &=\frac{\J_0(m_h,w_h)+\J_0(m_{-h},w_{-h})}{2}
 -\J_0(\bar m_h,\bar w_h)
 \notag\\
 &\ge c_{\bar\alpha}\int_{\T^d}
 \left|m_h^{(\bar\alpha+1)/2}-m_{-h}^{(\bar\alpha+1)/2}\right|^2\,d x,
 \label{eq:Jensen-deficit}
\end{align}
where $c_{\bar\alpha}>0$ is a constant.
Combining \eqref{eq:minimality-midpoint}--\eqref{eq:Jensen-deficit} yields
\begin{equation}\label{eq:phi-two-h}
 \|\phi(\,\cdot+h)-\phi(\,\cdot-h)\|_{L^2}
 \le C|h|.
\end{equation}
 The difference-quotient characterization of $H^1(\T^d)$ then proves
\eqref{eq:translation-result} and \eqref{eq:translation-H1-bound}.

To show (\ref{eq:m-critical-integrability}), in light of  $d>2$, we have from Lemma ~\ref{prop:translation} and Sobolev's inequality that  $\phi=m^{(\bar\alpha+1)/2}
 \in L^{2d/(d-2)}(\T^d).$
Therefore, $m\in L^{(\bar\alpha+1)d/(d-2)}(\T^d).$
By using \eqref{eq:critical-identities}, we have $\frac{(\bar\alpha+1)d}{d-2}=\bar\alpha q_c$, which proves
\eqref{eq:m-critical-integrability}.
\end{proof}

 \subsection{Existence of the value function}
In this subsection, after obtaining $(m,w)$ by minimization problem $\min_{(m,w)\in\mathcal A}\J$, we show the existence of the value function $u$ solving Hamilton--Jacobi equations.  To begin with, we prove the following preliminary lemma.
 \begin{lemma} 
\label{lem:strong-equi}
Let $1<p<\infty$ and $f_k\to f$  in $L^p(\T^d)$.  Then
$\{f_k:k\in\mathbb N\}\cup\{f\}$ is uniformly equi-integrable in $L^p$.
 
\end{lemma}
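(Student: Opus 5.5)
The plan is to prove directly that the modulus
\[
 \omega(s):=\sup_{g\in\{f_k\}\cup\{f\}}\ \sup_{|E|\le s}\int_E|g|^p\,dx
\]
tends to zero as $s\downarrow0$, in the sense of \eqref{eq:UInt} with $q_c$ replaced by $p$. The argument splits the family into a finite head, which is handled by absolute continuity of the integral, and a tail, which is controlled by the strong convergence.

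Boundedness comes first. Since $f_k\to f$ in $L^p(\mathbb T^d)$, the norms $\|f_k\|_{L^p}$ converge to $\|f\|_{L^p}$. Hence $\sup_k\|f_k\|_{L^p}<\infty$, and adding $f$ keeps the family bounded.

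Next, fix $\epsilon>0$ and pick $N$ with $\|f_k-f\|_{L^p(\mathbb T^d)}\le\epsilon$ for all $k>N$. For any measurable $E$ and $k>N$, Minkowski's inequality on $E$ gives
\[
 \|f_k\|_{L^p(E)}\le \|f\|_{L^p(E)}+\|f_k-f\|_{L^p(E)}\le \|f\|_{L^p(E)}+\epsilon .
\]
A single function $g\in L^p$ satisfies $\int_E|g|^p\to0$ as $|E|\to0$, by absolute continuity of the integral; for instance, use dominated convergence on $\{|g|>M\}$ together with the bound $M^p|E|$ on the rest. Apply this to each of the finitely many functions $f,f_1,\dots,f_N$ to get $s_\epsilon>0$ such that $\|g\|_{L^p(E)}\le\epsilon$ for all these $g$ whenever $|E|\le s_\epsilon$.

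Then, for $|E|\le s_\epsilon$, every member of the family satisfies $\|g\|_{L^p(E)}\le 2\epsilon$, so $\omega(s_\epsilon)\le(2\epsilon)^p$. Since $\omega$ is nondecreasing, this gives $\omega(s)\to0$.

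No step is a real obstacle. The only point needing care is that the tail estimate is uniform in $E$. This holds because the bound $\|f_k-f\|_{L^p(E)}\le\|f_k-f\|_{L^p(\mathbb T^d)}$ does not depend on $E$.
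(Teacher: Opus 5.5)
Your proof is correct and follows essentially the same route as the paper: you control the tail $k>N$ uniformly in $E$ via strong convergence, and the finitely many remaining functions $f,f_1,\dots,f_N$ via absolute continuity of the integral. The only cosmetic differences are that you use Minkowski's inequality on $E$ where the paper uses $|a+b|^p\le 2^{p-1}(|a|^p+|b|^p)$, and that you also check the boundedness of the family explicitly, which the paper leaves implicit.
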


\begin{proof}
Fix $\varepsilon>0$.  Choose $k_0$ such that
$\|f_k-f\|_{L^p}^p<\varepsilon/2^p$ for $k\ge k_0$.  Invoking  the absolute
continuity of the integral, we have there exists $s_0>0$ such that for $|E|\le s_0$,
 $\int_E|f|^p\,d x<\frac\varepsilon{2^p}.$
In addition, for $k\ge k_0$ and $|E|\le s_0$,
\begin{align}\label{togetherwithestimate}
 \int_E|f_k|^p\,d x
 \le 2^{p-1}\|f_k-f\|_{L^p}^p
     +2^{p-1}\int_E|f|^p\,d x
 <\varepsilon.
\end{align}
For each $1\le j<k_0$, the absolute continuity of the integral of
$|f_j|^p$ yields $s_j>0$ such that  for  $|E|\le s_j,$ $\int_E |f_j|^p\,dx<\varepsilon.$
Choosing $\tilde s_0:=\min\{s_0,s_1,\ldots,s_{k_0-1}\}>0,$
we obtain for $|E|\le \tilde s_0,$ $\int_{E}|f_k|^p<\varepsilon$ for every $1\le j<k_0$. Together with the
estimate (\ref{togetherwithestimate}), this proves that
$(f_k)_k$ is uniformly equi-integrable in $L^p(\mathbb T^d)$.
 
\end{proof}

Now, we are ready to show the existence of the value function $u$, which is
\begin{lemma}
\label{prop:fixed-HJ}
Let $f\in L^{\qc}(\T^d)$ be bounded from below.  There exist
$u\in W^{2,\qc}(\T^d)$ and $\lambda\in\R$ such that
\begin{equation}\label{eq:fixed-HJ}
 -\Delta u+|Du|^\gamma+\lambda=f\quad\text{a.e.},
 \qquad \int_{\T^d}u\,d x=0,
\end{equation}
and $|Du|^\gamma\in L^{\qc}(\T^d)$.
\end{lemma}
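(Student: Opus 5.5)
We prove Lemma~\ref{prop:fixed-HJ} by approximation combined with Theorem~\ref{thm:main}. Since $f$ is bounded below, we may replace $f$ by $f+c$; this only shifts $\lambda$, so we assume $f\ge 1$. Next we truncate and mollify: set $f_k:=\min\{f,k\}*\rho_{1/k}$. Then $f_k\in C^\infty(\T^d)$, $f_k\ge1$, and $f_k\to f$ in $L^{\qc}(\T^d)$.

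For smooth data, the classical ergodic theory for superquadratic viscous Hamilton--Jacobi equations on the torus applies. This is Lions' Bernstein method, and alternatively the limit $\delta\to0$ of $\delta u_\delta-\Delta u_\delta+|Du_\delta|^\gamma=f_k$, using gradient bounds that depend on $\|f_k\|_{W^{1,\infty}}$. It provides $u_k\in C^{2,\beta}(\T^d)$ and $\lambda_k\in\R$ with
\[
-\Delta u_k+|Du_k|^\gamma+\lambda_k=f_k,\qquad \int_{\T^d}u_k\,dx=0 .
\]

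\textbf{Uniform bounds on $\lambda_k$.} Integrating the equation over $\T^d$ gives $\lambda_k\le\int f_k\le C$. For the lower bound, evaluate the equation at a minimum point of $u_k$, where $Du_k=0$ and $-\Delta u_k\le0$; this yields $\lambda_k\ge\min f_k\ge 1$. Alternatively, integrating shows $\lambda_k=\int f_k-\int|Du_k|^\gamma$, and the maximum-point argument gives $\lambda_k\ge \min f_k$. Either way $\lambda_k$ is bounded, so after extraction $\lambda_k\to\lambda$.

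\textbf{Uniform $W^{2,\qc}$ bounds.} The sources $\tilde f_k:=f_k-\lambda_k$ converge in $L^{\qc}$ to $f-\lambda$ along the subsequence. By Lemma~\ref{lem:strong-equi}, the family $\{\tilde f_k\}$ is bounded and uniformly equi-integrable in $L^{\qc}$. Since each $u_k$ is a strong zero-average solution of \eqref{eq:intro-HJ} with source $\tilde f_k$, Theorem~\ref{thm:main} gives
\[
\|D^2u_k\|_{L^{\qc}}+\||Du_k|^\gamma\|_{L^{\qc}}\le C
\]
uniformly in $k$. Combined with $\int u_k=0$, the Poincaré inequality yields a uniform $W^{2,\qc}$ bound.

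\textbf{Passage to the limit.} This is the main obstacle, because we must pass to the limit in the nonlinear term. By Rellich, $Du_k\to Du$ strongly in $L^{s}$ for every $s<\qc^*=\gamma\qc$. Hence $|Du_k|^\gamma\to|Du|^\gamma$ in $L^1$, and the term also converges a.e. along a subsequence. Moreover, $u_k\rightharpoonup u$ weakly in $W^{2,\qc}$, so $\Delta u_k\rightharpoonup\Delta u$ weakly in $L^{\qc}$. Passing to the limit in the distributional form of the equation gives $-\Delta u+|Du|^\gamma+\lambda=f$ a.e. Fatou's lemma then gives $|Du|^\gamma\in L^{\qc}$, and $\int u=0$ is preserved.

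The only delicate point is the first step, the existence of smooth ergodic solutions for smooth data. This is standard and can be quoted from the literature, e.g.\ via the Bernstein gradient bound for the discounted problem followed by $\delta\to0$.
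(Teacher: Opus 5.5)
Your proposal is correct and follows essentially the same route as the paper: smooth approximation with a uniform lower bound, bounds on $\lambda_k$ from the minimum point and from integrating the equation, uniform estimates from Lemma~\ref{lem:strong-equi} and Theorem~\ref{thm:main}, and then compactness to pass to the limit. Your use of $Du_k\to Du$ in $L^s$ with $\gamma\le s<\gamma\qc$ to get $|Du_k|^\gamma\to|Du|^\gamma$ in $L^1$ is a clean way to handle the nonlinear term. One small slip: in your ``alternatively'' remark, a maximum point gives $\lambda_k\le\max f_k$, not $\lambda_k\ge\min f_k$, but your minimum-point argument already provides the lower bound you need.
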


\begin{proof}
Choose periodic mollifications $f_k\in C^\infty(\T^d)$ such that $f_k\to f$  in $L^{\qc}(\T^d)$ and $\inf_{\T^d}f_k\ge c_0$,
where $c_0$ is a fixed lower bound for $f$.   In light of \cite[Theorem~1]{CesaroniCirant2019}, we find for every $k$, there exist a
unique ergodic constant $\lambda_k\in\mathbb R$ and a periodic classical
solution $u_k$, unique up to an additive constant, such that
\begin{align}\label{eq:smooth-ergodic-HJ}
-\Delta u_k+|Du_k|^\gamma+\lambda_k=f_k
\qquad\text{in }\mathbb T^d.
\end{align}
We normalize $u_k$ by letting $\int_{\mathbb T^d}u_k\,dx=0.$  At a minimum point of $u_k$, one has $Du_k=0$ and
$\Delta u_k\ge0$; hence $ \lambda_k=f_k+\Delta u_k\ge c_0.$
In addition, integration of \eqref{eq:smooth-ergodic-HJ} gives
\begin{equation}\label{eq:lambda-upper}
 \lambda_k=\int_{\T^d}f_k\,d x
 -\int_{\T^d}|Du_k|^\gamma\,d x
 \le\int_{\T^d}f_k\,d x.
\end{equation}
Thus $(\lambda_k)_k$ is bounded.  Thanks to Lemma~\ref{lem:strong-equi}, the family
$\{f_k-\lambda_k\}$ is bounded and uniformly equi-integrable in
$L^{\qc}$.  Theorem~\ref{thm:main} therefore yields
\begin{equation}\label{eq:HJ-uniform-bound}
 \sup_k\left(
 \|D^2u_k\|_{L^{\qc}(\T^d)}
 +\bigl\||Du_k|^\gamma\bigr\|_{L^{\qc}(\T^d)}
 \right)<+\infty.
\end{equation}
Poincar\'e inequality and the normalization imply a uniform
$W^{2,\qc}$ bound of $(u_k)_k$.  Moreover, by Sobolev embedding theorem, one has   $u_k\rightharpoonup u$ in $W^{2,\qc}(\T^d)$, $u_k\to u$ in $W^{1,s}(\T^d)$ for every $s<\gamma\qc$ and $\lambda_k\to\lambda$ in $\R$.  Thus, we find $Du_k\to Du$ a.e.
  In addition, the sequence $|Du_k|^\gamma$ is bounded in $L^{\qc}(\T^d)$ by
\eqref{eq:HJ-uniform-bound}; since $\qc>1$, we have $|Du_k|^\gamma\rightharpoonup |Du|^\gamma$
in $L^{\qc}(\T^d).$  Passing to the limit in \eqref{eq:smooth-ergodic-HJ} proves
\eqref{eq:fixed-HJ}  and finishes the proof of this lemma.
\end{proof}

\subsection{Linearized problem and duality argument}
\label{sec:duality}
This subsection is devoted the duality argument of the value function $u$ for showing the variational characterization between $u$ and $w$. To begin with, we introduce the linearized problem of $\min\limits_{(m,w)\in\A}\J$.

Set
\begin{equation}\label{eq:linear-spaces}
 s:=\qc'=\frac d{d-\gp},
 \qquad
 r:=\frac d{d+1-\gp},
\end{equation}
so that $1<r<\gp$ and $s=dr/(d-r)$.  Define
\begin{equation}\label{eq:linear-admissible-class}
 \A_{\mathrm{lin}}:=\left\{(m,w)\in L^s(\T^d)
 \times L^r(\T^d;\R^d):
 \begin{array}{l}
 m\ge0,\quad \displaystyle\int_{\T^d}m\,d x=1,\\
 \Delta m-\nabla\cdot w=0\quad\text{in }\mathcal D'(\T^d)
 \end{array}\right\},
\end{equation}
where $\bar\alpha+1>s$ and $r_0>r$ given in (\ref{eq:admissible-class}); hence
$\A\subset\A_{\mathrm{lin}}$.

For $f\in L^{\qc}(\T^d)$, define the linearized functional of $\J$ given in (\ref{eq:primal-action}) as 
\begin{equation}\label{eq:linear-functional}
 \K_f(m,w):=\int_{\T^d}\bigl[\mathscr L(m,w)+fm\bigr]\,d x,
 \qquad (m,w)\in\A_{\mathrm{lin}}.
\end{equation}
We next establish the duality argument for the minimization problem $\inf\limits_{(m,w)\in\mathcal A_{\mathrm{lin}}}\K_f(m,w),$ which is
\begin{proposition} [e.g. Proposition 3.4 in \cite{CesaroniCirant2019GroundStates}] 
\label{prop:linear-duality}
Let $f\in L^{\qc}(\T^d)$ be bounded from below.  Then
\begin{equation}\label{eq:linear-duality}
 \inf_{(m,w)\in\A_{\mathrm{lin}}}\K_f(m,w)
 =\sup\left\{\mu\in\R:
 \begin{array}{l}
 \text{there exists }v\in W^{2,\qc}(\T^d),\ \int_{\T^d}v=0,\\[-1mm]
 -\Delta v+H(Dv)+\mu\le f\quad\text{a.e.}
 \end{array}\right\}.
\end{equation}
Moreover, both extrema in \eqref{eq:linear-duality} are attained.
More precisely, every normalized solution $(u,\lambda)$ of
\eqref{eq:fixed-HJ} is a maximizer of the dual problem, and the primal
problem admits a minimizer. For every primal minimizer $(m,w)$, one has
\begin{equation}\label{eq:feedback-equality}
 w=-mD_pH(Du)
 \qquad\text{a.e. in }\T^d.
\end{equation}

Suppose that $(u,\lambda)$ satisfies \eqref{eq:fixed-HJ}, i.e. it is the optimizer to  the
right-hand side of (\ref{eq:linear-duality}).  Also, assume that the left-hand side of (\ref{eq:linear-duality}) is attained by $(m,w)$.  Then, we have
\begin{equation}
 w=-mD_pH(Du)\quad\text{a.e. in }\T^d.
\end{equation}
\end{proposition}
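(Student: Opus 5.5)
The plan is to prove the duality \eqref{eq:linear-duality} by the standard weak-duality/attainment argument. The strategy is not an abstract Fenchel--Rockafellar theorem. Instead, we exhibit a pair of competitors whose values coincide, which gives equality and attainment at once.

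\textbf{Weak duality.} Take any admissible $(m,w)\in\A_{\mathrm{lin}}$ and any $(v,\mu)$ with $v\in W^{2,\qc}(\T^d)$ and $-\Delta v+H(Dv)+\mu\le f$ a.e.
\begin{itemize}
\item Multiply the inequality by $m\ge0$ and integrate.
\item Use the weak Fokker--Planck constraint $\Delta m=\nabla\cdot w$ tested against $v$. This gives $-\int m\Delta v=\int w\cdot Dv$, since $\int v\Delta m=-\int Dv\cdot Dm$ and $\int v\,\nabla\cdot w=-\int w\cdot Dv$.
\item The integrability has to be checked. We have $m\in L^s$ with $s=\qc'$, $D^2v\in L^{\qc}$, $w\in L^r$, and $Dv\in L^{\gamma\qc}=L^{d(\gamma-1)}$. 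The exponents satisfy $1/r+1/(\gamma\qc)\le1$: one has $1/r=(d+1-\gp)/d$ and $1/(\gamma\qc)=1/(d(\gamma-1))=(\gp-1)/d$, which sum to exactly $1$. The testing is justified by density, mollifying $v$.
\item Together with $\int m=1$ this yields $\mu\le\int fm+\int(w\cdot Dv-mH(Dv))$.
\item By the dual representation \eqref{dual_representation} with $p=Dv$, $w\cdot Dv-mH(Dv)\le\mathscr L(m,w)$ pointwise, with $w\cdot Dv$ taken in place of $-w\cdot p$ at $p=-Dv$. Matching the sign convention $w=-mD_pH$ is the point to check here.
\item Hence $\mu\le\K_f(m,w)$, so $\sup\le\inf$.
\end{itemize}

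\textbf{Equality and attainment.} By Lemma~\ref{prop:fixed-HJ}, which relies on Theorem~\ref{thm:main}, there is $(u,\lambda)$ solving \eqref{eq:fixed-HJ} with $u\in W^{2,\qc}(\T^d)$. It is dual-admissible with $\mu=\lambda$, so $\lambda\le\sup\le\inf$.

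For the reverse inequality we build a primal competitor with $\K_f=\lambda$. Let $\bar m\ge0$ with $\int\bar m=1$ solve the adjoint equation $-\Delta\bar m-\nabla\cdot(\bar m D_pH(Du))=0$, and set $\bar w:=-\bar mD_pH(Du)$. The drift $D_pH(Du)=\gamma|Du|^{\gamma-2}Du$ lies in $L^d$ because $(\gamma-1)d=\gamma\qc$. I would try to get $\bar m$ by regularization:
\begin{itemize}
\item Approximate with the smooth solutions $u_k$ from the proof of Lemma~\ref{prop:fixed-HJ}.
\item Take the corresponding invariant measures $m_k$, which are positive and smooth.
\item Derive estimates uniform in $k$ in $L^s$ for $m_k$ and in $L^r$ for $w_k$. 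These should come from the standard Fokker--Planck estimate with an $L^d$ drift: $m\in W^{1,r}\subset L^s$, with $s=dr/(d-r)$ exactly matching.
\item Pass to the limit using the strong convergence $Du_k\to Du$ in $L^{t}$ for $t<\gamma\qc$.
\end{itemize}

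For the equality case, equality in the Legendre inequality holds exactly when $\bar w=-\bar mD_pH(Du)$. Testing the HJ equation with $\bar m$ then gives $\K_f(\bar m,\bar w)=\lambda$. Hence $\inf\le\lambda$, both extrema are attained, and every normalized solution is a dual maximizer.

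\textbf{Feedback identity.} Let $(m,w)$ be any primal minimizer. Then $\K_f(m,w)=\lambda$, so every inequality in the weak duality chain with $v=u$ is an equality. First, $\int m\,(f-(-\Delta u+H(Du)+\lambda))=0$, which is automatic. Second, and this is the informative part, $\mathscr L(m,w)=w\cdot Du-mH(Du)$ a.e. Since $p\mapsto -w\cdot p-mH(p)$ is strictly concave where $m>0$, the supremum in \eqref{dual_representation} is attained only at the critical point, which forces $w=-mD_pH(Du)$ on $\{m>0\}$. On $\{m=0\}$, finiteness of $\mathscr L$ forces $w=0$, consistent with \eqref{eq:feedback-equality}.

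\textbf{Main obstacle.} The hardest step is the construction of $\bar m$ with uniform bounds for an $L^d$ drift that is not small. The small-drift trick of Lemma~\ref{lem:small-drift} is only local; I would localize and use absolute continuity of $|D_pH(Du)|^d$ to make the drift small on small balls. A secondary technical point is justifying the integration by parts at the critical exponents, where the Hölder pairings are borderline but exact.
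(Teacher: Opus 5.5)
\textbf{Gap in the uniform bounds for $(m_k,w_k)$.} You flag this step as the main obstacle, and it is a genuine gap: the fix you sketch does not close it. You want $\sup_k(\|m_k\|_{L^s}+\|w_k\|_{L^r})<\infty$ from the Fokker--Planck equation with drift $b_k=D_pH(Du_k)$. But the bound $\|m_k\|_{L^s}\le C(1+\|b_k\|_{L^d}\|m_k\|_{L^s})$ only closes by absorption when $\|b_k\|_{L^d}$ is small, which need not hold. Localizing requires smallness of $\|b_k\|_{L^d(B_\rho(x))}$ \emph{uniformly in $k$ and $x$}, i.e.\ uniform equi-integrability of $|Du_k|^{\gamma\qc}$. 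Absolute continuity of $|D_pH(Du)|^d$ concerns only the limit drift and does not control the sequence. Nor does $Du_k\to Du$ in $L^t$, $t<\gamma\qc$. (The needed smallness could be extracted from Theorem~\ref{thm:uniform-little} together with Lemma~\ref{lem:critical-GN}, but the localized estimate with its lower-order terms would still have to be carried out.) Once such bounds are available, the rest of your route is fine: the passage to the limit in $m_kD_pH(Du_k)$ works because $1/s+1/d=1/r<1$ leaves room below the critical exponents, and so does the direct verification $\K_f(\bar m,\bar w)=\lambda$.

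\textbf{The missing idea.} No drift estimate is needed at all. The computation you use for $\bar m$ (Legendre equality plus testing with $u_k$) gives $\K_{f_k}(m_k,w_k)=\lambda_k$ at level $k$. Since $f_k\ge c_0$ and $(\lambda_k)$ is bounded, this yields $\sup_k\int|w_k|^{\gp}m_k^{1-\gp}\,dx<\infty$; this is exactly where the lower bound on $f$ is used. H\"older's inequality then gives $\|w_k\|_{L^r}\le C\|m_k\|_{L^s}^{1/\gamma}$. The drift-free estimate for $\Delta m_k=\nabla\cdot w_k$, combined with $W^{1,r}\hookrightarrow L^s$, gives $\|m_k\|_{L^s}\le C(1+\|m_k\|_{L^s}^{1/\gamma})$, hence uniform bounds since $1/\gamma<1$. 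You also do not need to identify the limit flux. Weak lower semicontinuity of $\K_0$ and $\int(f_k-f)m_k\,dx\to0$ give $\K_f(m_0,w_0)\le\liminf_k\lambda_k=\lambda$. With weak duality this yields equality and attainment, and the feedback form then follows from the Fenchel-equality argument.

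\textbf{Sign error in weak duality.} Your identity $-\int m\Delta v=\int w\cdot Dv$ is correct. However, it yields $\mu\le\int fm-\int(w\cdot Dv+mH(Dv))$, not $+\int w\cdot Dv$, and the Fenchel inequality must be applied at $p=Dv$. With your version, the equality case would read $\mathscr L(m,w)=w\cdot Du-mH(Du)$, attained at $p=-Du$, and would force $w=+mD_pH(Du)$.
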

\begin{proof}
We divide the proof into five steps.

\medskip
\noindent
\textbf{Step 1: weak duality.}
Let $(m,w)\in\A_{\mathrm{lin}}$ and
$(v,\mu)$ be dual pair, namely
\begin{equation}\label{eq:dual-admissibility-detailed}
 v\in W^{2,\qc}(\T^d),
 \qquad
 \int_{\T^d}v\,dx=0,
 \qquad
 -\Delta v+H(Dv)+\mu\le f
 \quad\text{a.e. in }\T^d.
\end{equation}
Invoking (\ref{dual_representation}) and taking $p=Dv$, we get
\begin{equation}\label{eq:Fenchel-pointwise}
 \mathscr L(m,w)
 \ge
 -w\cdot Dv-mH(Dv)
 \qquad\text{a.e. in }\T^d.
\end{equation}
We next verify  the  constraint in the distributional sense. In light of \eqref{eq:linear-spaces}, Sobolev embedding yields  $W^{2,\qc}(\T^d)
 \hookrightarrow
 W^{1,\gamma\qc}(\T^d)
 =
 W^{1,r'}(\T^d)$  with $r'=\frac{r}{r-1}.$
Consequently, $\Delta v\in L^{\qc}(\T^d)$ and  $Dv\in L^{r'}(\T^d).$
Since $m\in L^s=L^{\qc'}(\T^d)$ and $w\in L^r(\T^d)$ with $s$ given in (\ref{eq:linear-spaces}), H\"older's inequality yields
\[
 m\Delta v\in L^1(\T^d),
 \qquad
 w\cdot Dv\in L^1(\T^d).
\]

Choose $v_j\in C^\infty(\T^d)$ such that
$v_j\to v$ in $W^{2,\qc}(\T^d)$. For every $j$, the
constraint equation gives
\[
 \int_{\T^d}
 \bigl(m\Delta v_j+w\cdot Dv_j\bigr)\,dx=0.
\]
Moreover, $ \left|
 \int_{\T^d}m\Delta(v_j-v)\,dx
 \right|
 \le
 \|m\|_{L^s(\T^d)}
 \|\Delta(v_j-v)\|_{L^{\qc}(\T^d)}
 \rightarrow0$
and $\left|
 \int_{\T^d}w\cdot D(v_j-v)\,dx
 \right|
 \le
 \|w\|_{L^r(\T^d)}
 \|D(v_j-v)\|_{L^{r'}(\T^d)}
 \rightarrow0.$
Therefore,
\begin{equation}\label{eq:constraint-test}
 \int_{\T^d}
 \bigl(m\Delta v+w\cdot Dv\bigr)\,dx=0.
\end{equation}
Combining \eqref{eq:Fenchel-pointwise} and
\eqref{eq:constraint-test}, we obtain
\begin{align}
 \K_f(m,w)
 &=
 \int_{\T^d}
 \bigl[\mathscr L(m,w)+fm\bigr]\,dx
 \notag\\
 &\ge
 \int_{\T^d}
 \bigl[-w\cdot Dv-mH(Dv)+fm\bigr]\,dx
 \notag\\
 &=
 \int_{\T^d}
 m\bigl[\Delta v-H(Dv)+f\bigr]\,dx.
 \label{eq:weak-duality-computation}
\end{align}
By using \eqref{eq:dual-admissibility-detailed}, we have
\[
 \Delta v-H(Dv)+f\ge\mu
 \qquad\text{a.e. in }\T^d.
\]
Since $m\ge0$ and $\int_{\T^d}m\,dx=1$, it follows that
\begin{equation}\label{eq:weak-linear-duality}
 \K_f(m,w)\ge
 \mu\int_{\T^d}m\,dx=\mu.
\end{equation}
Taking first the infimum over $(m,w)\in\A_{\mathrm{lin}}$ and then the
supremum over all dual pair $(v,\mu)$ proves weak duality:
\begin{equation}\label{eq:weak-duality-values}
 \sup_{(v,\mu)\in\mathcal D_f}\mu
 \le
 \inf_{(m,w)\in\A_{\mathrm{lin}}}\K_f(m,w),
\end{equation}
where 
\begin{equation}\label{eq:dual-admissible-set}
 \mathcal D_f
 :=
 \left\{
 (v,\mu)\in W^{2,q_c}(\T^d)\times\mathbb R:
 \int_{\T^d}v\,dx=0,\quad
 -\Delta v+H(Dv)+\mu\le f\ \text{a.e.}
 \right\}.
\end{equation}

\medskip
\noindent
\textbf{Step 2: smooth approximation of primal optimizers.}
Let $f_k$ and $(u_k,\lambda_k)$ be the approximations constructed in
Lemma~\ref{prop:fixed-HJ}. Thus, $f_k\to f$
 in $L^{\qc}(\T^d)$, $f_k\ge c_0$, $\lambda_k\to\lambda$,
and
\begin{equation}\label{eq:smooth-ergodic-HJ-duality}
 -\Delta u_k+H(Du_k)+\lambda_k=f_k,
 \qquad
 \int_{\T^d}u_k\,dx=0.
\end{equation}
Set $b_k:=D_pH(Du_k)$ and
let $m_k$ satisfy
\[ -\Delta m_k-\nabla\cdot(m_kb_k)=0,
 \qquad
 m_k>0,
 \qquad
 \int_{\T^d}m_k\,dx=1.\]
 Define $w_k:=-m_kb_k,$ then $\Delta m_k-\nabla\cdot w_k
 =
 \Delta m_k+\nabla\cdot(m_kb_k)=0,$
so $(m_k,w_k)\in\A_{\mathrm{lin}}$.  Since $b_k=D_pH(Du_k)$, we have from Fenchel duality that $L(b_k)
 =
 b_k\cdot Du_k-H(Du_k).$
Since $-w_k/m_k=b_k$, we have
\begin{align}
 \mathscr L(m_k,w_k)
 &=
 m_kL\left(-\frac{w_k}{m_k}\right)=
 m_kL(b_k)=
 m_k\bigl[b_k\cdot Du_k-H(Du_k)\bigr],
 \label{eq:perspective-equality-k}
\end{align}
where $\mathscr L$ is given by (\ref{eq:perspective}).

Testing the constraint for $(m_k,w_k)$ against $u_k$ gives
\[
 \int_{\T^d}
 \bigl(m_k\Delta u_k+w_k\cdot Du_k\bigr)\,dx=0.
\]
Since $w_k=-m_kb_k$, this becomes
\begin{equation}\label{eq:constraint-k-rewritten}
 \int_{\T^d}m_kb_k\cdot Du_k\,dx
 =
 \int_{\T^d}m_k\Delta u_k\,dx.
\end{equation}
Using \eqref{eq:perspective-equality-k},
\eqref{eq:constraint-k-rewritten} and
\eqref{eq:smooth-ergodic-HJ-duality}, we compute
\begin{align}
 \K_{f_k}(m_k,w_k)
 &=
 \int_{\T^d}
 \bigl[
 \mathscr L(m_k,w_k)+f_km_k
 \bigr]\,dx
 \notag\\
 &=
 \int_{\T^d}
 m_k
 \bigl[
 b_k\cdot Du_k-H(Du_k)+f_k
 \bigr]\,dx
 \notag\\
 &=
 \int_{\T^d}
 m_k
 \bigl[
 \Delta u_k-H(Du_k)+f_k
 \bigr]\,dx
 \notag\\
 &=
 \int_{\T^d}\lambda_km_k\,dx
 =
 \lambda_k.
 \label{eq:smooth-primal-value}
\end{align}
For each $k$, define the primal and dual optimal values by
\[
 P_k
 :=
 \inf_{(m,w)\in\A_{\mathrm{lin}}}
 \K_{f_k}(m,w),
 \qquad
 D_k
 :=
 \sup_{(v,\mu)\in\mathcal D_{f_k}}\mu.
\]
Combining \eqref{eq:weak-duality-values} with \eqref{eq:smooth-primal-value}, we further obtain
\[
 \lambda_k
 \le D_k
 \le P_k
 \le \K_{f_k}(m_k,w_k)
 =\lambda_k.
\]
Consequently, $D_k=P_k=\lambda_k $
and we have $(m_k,w_k)$ is a minimizer of the primal problem $\inf\K_f(m,w)$ associated with
$f_k$, while $(u_k,\lambda_k)$ is a maximizer of the
dual problem $\sup \mu$.

\medskip
\noindent
\textbf{Step 3: regularity estimates and compactness of the approximating sequence.}
Noting that $H(p)=|p|^\gamma$, we have  $\mathscr L(m,w)$ becomes
\[
 \mathscr L(m,w)
 =
 c_\gamma\frac{|w|^{\gp}}{m^{\gp-1}}
 \qquad\text{on }\{m>0\},
\]
where $c_\gamma=(\gamma-1)\gamma^{-\gp}>0.$
Thus, by \eqref{eq:smooth-primal-value},
\begin{align}
 c_\gamma
 \int_{\T^d}
 \frac{|w_k|^{\gp}}{m_k^{\gp-1}}\,dx
 &=
 \K_0(m_k,w_k)
 =
 \lambda_k-\int_{\T^d}f_km_k\,dx.
 \label{eq:energy-identity-k}
\end{align}
Since $f_k\ge c_0$ and $\int_{\T^d}m_k\,dx=1$,
\[
 \int_{\T^d}f_km_k\,dx\ge c_0.
\]
Moreover, since $(\lambda_k)_k$ is bounded, we have from (\ref{eq:energy-identity-k}) that
\begin{equation}\label{eq:linear-energy-bound}
 E_k:=
 \int_{\T^d}
 \frac{|w_k|^{\gp}}{m_k^{\gp-1}}\,dx
 \le C,
\end{equation}
where constant $C>0$ independent of $k$.

We next derive the \(L^r\)-bound for $w_k$.   Indeed, applying H\"older's inequality with exponents $\frac{\gp}{r}$ and $\frac{\gp}{\gp-r}$,
we obtain
\begin{align}
 \int_{\T^d}|w_k|^r\,dx
 &\le
 E_k^{r/\gp}
 \left(
 \int_{\T^d}
 m_k^{\,r(\gp-1)/(\gp-r)}\,dx
 \right)^{(\gp-r)/\gp},
 \label{eq:Holder-flux-detailed}
\end{align}
where
\begin{equation}\label{eq:rs-critical-identities}
 \frac{r(\gp-1)}{\gp-r}=s,
 \qquad
 \frac{dr}{d-r}=s.
\end{equation}
Taking the \(r\)-th root in
\eqref{eq:Holder-flux-detailed} and using
\eqref{eq:linear-energy-bound}, we find
\begin{equation}\label{eq:linear-wr-bound}
 \|w_k\|_{L^r}
 \le
 C\|m_k\|_{L^s}^{(\gp-1)/\gp}.
\end{equation}

The standard estimate for $\Delta m_k=\operatorname{div}w_k$
 with $\int_{\T^d}m_k\,dx=1$ gives
\begin{equation}\label{eq:periodic-m-estimate}
 \|m_k\|_{W^{1,r}}
 \le
 C\bigl(1+\|w_k\|_{L^r}\bigr).
\end{equation}
Since \(W^{1,r}(\T^d)\hookrightarrow L^s(\T^d)\) with $r$ and $s$ given in 
\eqref{eq:rs-critical-identities}, we have
\begin{equation}\label{eq:linear-ms-bound-pre}
 \|m_k\|_{L^s}
 \le
 C\bigl(1+\|w_k\|_{L^r}\bigr).
\end{equation}
Collecting \eqref{eq:linear-wr-bound} and
\eqref{eq:linear-ms-bound-pre}, we obtain $\sup_k\|m_k\|_{L^s}<\infty.$
Combining \eqref{eq:linear-wr-bound} with
\eqref{eq:periodic-m-estimate}, we conclude that
\begin{equation}\label{eq:linear-compactness-bound}
 \sup_k
 \left(
 \|m_k\|_{L^s}
 +\|m_k\|_{W^{1,r}}
 +\|w_k\|_{L^r}
 \right)<\infty.
\end{equation}
In light of (\ref{eq:linear-compactness-bound}), after passing to a
subsequence, there exist \(m_0\) and \(w_0\) such that
\begin{align}
 m_k&\rightharpoonup m_0
 &&\text{weakly in }W^{1,r}(\T^d),
 \label{eq:mk-weak-W1r}\\
 m_k&\rightharpoonup m_0
 &&\text{weakly in }L^s(\T^d),
 \label{eq:mk-weak-Ls}\\
 w_k&\rightharpoonup w_0
 &&\text{weakly in }L^r(\T^d).
 \label{eq:wk-weak-Lr}
\end{align}
Since \(W^{1,r}(\T^d)\) is compactly embedded into \(L^1(\T^d)\),
we obtain
\begin{equation}\label{eq:mk-strong-L1}
 m_k\to m_0
 \qquad\text{ in }L^1(\T^d)
 \quad\text{and a.e. in }\T^d.
\end{equation}
In particular,
\[
 m_0\ge0,
 \qquad
 \int_{\T^d}m_0\,dx
 =
 \lim_{k\to\infty}\int_{\T^d}m_k\,dx=1.
\]

We next verify the constraint equation.  For every \(\varphi\in C^\infty(\T^d)\), we have
\begin{align}\label{smooth_constraint}
 \int_{\T^d}
 \bigl(m_k\Delta\varphi+w_k\cdot D\varphi\bigr)\,dx=0.
\end{align}
Passing to the limit by
\eqref{eq:mk-weak-Ls} and \eqref{eq:wk-weak-Lr} in (\ref{smooth_constraint}), we obtain
\[
 \int_{\T^d}
 \bigl(m_0\Delta\varphi+w_0\cdot D\varphi\bigr)\,dx=0.
\]
Thus, $\Delta m_0-\nabla\cdot w_0=0$ in $\mathcal D'(\T^d).$  In summary, we obtain  $(m_0,w_0)\in\A_{\mathrm{lin}}.$

\medskip
\noindent
\textbf{Step 4: passage to the limit and strong duality.}
First, since \(f_k\to f\)  in \(L^{\qc}\) and
\((m_k)_k\) is bounded in \(L^{\qc'}(\T^d)\), we have
\begin{align}
 \left|
 \int_{\T^d}(f_k-f)m_k\,dx
 \right|
 &\le
 \|f_k-f\|_{L^{\qc}(\T^d)}
 \|m_k\|_{L^s(\T^d)}
 \rightarrow0.
 \label{eq:linear-source-error}
\end{align}
Moreover, by the weak convergence of \(m_k\) in \(L^s\), we have from \eqref{eq:linear-source-error}  that
\begin{equation}\label{eq:fixed-source-weak-limit}
 \int_{\T^d}fm_k\,dx
 \longrightarrow
 \int_{\T^d}fm_0\,dx.
\end{equation}
By using the weakly lower
semi-continuous property of $\K_0$ given in (\ref{eq:linear-functional}) with $f=0$, we further obtain
\begin{equation}\label{eq:perspective-lsc}
 \K_0(m_0,w_0)
 \le
 \liminf_{k\to\infty}\K_0(m_k,w_k).
\end{equation}
Combining \eqref{eq:fixed-source-weak-limit} and
\eqref{eq:perspective-lsc}, we obtain
\begin{align}
 \K_f(m_0,w_0)
 &=
 \K_0(m_0,w_0)
 +\int_{\T^d}fm_0\,dx
 \notag\\
 &\le
 \liminf_{k\to\infty}
 \left[
 \K_0(m_k,w_k)
 +\int_{\T^d}fm_k\,dx
 \right]
 \notag\\
 &=
 \liminf_{k\to\infty}
 \left[
 \K_{f_k}(m_k,w_k)
 -
 \int_{\T^d}(f_k-f)m_k\,dx
 \right]
 \notag\\
 &=
 \liminf_{k\to\infty}\lambda_k
 =
 \lambda.
 \label{eq:critical-primal-upper-bound}
\end{align}
Since \((m_0,w_0)\in\A_{\mathrm{lin}}\), one has

 \begin{align}\label{combine1_inequality}\inf_{(m,w)\in\A_{\mathrm{lin}}}\K_f(m,w)
 \le
 \K_f(m_0,w_0)
 \le\lambda.
\end{align}
On the other hand, as shown in Lemma \ref{prop:fixed-HJ}, we get \((u,\lambda)\) solves
\[
 -\Delta u+H(Du)+\lambda=f,
\]
so $(u,\lambda)\in\mathcal D_f$ given in (\ref{eq:dual-admissible-set}). Weak duality (\ref{eq:weak-duality-values}) therefore gives
\begin{align}\label{combine2_inequality}
 \lambda
 \le
 \sup_{(v,\mu)\in\mathcal D_f}\mu
 \le
 \inf_{(m,w)\in\A_{\mathrm{lin}}}\K_f(m,w).
\end{align}
Combining  (\ref{combine1_inequality}) with  (\ref{combine2_inequality}) yields
\begin{equation}\label{eq:all-dual-values-equal}
 \lambda
 =
 \sup_{(v,\mu)\in\mathcal D_f}\mu
 =
 \inf_{(m,w)\in\A_{\mathrm{lin}}}\K_f(m,w)
 =
 \K_f(m_0,w_0).
\end{equation}
Thus, \((u,\lambda)\) is a dual optimizer and
\((m_0,w_0)\) is a primal optimizer.

\medskip
\noindent
\textbf{Step 5:  relation between $m$ and $w$.}
Let \((m,w)\) be any primal minimizer of $\min\limits_{(m,w)\in\mathcal A}\J$. By using
\eqref{eq:all-dual-values-equal}, one has $\K_f(m,w)=\lambda.$
Since \(\int_{\T^d}m\,dx=1\) and $f=-\Delta u+H(Du)+\lambda,$
we have
\begin{align}
0
&=
\K_f(m,w)-\lambda
=
\int_{\T^d}
\bigl[
\mathscr L(m,w)+fm-\lambda m
\bigr]\,dx
\notag\\
&=
\int_{\T^d}
\bigl[
\mathscr L(m,w)-m\Delta u+mH(Du)
\bigr]\,dx,
\label{eq:duality-gap-before-constraint}
\end{align}
where $\mathscr L$ is given in (\ref{eq:perspective}).
Testing the constraint equation
against \(u\) gives
\[
 \int_{\T^d}
 \bigl(m\Delta u+w\cdot Du\bigr)\,dx=0.
\]
Substituting this identity into
\eqref{eq:duality-gap-before-constraint}, we obtain
\begin{equation}\label{eq:integrated-Fenchel-gap}
 \int_{\T^d}
 \bigl[
 \mathscr L(m,w)+w\cdot Du+mH(Du)
 \bigr]\,dx=0.
\end{equation}
Noting that the pointwise Fenchel inequality gives
 $\mathscr L(m,w)+w\cdot Du+mH(Du)\ge0$
a.e. in $\T^d,$
where
\[
 \mathscr L(m,w)\in L^1,
 \qquad
 w\cdot Du\in L^1,
 \qquad
 mH(Du)\in L^1.
\]
It then follows from (\ref{eq:integrated-Fenchel-gap}) that
\begin{equation}\label{eq:pointwise-Fenchel-equality-detailed}
 \mathscr L(m,w)
 =
 -w\cdot Du-mH(Du)
 \qquad\text{a.e. on }\T^d.
\end{equation}
Recall that on the set \(\{m>0\}\), $\mathscr L(m,w)
 =
 \sup_{p\in\mathbb R^d}
 \bigl\{-w\cdot p-mH(p)\bigr\}.$
\eqref{eq:pointwise-Fenchel-equality-detailed} yields that the
supremum is attained at \(p=Du\). Since \(H\) is differentiable and
strictly convex, the first-order optimality condition is
\[
 -w-mD_pH(Du)=0,
\]
which implies
\begin{equation}\label{eq:feedback-on-positive-set}
 w=-mD_pH(Du)
 \qquad\text{a.e. on }\{m>0\}.
\end{equation}

On the set \(\{m=0\}\),  we have from  definition (\ref{eq:perspective}) that 
\[
 \mathscr L(0,w)
 =
 \begin{cases}
 0,&w=0,\\
 +\infty,&w\ne0.
 \end{cases}
\]
Since $\int_{(m,w)\in\A}\J$ is finite, we have \(\mathscr L(m,w)<\infty\) a.e. Therefore, $w=0$
a.e. on $\{m=0\}.$
Together with \eqref{eq:feedback-on-positive-set}, this proves
\begin{equation}
 w=-mD_pH(Du)
 \qquad\text{a.e. on }\T^d.
\end{equation}
    Noting that  \(H(p)=|p|^\gamma\), we obtain the following  relation 
\[
 w=-\gamma m|Du|^{\gamma-2}Du.
\]
This completes the proof.
\end{proof}
We next show that the minimizer of $\J$ is also the minimizer of the linearized problem $\mathcal K_f,$ which is
\begin{proposition}
\label{prop:linearized-optimality}
Let $(m,w)$ minimize $\J$, and set $f:=m^{\bar\alpha}+V.$
Then $(m,w)$ minimizes $\K_f$ over $\A_{\mathrm{lin}}$, where $\A_{\mathrm{lin}}$ is given in (\ref{eq:linear-admissible-class}).
\end{proposition}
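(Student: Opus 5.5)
We will prove the claim by a convexity (first variation) argument: since $\J$ is convex and $\K_f$ is exactly its linearization at the minimizer $(m,w)$, minimality of $(m,w)$ for $\J$ forces minimality for $\K_f$. First we check that $(m,w)\in\A_{\mathrm{lin}}$; this follows from $\A\subset\A_{\mathrm{lin}}$, already noted after \eqref{eq:linear-admissible-class}. We also check that $f=m^{\bar\alpha}+V\in L^{\qc}(\T^d)$ is bounded from below, which follows from Lemma~\ref{prop:translation} together with $V\ge0$, $V\in C^2$. Then $\K_f$ is well defined on $\A_{\mathrm{lin}}$, because $m'\in L^s=L^{\qc'}$ for every $(m',w')\in\A_{\mathrm{lin}}$.

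Next we reduce to competitors in $\A$. Fix $(m_1,w_1)\in\A_{\mathrm{lin}}$ with $\K_f(m_1,w_1)<\infty$. If $m_1\notin L^{\bar\alpha+1}$ or $w_1\notin L^{r_0}$, we cannot test $\J$ directly. So we first prove $\K_f(m,w)\le \K_f(m_1,w_1)$ for $(m_1,w_1)\in\A$, and then extend by density. For the density step we approximate a general $(m_1,w_1)\in\A_{\mathrm{lin}}$ by the mollified pairs $(m_1*\rho_\delta,w_1*\rho_\delta)$. These are smooth, stay in $\A$ because the constraint is linear and translation invariant, and preserve $\int m=1$ and $m\ge0$. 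Convergence of $\K_f$ along this approximation holds for two reasons. The term $\int f m_1*\rho_\delta\to\int fm_1$ because $m_1*\rho_\delta\to m_1$ in $L^{\qc'}$. The term $\int\mathscr L(m_1*\rho_\delta,w_1*\rho_\delta)\le\int \mathscr L(m_1,w_1)$ by Jensen's inequality, since $\mathscr L$ is jointly convex and $1$-homogeneous. Lower semicontinuity then gives convergence of this term.

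The main step is the variation inside $\A$. For $(m_1,w_1)\in\A$ and $t\in(0,1)$, set $(m_t,w_t)=(1-t)(m,w)+t(m_1,w_1)\in\A$, using convexity of $\A$. By convexity of $\mathscr L$,
\[
\int\mathscr L(m_t,w_t)\le (1-t)\int\mathscr L(m,w)+t\int\mathscr L(m_1,w_1).
\]
By convexity of $\sigma\mapsto\sigma^{\bar\alpha+1}/(\bar\alpha+1)$, whose derivative is $\sigma^{\bar\alpha}$,
\[
\int \frac{m_t^{\bar\alpha+1}-m^{\bar\alpha+1}}{\bar\alpha+1}=t\int m^{\bar\alpha}(m_1-m)+o(t).
\]
The $o(t)$ is justified by dominated convergence: the difference quotients are monotone in $t$ and are bounded by $\frac{m_1^{\bar\alpha+1}-m^{\bar\alpha+1}}{\bar\alpha+1}\in L^1$. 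Minimality $\J(m,w)\le\J(m_t,w_t)$, divided by $t$ with $t\downarrow0$, gives
\[
0\le \int\mathscr L(m_1,w_1)-\int\mathscr L(m,w)+\int (m^{\bar\alpha}+V)(m_1-m),
\]
that is, $\K_f(m,w)\le\K_f(m_1,w_1)$. The pairing $\int m^{\bar\alpha}m_1$ is finite by H\"older's inequality with $m^{\bar\alpha}\in L^{(\bar\alpha+1)/\bar\alpha}$.

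The expected obstacle is the density step from $\A$ to $\A_{\mathrm{lin}}$, specifically showing that the mollified competitors are admissible and that $\K_f$ passes to the limit. The Jensen and lower semicontinuity argument above should handle it. If it fails, the fallback is to note that Proposition~\ref{prop:linear-duality} gives $\inf_{\A_{\mathrm{lin}}}\K_f=\lambda$, which is attained by the smooth-approximation limit. It would then suffice to show $\K_f(m,w)\le\lambda$, by comparing against the smooth approximate primal pairs $(m_k,w_k)$ built in Step 2 of that proof. These pairs are bounded, hence lie in $\A$, and $\K_f(m_k,w_k)\to\lambda$ along the approximation.
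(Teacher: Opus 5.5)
Your proposal is correct and follows essentially the paper's argument. A first variation along convex combinations inside $\A$ (convexity of $\mathscr L$ plus differentiation of the power term) gives $\K_f(m,w)\le\K_f(m_1,w_1)$, and mollification, together with Jensen's inequality for $\mathscr L$ and $L^{s}$-convergence of the linear term, extends this to all of $\A_{\mathrm{lin}}$. The only differences are cosmetic: you vary directly against arbitrary pairs in $\A$, justified by monotone difference quotients, rather than only smooth strictly positive ones, so you do not need the $\varepsilon$-shift the paper adds in its mollification, and the fallback via Proposition~\ref{prop:linear-duality} is unnecessary.
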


\begin{proof}
First of all, we fix a smooth pair
$(\widetilde m,\widetilde w)\in\A_{\mathrm{lin}}$ with
$\widetilde m>0$. It is straightforward to see $(\widetilde m,\widetilde w)\in\A$.  Define $(m_t,w_t):=(1-t)(m,w)+t(\widetilde m,\widetilde w)$, $0<t<1$.
Let
\[
 \K_0(m,w):=\int_{\T^d}\mathscr L(m,w)\,d x,~ \mathcal G(m):=
 \int_{\T^d}\left(\frac{m^{\bar\alpha+1}}{\bar\alpha+1}+Vm\right)\,d x,
\]
{ 
then we have 
\[\J(m,w)=\K_0(m,w)+\mathcal G(m).
\]
Since $(m_t,w_t)\in\A$, the minimality of $(m,w)$ yields
\begin{equation}\label{eq:minimality-linearized-path}
 0\le \J(m_t,w_t)-\J(m,w).
\end{equation}
 We  use the convexity of $\K_0$ to get
\[
 \K_0(m_t,w_t)
 \le (1-t)\K_0(m,w)
      +t\K_0(\widetilde m,\widetilde w).
\]
Combining this inequality with
\eqref{eq:minimality-linearized-path} and dividing by $t>0$, we obtain
\begin{equation}\label{eq:convex-linearized-inequality}
 0\le
 \K_0(\widetilde m,\widetilde w)-\K_0(m,w)
 +\frac{\mathcal G(m_t)-\mathcal G(m)}{t}.
\end{equation}
Noting that $\bar\alpha+1>1$, we have
\[
 \lim_{t\downarrow0}
 \frac{\mathcal G(m_t)-\mathcal G(m)}{t}
 =\int_{\T^d}(m^\alpha+V)(\widetilde m-m)\,d x.
\]
Thus, letting $t\downarrow0$ in
\eqref{eq:convex-linearized-inequality}, we conclude that
\[
 0\le
 \K_0(\widetilde m,\widetilde w)-\K_0(m,w)
 +\int_{\T^d}(m^\alpha+V)(\widetilde m-m)\,d x,
\]
which implies
\begin{align}\label{smooth_Kf_min}
\K_f(m,w)\le\K_f(\widetilde m,\widetilde w).
\end{align}
}

It remains to show that the inequality holds for  an arbitrary
$(\widetilde m,\widetilde w)\in\A_{\mathrm{lin}}$ with finite
$\K_f$.  Let $\rho_\varepsilon$ be a periodic mollifier and define
\[
 \widetilde m_\varepsilon
 :=\frac{\rho_\varepsilon*\widetilde m+\varepsilon}{1+\varepsilon},
 \qquad
 \widetilde w_\varepsilon
 :=\frac{\rho_\varepsilon*\widetilde w}{1+\varepsilon}.
\]
Then $(\widetilde m_\varepsilon,\widetilde w_\varepsilon)$ is smooth,
strictly positive, satisfying the linear constraint and $\int_{\T^d} \tilde m_{\varepsilon}\,dx=1$.
Convexity of $\K_0$ and Jensen's inequality indicate
\begin{align}\label{jesen_smooth_competitor}
\limsup_{\varepsilon\downarrow0}
 \int_{\T^d}\mathscr L(\widetilde m_\varepsilon,
 \widetilde w_\varepsilon)\,d x
 \le \int_{\T^d}\mathscr L(\widetilde m,\widetilde w)\,d x.
\end{align}
Moreover, $\widetilde m_\varepsilon\to\widetilde m$  in
$L^s(\T^d)$ with $s$ given in (\ref{eq:linear-spaces}).  Thanks to H\"older's inequality, one finds
\begin{align}\label{linear_convergence_competitor}
\int_{\T^d} f\widetilde m_\varepsilon\,dx\to\int_{\T^d} f\widetilde m\,dx.  
\end{align}
For every $\varepsilon>0$, the pair
$(\widetilde m_\varepsilon,\widetilde w_\varepsilon)$ is a smooth pair. Hence
\eqref{smooth_Kf_min}  yields $\K_f(m,w)
 \le
 \K_f(\widetilde m_\varepsilon,\widetilde w_\varepsilon).$
Taking the lower and upper limits as $\varepsilon\downarrow0$, we obtain
\[
\begin{aligned}
 \K_f(m,w)
 &\le
 \liminf_{\varepsilon\downarrow0}
 \K_f(\widetilde m_\varepsilon,\widetilde w_\varepsilon)\\
 &\le
 \limsup_{\varepsilon\downarrow0}
 \K_f(\widetilde m_\varepsilon,\widetilde w_\varepsilon)
 \le
 \K_f(\widetilde m,\widetilde w),
\end{aligned}
\]
where the last inequality holds due to (\ref{jesen_smooth_competitor}) and (\ref{linear_convergence_competitor}).
Since $(\widetilde m,\widetilde w)$ is arbitrary, we have $(m,w)$ minimizes
$\K_f$ over $\A_{\mathrm{lin}}$.
\end{proof}

\subsection{Existence for mean-field games with endpoint coupling exponents}
In this section, we finish the proof of Theorem \ref{thm:main2} for the existence of solutions to mean-field games systems with the endpoint coupling exponent.
\begin{proof}[Proof of Theorem~\ref{thm:main2}]
Let $(m,w)$ be the unique minimizer of $\J$ shown in 
Lemma~\ref{prop:minimizer}.  Then, Lemma~\ref{prop:translation} implies  $m^{(\bar\alpha+1)/2}\in H^1(\T^d)$ and
$m^{\bar\alpha}\in L^{\qc}(\T^d).$  In light of $m^\alpha\ge0$ and $V\in C^2(\T^d)$, the source
$f:=m^{\bar\alpha}+V\in L^{\qc}$, which is bounded from below.
Lemma~\ref{prop:fixed-HJ} therefore concludes that there is a normalized pair
$(u,\lambda)$ satisfying \eqref{eq:fixed-HJ}.

In light of Proposition~\ref{prop:linearized-optimality}, we find $(m,w)$ minimizes the
linearized problem with source $f$.    Then, we apply Proposition~\ref{prop:linear-duality}
to $(u,\lambda)$ and obtain  $w=-mD_pH(Du)=-m\gamma|Du|^{\gamma-2}Du.$  By 
substituting it into the constraint equation
$\Delta m-\nabla\cdot w=0$ yields $-\Delta m-\nabla\cdot\!\left(m\gamma|Du|^{\gamma-2}Du\right)=0$ in $\mathcal D'(\T^d)$.
Thus all equations and normalizations in Definition~\ref{def:solution} hold.

To complete the proof that \((u,\lambda,m,w)\) is a variational solution of \eqref{eq:mfg-system} as shown in Definition \ref{def:solution}, it remains to verify the integrability conditions.  Since
$H(p)=|p|^\gamma$ and $\gamma\qc=d(\gamma-1)$, we have
\begin{align}
 \|D_pH(Du)\|_{L^d}^d
 &=\gamma^d\int_{\T^d}|Du|^{d(\gamma-1)}\,d x
 \notag\\
 &=\gamma^d\int_{\T^d}\bigl(|Du|^\gamma\bigr)^{\qc}\,d x<\infty.
 \label{eq:feedback-Ld}
\end{align}
Moreover, Lemma~\ref{prop:translation} indicates $m\in L^{\alpha\qc}(\T^d)
 =L^{d/(d-2-\gp)}(\T^d).$
H\"older's inequality and \eqref{eq:feedback-Ld} therefore yield
\begin{equation}\label{eq:flux-improved}
 w=-mD_pH(Du)\in L^{r_*}(\T^d),
 \qquad
 \frac1{r_*}=\frac{d-2-\gp}{d}+\frac1d
 =\frac{d-1-\gp}{d}.
\end{equation}
  Uniqueness of $(m,w)$ was proved in
Lemma~\ref{prop:minimizer}.  Now, we have $(u,\lambda,m,w)$ is a variational solution as shown in Definition \ref{def:solution}.
\end{proof}

\section*{Acknowledgments}

 F.~Kong would like to thank Professor Alessandro Goffi for helpful discussions
and for bringing several relevant references to his attention.


\bibliographystyle{plain}
\bibliography{ref}
\end{document}